\renewcommand\section{\@startsection{section}{1}{\z@}%
    {.7\linespacing\@plus\linespacing}%
    {.5\linespacing}%
    {\centering \scshape  \bf}}
\theoremstyle{plain}                            
\newtheorem{lemma}{Lemma}[section]
\newtheorem{theorem}[lemma]{Theorem}
\newtheorem{proposition}[lemma]{Proposition}
\newtheorem{corollary}[lemma]{Corollary}
\theoremstyle{definition}                        
\newtheorem{definition}[lemma]{Definition}
\newtheorem{example}[lemma]{Example}
\newtheorem*{assumption1}{Assumption A1}
\newtheorem*{assumption2}{Assumption A2}
\theoremstyle{remark}            
\newtheorem{remark}[lemma]{Remark}
\begin{document}
\title[Implicit iteration methods in Hilbert scales]
{Implicit iteration methods in Hilbert scales under general
smoothness conditions}
\author{Qinian Jin}
\author{Ulrich Tautenhahn}
\address{Q. Jin,
Department of Mathematics, Virginia Tech Blacksburg, 522 McBryde
Hall, VA 24061-0123, U.S.A.} \email{qnjin@math.vt.edu}
\address{U. Tautenhahn, Department of Mathematics, University of
Applied Sciences Zittau/G\"orlitz, P.O.Box 1455, 02755 Zittau,
Germany} \email{u.tautenhahn@hs-zigr.de}
%
\date{\today}
\keywords{Ill-posed problems, inverse problems, regularization,
Hilbert scales, implicit iteration method, order optimal error
bounds, general smoothness conditions, operator monotone functions}
\subjclass[2000]{47A52, 65F22, 65J20, 65M30}
\begin{abstract}
For solving linear ill-posed problems regularization methods are
required when the right hand side is with some noise. In the present
paper regularized solutions are obtained by implicit iteration
methods in Hilbert scales.
By exploiting operator monotonicity of certain functions and
interpolation techniques in variable Hilbert scales, we study these
methods under general smoothness conditions. Order optimal error
bounds are given in case the regularization parameter is chosen
either {\it a priori} or {\it a posteriori} by the discrepancy
principle. For realizing the discrepancy principle, some fast
algorithm is proposed which is based on Newton's method applied to
some properly transformed equations.
\end{abstract}
\maketitle

%
\section{Introduction}\label{s1}
In this paper we are interested in solving ill-posed problems
\begin{equation}\label{f11}
A x=y,
\end{equation}
where $A  \in {\mathcal L}(X,Y)$ is a linear, injective and bounded
operator with non-closed range ${\mathcal R} (A)$ and $X, Y$ are
Hilbert spaces with corresponding inner products $(\cdot, \cdot)$
and norms $\| \cdot \|$. Throughout we assume that $y \in {\mathcal
R} (A)$ so that (\ref{f11}) has a unique solution $x^\dagger \in X$.
We further assume that $y$ is unknown and $y^\delta \in Y$ is the
available noisy right hand side with
\begin{equation*}
\|y-y^\delta\|\le \delta .
\end{equation*}

In recent literature many aspects of treating ill-posed problems
with noisy right hand side have been studied. For an overview, see,
e.\,g., the textbooks \cite{ehn96,vv86}.
The numerical treatment of ill-posed problems (\ref{f11}) with noisy
data $y^\delta$ requires the application of special regularization
methods. In this paper we study {\it implicit iteration methods in
Hilbert scales}, in which regularized solutions $x_n^\delta$ are
obtained by
\begin{equation}\label{f12}
x_k^\delta =  x_{k-1}^\delta - (A^*A + \alpha_k B^{2s} )^{-1} A^* (
 A x_{k-1}^\delta - y^\delta ), \quad k=1,2,...,n, \quad x_0^\delta =
x_0
\end{equation}
where $B: {\mathcal D} (B) \subset X \to X$ is some unbounded
densely defined self-adjoint strictly positive definite operator,
$\alpha_k > 0$ are properly chosen real numbers, $s$ is some
generally nonnegative number that controls the smoothness to be
introduced into the regularization procedure and $x_0$ is some
properly chosen starting value. In these regularization methods, the
positive number
\begin{equation}\label{f13}
\sigma_n := \sum_{k=1}^n \frac{1}{\alpha_k}
\end{equation}
plays the role of the regularization parameter.
For results on convergence rates of this method in the special case
$s=0$ we recommend the paper \cite{hg98}, and for some extensions to
the nonlinear case we recommend \cite{ha10,ji10} and
\cite{hho09,lr09,pb10}.

Method (\ref{f12}) with $n=1$ and $x_0 =0$ is the method of Tikhonov
regularization in Hilbert scales which has been studied by Natterer
\cite{na84}. From this paper we know that under the assumptions $\|
B^{-a}x\| \sim \|A x\|$ and $\| B^p x^\dagger\| \le E$ the Tikhonov
regularized solution $x_{\alpha}^{\delta}$ of problem (\ref{f12})
with $n=1$, $x_0 = 0$ and $\alpha_1 = \alpha$  guarantees order
optimal error bounds
\begin{equation}\label{f14}
\| x_{\alpha}^{\delta} - x^\dagger\| = O(\delta^{p/(a+p)}) \quad
\mbox{for} \quad p \le 2s+a
\end{equation}
in case $\alpha$ is chosen {\it a priori} by $\alpha \sim
\delta^{2(a+s)/(a+p)}$.
This result has been extended
\begin{enumerate}
\item[(i)] to the case of choosing $\alpha$ {\it a posteriori} by the
discrepancy principle, see, e.\,g., \cite{ne88,st94,ta93,ta96-1},
\item[(ii)] to a general regularization scheme,  see, e.\,g.,
\cite{ehn96,ta96-1},
\item[(iii)] to the case of
general source conditions including infinitely smoothing operators
$A$, see, e.\,g., \cite{he95,ma94,mt06,mt07,npt05},
\item[(iv)] to the case of nonlinear ill-posed problems, see,
e.\,g., \cite{ehn96,hp08,ln97,ne92,st94,ta98-1}.
\end{enumerate}

Our paper is organized as follows.
In Section 2 we collect some preliminaries on properties of the
implicit iteration methods in Hilbert scales, formulate our general
smoothness conditions and give some consequences that follow from
the general smoothness conditions by exploiting either operator
monotonicity or interpolation in variable Hilbert scales. In Section
3 we treat the case of {\it a priori parameter choice} of the
regularization parameter $\sigma_n$ and Section 4 treats the case of
choosing $\sigma_n$ {\it a posteriori} by the discrepancy principle.
In Section 5 we discuss practical issues of choosing the starting
value $x_0$ and the parameters $s$, $n$ and $\alpha_k$, $k=1,...,n$.
In particular, some fast globally convergent algorithm for realizing
the discrepancy principle is proposed which is based Newton's method
applied to some properly transformed equations. For testing the
algorithm, numerical experiments are performed in Section 6.
%
%
\section{Preliminaries}\label{s2}
\setcounter{equation}{0}
\subsection{Properties of the regularization method}
In our further study, instead with $B$, we will work with the
inverse $G=B^{-1}$. Following proposition gives us some equivalent
representation for $x_n^\delta$ defined by (\ref{f12}) along with
some preliminary properties which will be useful for deriving order
optimal error bounds.
\begin{proposition}\label{p21}
Let $T=AG^s$, $G=B^{-1}$ and $\sigma_n$ be defined by (\ref{f13}).
Then, the regularized solution $x_n^\delta$ defined by (\ref{f12})
possesses the representation
\begin{equation}\label{f21}
x_n^\delta - x_0 = G^{s} g_n (T^*T) T^* ( y^\delta - A x_0) \quad
\mbox{with} \quad g_n (\lambda) = \frac{1}{\lambda} \left ( 1 -
\prod_{k=1}^n \frac{\alpha_k}{\lambda + \alpha_k} \right ) .
\end{equation}
In addition, the function $g_n: (0, c] \to (0, \infty)$ with
$c=\|T\|^2$ and the corresponding residual function $r_n (\lambda):=
1 - \lambda g_n (\lambda)$ obey the properties
\begin{eqnarray}
\mbox{\rm (i)}  & &
g_n (\lambda)   \le \sigma_n, \label{f22}\\[1ex]
\mbox{\rm (ii)} & & \lambda g_n (\lambda) \le 1, \label{f23}\\[1ex]
\mbox{\rm (iii)} & & \lambda r_n (\lambda)  \le \sigma_n^{-1} ,
\label{f24}\\[1ex]
\mbox{\rm (iv)} & & r_n (\lambda) \le \sigma_n^{-1} g_n (\lambda) .
\label{f25}
\end{eqnarray}
\end{proposition}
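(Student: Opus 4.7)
The plan is to reduce the iteration to a standard Tikhonov-type recursion for the bounded operator $T=AG^s$ via a change of variable, identify the generating function by induction, and then read off the four inequalities from an elementary product bound.

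First I would change variables. Let $e_k=x_k^\delta-x_0$ and $v_k=B^s e_k$. Using the factorization $A^*A+\alpha_k B^{2s}=B^s(G^sA^*AG^s+\alpha_k I)B^s=B^s(T^*T+\alpha_k I)B^s$ together with $G^sA^*=(AG^s)^*=T^*$, the defining recursion rewrites as
\begin{equation*}
v_k=v_{k-1}-(T^*T+\alpha_k I)^{-1}T^*(Tv_{k-1}-z),\qquad z:=y^\delta-Ax_0,
\end{equation*}
which simplifies to $v_k=\alpha_k(T^*T+\alpha_k I)^{-1}v_{k-1}+(T^*T+\alpha_k I)^{-1}T^*z$. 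Since $v_0=0$, an easy induction and the functional calculus for the bounded self-adjoint operator $T^*T$ show $v_k=g_k(T^*T)T^*z$ with scalar functions $g_k$ satisfying the recursion $g_k(\lambda)=(\alpha_k g_{k-1}(\lambda)+1)/(\lambda+\alpha_k)$ and $g_0\equiv 0$. Translating back via $x_n^\delta-x_0=G^sv_n$ will yield the representation \eqref{f21} once I verify by induction that the closed-form $g_n(\lambda)=\lambda^{-1}(1-\prod_{k=1}^n\alpha_k/(\lambda+\alpha_k))$ satisfies the recursion—a one-line algebraic check using $\alpha_k+\lambda(1-\prod_{j<k}\cdot)=\lambda+\alpha_k-\alpha_k\prod_{j<k}\cdot$.

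For the four inequalities I would first record the key estimate
\begin{equation*}
r_n(\lambda)=\prod_{k=1}^{n}\frac{\alpha_k}{\lambda+\alpha_k}\le\frac{1}{1+\lambda\sigma_n},
\end{equation*}
obtained by expanding $\prod_{k=1}^n(1+\lambda/\alpha_k)\ge 1+\lambda\sum_k 1/\alpha_k=1+\lambda\sigma_n$ (all cross terms in the expansion are nonnegative). Property (iii) is then immediate since $\lambda r_n(\lambda)\le\lambda/(1+\lambda\sigma_n)\le\sigma_n^{-1}$. Property (iv) rearranges as $r_n(\lambda)(1+\lambda\sigma_n)\le 1$, which is exactly the displayed estimate. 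Property (ii) is trivial from $r_n\ge 0$, and for (i) I would use the elementary telescoping inequality $1-\prod_k a_k\le\sum_k(1-a_k)$ valid for $a_k\in[0,1]$, applied with $a_k=\alpha_k/(\lambda+\alpha_k)$, giving $1-r_n(\lambda)\le\sum_k\lambda/(\lambda+\alpha_k)\le\lambda\sigma_n$, hence $g_n(\lambda)\le\sigma_n$.

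The only real subtlety is domain/range bookkeeping for the unbounded operator $B$: I would point out that $G=B^{-1}$ is bounded self-adjoint and positive, so $G^s$ is bounded and $T=AG^s$ is bounded on all of $X$; this ensures that the functional calculus is applied to a bounded self-adjoint operator $T^*T$ on $[0,c]$ and that each iterate $v_k$ lies in a suitable domain so that $e_k=G^sv_k\in\mathcal{D}(B^s)$ at every step. Beyond this, the argument is purely algebraic, and the four estimates reduce to the single scalar bound on $r_n$.
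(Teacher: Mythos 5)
Your proposal is correct and follows essentially the same route as the paper: all four inequalities are reduced to elementary scalar facts about the product $r_n(\lambda)=\prod_{k=1}^n\alpha_k/(\lambda+\alpha_k)$, and your key bound $r_n(\lambda)(1+\lambda\sigma_n)\le 1$, obtained by expanding $\prod_k(1+\lambda/\alpha_k)$, is precisely the inequality the paper uses to prove (iv), from which (iii) also follows. The only cosmetic differences are that you prove (i) by the telescoping inequality $1-\prod_k a_k\le\sum_k(1-a_k)$ where the paper uses convexity of $r_n$ and the tangent line at $0$, and that you write out in full the change-of-variables derivation of the representation \eqref{f21}, which the paper dismisses as standard; both versions are sound.
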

\begin{proof}
The proof of the representation (\ref{f21}) is standard. For the
proof of (i) we follow the paper \cite{hg98} and observe that the
function $r_n (\lambda) = \prod_{k=1}^n \alpha_k / (\lambda +
\alpha_k )$ is monotonically decreasing and convex with $r_n (0) =
1$. From these properties we conclude that
\[
r_n (\lambda) \ge r_n (0) + r_n'(0) \lambda .
\]
Since $r_n (0) = 1$ and $r_n' (0) = - \sigma_n$ we obtain $r_n
(\lambda) \ge 1 - \lambda \sigma_n$, which is equivalent to (i).
The proof of (ii) follows from the representation
$ \lambda g_n (\lambda) = 1 - \prod_{k=1}^n \alpha_k / (\lambda +
\alpha_k ) $.
For the proof of (iv), we multiply (\ref{f25}) by $\lambda \sigma_n
/ r_n (\lambda)$ and obtain the equivalent inequality
\[
\lambda \sum_{k=1}^n \frac{1}{\alpha_k} \le \prod_{k=1}^n \left ( 1
+ \frac{\lambda}{\alpha_k} \right ) - 1.
\]
This inequality, however, always holds true since the left hand side
is the first order term of the polynomial in $\lambda$ on the right
hand side.
For the proof of (iii) we use (iv) and (ii) and obtain $r_n
(\lambda) \le \sigma_n^{-1} g_n (\lambda) \le \sigma_n^{-1} /
\lambda$, which is equivalent to (iii).
\end{proof}

\begin{remark}
Note that in our forthcoming analysis we will also exploit the fact
that $r_n (\lambda) \le 1$ for $\lambda \in (0,c]$ which is a
consequence of the nonnegativity of $g_n (\lambda)$. Further note
that property (iii) of the above proposition tells us that the
regularization method (\ref{f21}) has at least a qualification of
$p_0 = 1$. For the concept of qualification, see \cite{vv86}.
Finally we note that our analysis does not require the full strength
of the properties (i) -- (iii) of Proposition \ref{p21}. Indeed,
property (i) will be exploited for the $\lambda$ -- range $\lambda
\le \sigma_n^{-1}$, and properties (ii) and (iii) will be exploited
for the $\lambda$ -- range $\lambda \ge \sigma_n^{-1}$.
\end{remark}

For deriving order optimal error bounds for $\| x_n^\delta -
x^\dagger\|$ with $x_n^\delta$ defined by (\ref{f21}) we introduce
the regularized solution $x_n$ with exact data by
\[
x_n - x_0 = G^{s} g_n (T^*T) T^* (y - A x_0) .
\]
It can easily be checked that the following error representations
\begin{equation}\label{f26}
x_n^\delta - x_n = G^{s} g_n (T^*T) T^* (y^\delta - y)  \quad
\mbox{and} \quad
x^\dagger - x_n  =  G^{s} r_n (T^*T)  G^{-s} ( x^\dagger - x_0)
\end{equation}
are valid. From these error representations we see that in the
$\|Ax\|$ -- norm and in the $X_p$ -- norm $\| x\|_p:= \| G^{-p} x\|$
we have
\begin{eqnarray}
\| A x_n^\delta - A x_n \| & = & \| T g_n (T^*T) T^* (y^\delta -
y)\| , \label{f27}
\\[1ex]
\| A x^\dagger - A x_n \| & = & \| T  r_n (T^*T)  G^{-s} ( x^\dagger - x_0) \|,  \label{f28} \\[1ex]
\| x_n^\delta - x_n \|_p & = & \| G^{s-p} g_n (T^*T) T^* (y^\delta -
y) \| , \label{f29}
\\[1ex]
\| x^\dagger - x_n \|_p & = & \| G^{s-p} r_n (T^*T)  G^{-s} (
x^\dagger - x_0 ) \| . \label{f210}
\end{eqnarray}
%
\subsection{Smoothness assumptions}
We formulate our smoothness assumptions in terms of some densely
defined unbounded selfadjoint strictly positive operator $B: X \to
X$.
We introduce a {\it Hilbert scale} $(X_r)_{r \in \mathbb R}$ induced
by the operator $B$ which is the completion of $\cap_{k \in \mathbb
R} {\mathcal D} (B^k)$ with respect to the Hilbert space norm
$\|x\|_r = \| B^{r} x \|$, $r \in \mathbb R$.
For technical reasons, instead of $B$ we will work  with the inverse
$G:=B^{-1}$, which is a bounded linear injective and selfadjoint
operator with non-closed range ${\mathcal R}(G)$. Note that the
above Hilbert space norm $\| \cdot \|_r$ may be represented by
\[
\|x\|_r = \| B^r x \| =  \| G^{-r} x \| , \quad r \in \mathbb R  .
\]
In addition, according to \cite{he95,mp03} we call a function
$\varrho: (0, a] \to (0,b]$ an {\it index function} if it is
continuous and strictly increasing with $\varrho(0${\small +}$) = 0$
and assume
\begin{assumption1}
There exist constants $M \ge m > 0$ and some index function
$\varrho: (0, a] \to (0,b]$ with $a=\|G\|$ and $b=\varrho (a)$ such
that
\begin{eqnarray*}
\mbox{(i)} &  m \left \| \varrho (G) \, x \right \| \le \|Ax\|
& \quad \text{for all $x \in X$} ,  \\[0.5ex]
\mbox{(ii)} & \|Ax\|  \le M \left \| \varrho (G) \,  x \right \| &
\quad \mbox{for all $x \in X$} .
\end{eqnarray*}
\end{assumption1}
\begin{assumption2}
For some positive constants $E$ and $p$ we assume the solution
smoothness $x^\dagger - x_0 = G^p v$ with $v \in X$ and $\| v\| \le
E$. That is,
\[
x^\dagger \in M_{p,E} = \left \{ x \in X \, \bigl | \, \|x - x_0
\|_p \le E \right\} .
\]
\end{assumption2}
Assumption A1 characterizes the smoothing properties of the operator
$A$ relative to the operator $G$ and allows the study of problems
with finitely and infinitely smoothing operators $A$ in a unique
manner. Typical index functions in applications are power type index
functions $\varrho (t) = t^a$ for problems (\ref{f11}) with finitely
smoothing operators $A$ and index functions $\varrho (t) = \exp
(t^{-a} )$ where the inverse $\varrho^{-1}$ is of logarithmic type
for problems with infinitely smoothing operators $A$. Such problems
appear, e.\,g., in inverse heat conduction. Assumption A2
characterizes the smoothness of the element $x^\dagger - x_0$ in the
Hilbert scale $(X_r)_{r \in \mathbb R}$. By using Assumption A2 we
can study different smoothness situations for $x^\dagger - x_0$.

Let us give some comment on order optimal convergence rates for
identifying $x^\dagger$ from noisy data $y^\delta \in Y$ under the
link assumption A1 and the smoothness assumption A2. Let ${\mathcal
R}:Y \to X$ be an arbitrary method and ${\mathcal R} y^\delta$ be an
approximate solution for $x^\dagger$. Then, the quantity
\[
\Delta (\delta, {\mathcal R}) = \sup \bigl \{ \|{\mathcal R}
y^\delta -x^\dagger \| \, \bigl | \, y^\delta \in Y , \,
\|y-y^\delta\|\le \delta , \, x^\dagger \in M_{p,E} \bigr \}
\]
is called {\it worst case error} of the method ${\mathcal R}$ on the
set $M_{p,E}$.
An optimal method ${\mathcal R}_{\rm opt}$ is characterized by
$\Delta (\delta, {\mathcal R}_{\rm opt}) = \inf_{\mathcal R} \Delta
(\delta, {\mathcal R})$, and this quantity is called {\it best
possible worst case error} on the set $M_{p,E}$.
Under Assumption A1(ii) the best possible worst case error can be
estimated from below by
\begin{equation}\label{f211}
\inf_{\mathcal R} \Delta (\delta, {\mathcal R}) \ge
 E \,  \Bigl [ \psi_p^{-1} \Bigl ( \frac{\delta}{M E } \Bigr ) \Bigr ]^p
 \quad \text{with} \quad \psi_p (t):= t^p \varrho
 (t)
\end{equation}
provided $\delta/(M E)$ is an element of the spectrum of the
operator $\psi_p (G)$, see \cite{ta98-2} and \cite[proof of Theorem
2.2]{npt05}.
This lower bound will serve us as benchmark for the best possible
accuracy for identifying $x^\dagger$ from noisy data $y^\delta \in
Y$ under the link assumption A1 and the smoothness assumption A2.
%
\subsection{Exploiting operator monotonicity}
Operator monotonicity has been applied before in different papers
for deriving order optimal error bounds in regularization under
general smoothness assumptions, see, e.\,g.,
\cite{bhty06,mt06,nt08}. In this section we are going to derive some
consequences of Assumption A1 by using operator monotonicity of
certain functions. Considering the error representations (\ref{f28})
-- (\ref{f210}) we see that two types of estimates are helpful for
deriving error bounds for the noise amplification error  and the
regularization error in the $\|Ax\|$ -- norm and the $\|x\|_p$ --
norm, namely estimates of the type
\[
\|f_1(T^*T) G^{-s} x \| \le \| G^{-p} x \| \quad \mbox{and} \quad \|
G^{s-p} x \| \le \| f_2(T^*T) x \|
\]
with certain functions $f_1$, $f_2$ and  some constant $p> 0$ from
Assumption A2. We will derive such estimates from Assumption A1 by
using the concept of operator monotone functions which is based on
the concept of semiordering. Note that for two nonnegative,
self-adjoint bounded linear operators $S_1$, $S_2 \in {\mathcal L}
(X)$ the {\it semiordering} $S_1 \le S_2$ is defined by $(S_1 x,x)
\le (S_2 x,x)$ for all $x \in X$, or equivalently, by $\| S_1^{1/2}
x\| \le \| S_2^{1/2} x\|$ for all $x \in X$.
\begin{definition} \label{d23}
An index function $f: (0, a] \to \mathbb R$ is called {\it operator
monotone} if and only if for any pair of self-adjoint linear
operators $S_1$, $S_2$ with spectra in $(0,a]$, the relation $S_1
\le S_2$ implies the relation $f(S_1) \le f(S_2)$.
\end{definition}

Properties and examples for operator monotone functions may be found
in \cite{bh97,mp03b,uch10}. Our further study is based on several
functions. The first function is
\begin{equation}\label{f212}
\psi_r ( \lambda)= \lambda^r \varrho ( \lambda ), \quad \psi_r: (0,
a] \to ( 0, a^r \varrho (a) ]
\end{equation}
with  $\varrho$ from Assumption A1, $a = \|G\|$ and arbitrary
constant $r$ for which $\lambda^r \varrho (\lambda)$ is
monotonically increasing. Two other functions $h$ and $w$ are
\begin{equation}\label{f213}
h(t) = \left [  \psi_s^{-1} (\sqrt{t})   \right ]^{p-s} , \quad
w(t)= 1/h(t)
\end{equation}
with constant $s$ from (\ref{f21}) and $p$ from Assumption A2.

\begin{remark}\label{r24}
\rm The function $h$ defined by (\ref{f213}) possesses the following
properties:
\begin{enumerate}
\item[(i)] Due to the identity $\sqrt{t} h(t) = \psi_p \left ( (
\psi_s^2)^{-1} (t) \right )$, the function $t \to \sqrt{t} h(t)$ is
an index function and hence monotonically increasing in the both
cases $s \ge p$ and $s \le p$.
\item[(ii)] In the case of high order regularization with $s \ge p$,
the function $h$ is non-increasing. Hence, for $s \ge p$, the
function $t \to h(t) / \sqrt{t}$ is always monotonically decreasing.
\end{enumerate}
\end{remark}
In our next proposition we derive some estimates by using Assumption
A1.
\begin{proposition} \label{p25}
Let $h$ and $w$ be defined by (\ref{f213}). Then,
\begin{align}
& \| w ( T^*T/M^2 ) G^{-s} x \| \le \| G^{-p} x \| & &
\text{\hspace{-0.3cm} under A1(ii) and $w^2$ operator monotone},
\label{f214}
\\[1ex]
& \| w ( T^*T/m^2 ) G^{-s} x \| \le \| G^{-p} x \| & &
\text{\hspace{-0.3cm} under
A1(i) and $h^2$ operator monotone}, \label{f215} \\[1ex]
& \| G^{s-p} x \| \le  \| w ( T^*T/m^2  ) x  \| & &
\text{\hspace{-0.3cm} under A1(i) and $w^2$ operator monotone},
\label{f216}
\\[1ex]
& \| G^{s-p} x \| \le  \| w ( T^*T/M^2  ) x  \| & &
\text{\hspace{-0.3cm} under A1(ii) and $h^2$ operator monotone}.
\label{f217}
\end{align}
\end{proposition}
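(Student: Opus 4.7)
The starting point is to reinterpret Assumption A1 as an operator inequality. Since $T = AG^s$, we have $T^*T = G^s A^*A G^s$, so the two-sided bound $m\|\varrho(G)x\| \le \|Ax\| \le M\|\varrho(G)x\|$ squares up and conjugates by $G^s$ to give
\begin{equation*}
m^2 \, \psi_s(G)^2 \;\le\; T^*T \;\le\; M^2\, \psi_s(G)^2,
\end{equation*}
where $\psi_s(\lambda)=\lambda^s\varrho(\lambda)$, in the sense of the semiordering recalled just before Definition~\ref{d23}. Equivalently, $T^*T/M^2 \le \psi_s(G)^2$ under A1(ii) and $\psi_s(G)^2 \le T^*T/m^2$ under A1(i).

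The second ingredient is the elementary identity $w\bigl(\psi_s(\lambda)^2\bigr)=\lambda^{s-p}$ and $h\bigl(\psi_s(\lambda)^2\bigr)=\lambda^{p-s}$, which follows directly from the definition $h(t)=[\psi_s^{-1}(\sqrt{t})]^{p-s}$ and $w=1/h$. Applied via the functional calculus, this yields
\begin{equation*}
w^2\bigl(\psi_s(G)^2\bigr) = G^{2(s-p)}, \qquad h^2\bigl(\psi_s(G)^2\bigr) = G^{2(p-s)}.
\end{equation*}

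With these two ingredients the four estimates follow by essentially the same recipe. For (\ref{f214}), operator monotonicity of $w^2$ applied to $T^*T/M^2 \le \psi_s(G)^2$ gives $w^2(T^*T/M^2)\le G^{2(s-p)}$; applying this to $G^{-s}x$ and using $\|G^{s-p}G^{-s}x\|=\|G^{-p}x\|$ produces the desired bound. Estimate (\ref{f216}) is symmetric: $w^2$ operator monotone applied to $\psi_s(G)^2 \le T^*T/m^2$ reverses the inequality and gives $G^{2(s-p)}\le w^2(T^*T/m^2)$ directly. For (\ref{f215}) and (\ref{f217}), the hypothesis is on $h^2$ rather than $w^2$, so one first obtains an inequality for $h^2$ and then inverts it using the standard fact that, for strictly positive bounded self-adjoint operators, $0<A\le B$ implies $B^{-1}\le A^{-1}$; since $w^2 = 1/h^2$, this converts the $h^2$-inequality into the required $w^2$-inequality, and the remaining manipulation to absorb the $G^{-s}$ factor in (\ref{f215}) is identical to that used for (\ref{f214}).

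The main obstacle is conceptual rather than computational: one has to keep straight in each of the four cases whether operator monotonicity pushes the inequality in the same or opposite direction relative to what is wanted, and in the two cases where the hypothesis is on $h^2$ rather than $w^2$ one has to justify the operator-inversion step (noting that all operators involved are strictly positive and bounded on the ranges of interest, since $B$ is strictly positive definite). Once these sign/direction bookkeeping issues are handled, the proof is a uniform four-line argument based on the two displayed facts above.
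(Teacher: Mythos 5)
Your proposal is correct and follows essentially the same route as the paper: rewrite A1 as the operator inequality $m^2\psi_s(G)^2 \le T^*T \le M^2\psi_s(G)^2$, use $w^2(\psi_s(G)^2)=G^{2(s-p)}$ and $h^2(\psi_s(G)^2)=G^{2(p-s)}$ together with operator monotonicity of $w^2$ or $h^2$, and in the $h^2$ cases invert the resulting inequality (the paper phrases this as operator monotonicity of $t\mapsto -1/t$, which is the same antitonicity of inversion you invoke). The only cosmetic slip is the phrase ``reverses the inequality'' in the derivation of (\ref{f216}), where monotonicity in fact preserves the order of $\psi_s(G)^2\le T^*T/m^2$; the displayed conclusion $G^{2(s-p)}\le w^2(T^*T/m^2)$ is nevertheless the correct one.
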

\begin{proof}
First, let us prove (\ref{f214}). It follows from Assumption A1(ii)
that
\[
\| AG^s x \| = \| Tx\| = \| (T^*T)^{1/2} x \| \le M \| \varrho (G)
G^s x \| ,
\]
which may be written in the equivalent form $ T^*T /M^2 \le
\varrho^2 (G) G^{2s}$. By using the function $\psi_s$ defined by
(\ref{f212}), this estimate can be written as $ T^*T /M^2 \le
\psi_s^2 (G)$.
Since $w^2:=1/h^2$ is assumed to be operator monotone and since $w^2
\left ( \psi_s^2 (G) \right ) = G^{2s-2p}$ we obtain that $ w^2
\left (  T^*T /M^2 \right ) \le G^{2s-2p}$ which gives $\| w \left (
 T^*T /M^2 \right ) x \| \le \| G^{s-p} x\| $ and hence
(\ref{f214}).
Second, let us prove (\ref{f215}). The link condition A1(i) may be
written as $ \psi_s^2 (G ) \le  T^*T /m^2 $.
Since $h^2$ is assumed to be operator monotone and since $h^2 \left
( \psi_s^2 (G) \right ) = G^{2p-2s}$ we obtain that $G^{2p-2s} \le
h^2 \left (  T^*T /m^2 \right ) $. Since $t \to -1/t$ is operator
monotone there follows that $w^2 \left (  T^*T /m^2 \right ) \le
G^{2s-2p}$, which gives (\ref{f215}). The proof of the estimates
(\ref{f216}) and (\ref{f217}) is similar.
\end{proof}
\begin{example}\label{e26} \rm
({\it Finitely smoothing case}). Let us assume that the operators
$A^*A$ and $G$ are related by
\begin{equation}\label{f218}
A^*A = G^{2a}
\end{equation}
where $a$ is some positive constant. In this case both Assumptions
A1(i) and A1(ii) hold true as equality with $\varrho (\lambda) =
\lambda^a$, $m=1$ and $M=1$. We easily see that the function
$\varrho$ is an index function and that the function $\psi_s$
defined in (\ref{f212}) attains the form $\psi_s (\lambda) =
\lambda^{a+s}$. Since $\psi_s^{-1} (\sqrt{t}) = t^{1/(2a+2s)}$ we
obtain that the functions $h$ and $w$ defined in (\ref{f213})
possess the representations
\[
h(t) = t^{\frac{p-s}{2(a+s)}} \,, \quad w(t) =
t^{\frac{s-p}{2(a+s)}} \,.
\]
Power functions $t^\nu$ are operator monotone for $0 \le \nu \le 1$,
see \cite{mp03b}. Hence, under the natural side conditions $p \ge
0$, $a>0$ and $s > -a$ we obtain

\vspace{0.2cm}

\begin{tabular}{ll}
(i) & that $w^2$ is an operator monotone function for $s \ge p$,
\\[0.5ex]
(ii) & that $h^2$ is an operator monotone function for $s \le p \le
2s+a$.
\end{tabular}
\end{example}
%
\subsection{Interpolation in variable Hilbert scales}
By interpolation in variable Hilbert scales we can estimate the
intermediate norm $\|x\|$ if estimates for some weaker norm $\|
\varrho (G) x \|$ and some stronger norm $\| x\|_r$ are known.
Variable Hilbert scale inequalities have been introduced by Hegland,
see \cite{he92,he95}. Such inequalities which extend the classical
interpolation inequality became a powerful tool in the analysis of
regularization under general smoothness conditions, see, e.g.,
\cite{mh08,mp03,npt05,nst03,rt09,ta98-2}. Variable Hilbert scale
interpolation is sometimes also called interpolation with a function
parameter, see \cite{bs07,mm08}. In our paper we are aiming to
combine special variable Hilbert scale inequalities with tools from
operator monotonicity.
\begin{proposition} \label{p27}
Assume $r \ge 0$, $\|x\|_r\le c_1$ and $\| \varrho (G) x \| \le c_2
$ with some index function $\varrho$ and constants $c_1$, $c_2$. Let
$\xi_r (t):= \psi_r^2 (t^{1/(2r)})$ be convex where $\psi_r$ is
given by (\ref{f212}). Then,
\begin{equation}\label{f219}
\| x \| \le c_1  \left [ \psi_r^{-1} \left ( \frac{c_2 }{c_1} \right
) \right ]^r .
\end{equation}
\end{proposition}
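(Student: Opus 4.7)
The plan is to reduce the inequality to a scalar moment estimate via the spectral theorem for $G$, and then apply Jensen's inequality to the convex function $\xi_r$ with a suitably chosen probability measure. Let $\{E_\lambda\}$ denote the spectral family of $G$ and set $d\mu(\lambda) = d\|E_\lambda x\|^2$, a finite positive Borel measure supported in $(0,a]$, so that
\[ \|x\|^2 = \int d\mu(\lambda), \quad \|x\|_r^2 = \int \lambda^{-2r}\, d\mu(\lambda), \quad \|\varrho(G) x\|^2 = \int \varrho(\lambda)^2\, d\mu(\lambda). \]
The case $r=0$ is trivial ($\|x\|=\|x\|_r\le c_1$), and if $\|x\|_r = 0$ then $x=0$, so assume $r>0$ and $B := \|x\|_r^2 > 0$.

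Introduce the probability measure $d\tilde\nu(\lambda) := \lambda^{-2r}\, d\mu(\lambda)/B$. The identity $\xi_r(\lambda^{2r}) = \psi_r^2(\lambda) = \lambda^{2r}\varrho(\lambda)^2$ yields
\[ \int \lambda^{2r}\, d\tilde\nu(\lambda) = \frac{\|x\|^2}{B} \quad\text{and}\quad \int \xi_r(\lambda^{2r})\, d\tilde\nu(\lambda) = \frac{\|\varrho(G) x\|^2}{B}. \]
Because $\psi_r$ is an index function, $\xi_r = \psi_r^2 \circ (\cdot)^{1/(2r)}$ is strictly increasing, hence invertible, and by hypothesis it is convex. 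Jensen's inequality applied to $\xi_r$ then gives $\xi_r(\|x\|^2/B) \le \|\varrho(G)x\|^2/B$, i.e.\
\[ \|x\|^2 \le B \, \xi_r^{-1}\bigl(\|\varrho(G) x\|^2/B\bigr). \]

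It remains to replace $\|\varrho(G)x\|^2$ by $c_2^2$ and $B$ by $c_1^2$ on the right. The first substitution is immediate from monotonicity of $\xi_r^{-1}$. For the second, I would verify that $B \mapsto B \, \xi_r^{-1}(c_2^2/B)$ is nondecreasing by the change of variable $v := \psi_r^{-1}(c_2/\sqrt{B})$: the defining relation $v^r \varrho(v) = c_2/\sqrt{B}$ rewrites this quantity as $Bv^{2r} = c_2^2/\varrho(v)^2$, and increasing $B$ decreases $v$, hence decreases $\varrho(v)$, so $c_2^2/\varrho(v)^2$ grows. Combining the two substitutions with the identity $\xi_r^{-1}(s) = [\psi_r^{-1}(\sqrt{s})]^{2r}$ (read off from the definition of $\xi_r$) gives $\|x\|^2 \le c_1^2 [\psi_r^{-1}(c_2/c_1)]^{2r}$, and taking square roots yields the claim.

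The main obstacle is arranging the application of Jensen so that the two known moments $\|x\|_r^2$ and $\|\varrho(G) x\|^2$ appear on the correct sides of the inequality; the tilted weight $\lambda^{-2r}$ defining $\tilde\nu$ is what forces the integrand inside $\xi_r$ to become $\lambda^{2r}$, after which the monotonicity substitution in $B$, handled via the change of variable above, is the only non-routine remaining step.
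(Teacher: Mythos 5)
Your proof is correct and follows essentially the same route as the paper: Jensen's inequality for the convex function $\xi_r$ with respect to the probability measure $\lambda^{-2r}\,\mathrm{d}\|E_\lambda x\|^2/\|x\|_r^2$, followed by a monotonicity argument (resting only on $\varrho$ being increasing) that allows $\|x\|_r$ to be replaced by $c_1$. The only cosmetic differences are that the paper parametrizes the spectral integral via the spectral family of $G^{-2r}$ rather than $G$, and performs the substitution $\|x\|_r \to c_1$ before inverting $\psi_r$ instead of after.
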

\begin{proof}
Let $E_\lambda$ the spectral family of $G^{-2r}$. Since $\xi_r$ is
convex we may employ Jensen's inequality and obtain
\[
\xi_r \left ( \frac{\|x\|^2}{\|x\|_r^2} \right )  \le \frac{ \int
\xi_r ( \lambda^{-1} ) \lambda \, \mbox{d} \| E_\lambda x\|^2}
{\|x\|_r^2}  = \frac{ \| \varrho (G) x\|^2 } {\|x\|_r^2}  ,
\]
or equivalently,
$ \|x\|_r \cdot \psi_r \left (\| x \|^{1/r}/\|x\|_r^{1/r}  \right )
\le \| \varrho (G) x \| . $
Since $\varrho (t) = t^{-r} \psi_r (t)$ is increasing we obtain that
$t \to t \psi_r (1/t^{1/r})$ is decreasing. Hence,
\begin{equation*}
c_1 \cdot \psi_r \left (\frac{\| x \|^{1/r}}{c_1^{1/r}}  \right )
\le \| x\|_r \cdot \psi_r \left (\frac{\| x \|^{1/r}}{\|
x\|_r^{1/r}} \right ) \le \| \varrho (G) x \| \le c_2 .
\end{equation*}
Rearranging terms gives (\ref{f219}).
\end{proof}
In our next proposition we provide a further estimate which is based
on interpolation arguments.

\begin{proposition}\label{p28}
Let $x_n$ be the regularized solution (\ref{f21}) with exact data
$y$, let $\varrho$ be an arbitrary index function, let Assumption A2
hold and assume $0 \le s \le p$. Let in addition $\psi_s$ be defined
by (\ref{f212}) and
\begin{equation}\label{f220}
 f(t):= \psi_s^2 \left (
t^{1/(2p-2s)} \right )
\end{equation}
be convex. Then, for all regularization parameters $\sigma_n$,
\begin{equation}\label{f221}
\| x_n - x^\dagger \|_{2s-p} \le  E \, \left [ \psi_p^{-1} \left (
\frac{\| \varrho (G) (x_n-x^\dagger )\|}{E } \right ) \right
]^{2p-2s} \,.
\end{equation}
\end{proposition}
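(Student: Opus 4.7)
The strategy is to mimic the proof of Proposition~\ref{p27} by applying Jensen's inequality with the convex function $f(t)=\psi_s^2(t^{1/(2p-2s)})$ from the hypothesis, but tailored so that the intermediate norm on the left-hand side is $\|\cdot\|_{2s-p}$. First, I would invoke the error representation~\eqref{f210} with exact data together with Assumption~A2 to write
\[
z := x_n-x^\dagger = -\,G^s r_n(T^*T)G^{p-s}v,\qquad \|v\|\le E,
\]
so that the constant $E$ in the target bound is identified with $\|x^\dagger-x_0\|_p=\|v\|$.

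The Jensen step would proceed as follows. Let $\{E_\mu\}_{\mu\in(0,a]}$ denote the spectral family of $G$ and set $d\sigma:=d\|E_\mu z\|^2$. Taking as probability measure $d\pi:=\mu^{-2s}\,d\sigma/\|z\|_s^2$ --- which is finite because $\|z\|_s=\|r_n(T^*T)G^{p-s}v\|<\infty$ under $0\le s\le p$ and $r_n(T^*T)\le I$ --- the weight $\mu^{-2s}$ is arranged so that
\[
\int \mu^{2p-2s}\,d\pi = \|z\|_{2s-p}^2/\|z\|_s^2, \qquad \int f(\mu^{2p-2s})\,d\pi = \|\varrho(G)z\|^2/\|z\|_s^2.
\]
Applying Jensen's inequality to the convex $f$ and taking a square root yields the intermediate bound
\[
\|z\|_{2s-p}\ \le\ \|z\|_s\,\bigl[\psi_s^{-1}\bigl(\|\varrho(G)z\|/\|z\|_s\bigr)\bigr]^{p-s},
\]
which is the direct analogue of the key inequality in the proof of Proposition~\ref{p27}.

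The last step is to upgrade this $\psi_s$-form inequality (featuring $\|z\|_s$ and exponent $p-s$) into the $\psi_p$-form stated in the proposition (featuring $E$ and exponent $2p-2s$). For this I would exploit the algebraic identity $\psi_p(\lambda)=\lambda^{p-s}\psi_s(\lambda)$ together with a monotonicity argument in the same spirit as the one used at the end of the proof of Proposition~\ref{p27}, namely monotonicity in the parameter $c$ of the map $c\mapsto c\,[\psi_s^{-1}(d/c)]^{p-s}$, combined with an a~priori control of $\|z\|_s$ coming from Assumption~A2. \emph{The main obstacle} lies in this final step: because $G$ does not commute with $r_n(T^*T)$, the naive bound $\|z\|_s\le E$ is not immediate from the factorization of $z$, and one needs to exploit simultaneously the self-adjointness and contractivity $r_n(T^*T)\le I$, the explicit structure $T^*T=G^sA^*AG^s$, and the convexity of $f$ (which effectively licences the passage from the $\psi_s$-form to the $\psi_p$-form) in order to close the gap and reach the stated inequality.
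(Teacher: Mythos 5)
Your Jensen step is exactly the one in the paper: with the weight $\mu^{-2s}$ (equivalently, the spectral family of $G^{-2}$ with weight $\lambda^{s}$) you arrive at the same inequality
\[
f\Bigl( \|z\|_{2s-p}^2 / \|z\|_s^2 \Bigr) \le \|\varrho(G)z\|^2 / \|z\|_s^2 ,
\]
which is (\ref{f223}). The genuine gap is in the elimination of $\|z\|_s$, and the route you sketch for it would not deliver the proposition even if it could be carried out. The missing idea is not a bound of the form $\|z\|_s \le E$ but the \emph{quadratic} inequality $\|z\|_s^2 \le E\,\|z\|_{2s-p}$, which the paper establishes as (\ref{f222}) before doing any interpolation. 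Writing $u:=G^{-s}(x^\dagger - x_0)$, since $0\le r_n\le 1$ one has $r_n\le r_n^{1/2}$, hence
\[
\|z\|_s^2=\|r_n(T^*T)u\|^2\le \|r_n^{1/2}(T^*T)u\|^2=\bigl(r_n(T^*T)u,u\bigr)=\bigl(G^{p-2s}z,\,G^{-p}(x^\dagger-x_0)\bigr)\le E\,\|z\|_{2s-p}
\]
by Cauchy--Schwarz and Assumption A2. Note that the non-commutativity of $G$ and $r_n(T^*T)$, which you correctly identify as the obstruction to $\|z\|_s\le E$, simply never arises here: the awkward factor $r_n(T^*T)$ is absorbed into the inner product, where $r_n(T^*T)u=G^{-s}z$.

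Substituting $\|z\|_s\le (E\|z\|_{2s-p})^{1/2}$ into the Jensen inequality (the paper does this through the auxiliary function $g(t)=t^{-2}f(t^2)$, which is increasing because $f$ is convex with $f(0)=0$, together with $r(t)=tf(t)=\psi_p^2(t^{1/(2p-2s)})$) is precisely what doubles the exponent from $p-s$ to $2p-2s$ and converts $\psi_s^{-1}$ into $\psi_p^{-1}$. By contrast, feeding any bound $\|z\|_s\le C$ into your intermediate inequality can only produce an estimate of the form $\|z\|_{2s-p}\le C\bigl[\psi_s^{-1}(\|\varrho(G)z\|/C)\bigr]^{p-s}$, which has the wrong exponent and the wrong inverse function and does not reduce to (\ref{f221}). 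So the obstacle you flag at the end is real, but it is resolved by the self-improving estimate (\ref{f222}), not by any refinement of the $\|z\|_s\le E$ idea.
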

\begin{proof}
Let us introduce the abbreviation $z=x^\dagger - x_n$. From
(\ref{f26}) we have the identity $G^{-s} z = r_n (T^*T) G^{-s}
(x^\dagger - x_0)$, and due to $r_n (\lambda) \le 1$ we have the
estimate $\| r_n^{1/2} (T^*T) \| \le 1$. We use these properties and
obtain due to Cauchy Schwarz inequality and Assumption A2 that
\begin{eqnarray}\label{f222}
\| z\|_s^2 & = & \| r_n (T^*T) G^{-s} ( x^\dagger - x_0) \|^2 \nonumber \\
& \le & \| r_n^{1/2} (T^*T) G^{-s} ( x^\dagger - x_0) \|^2 \nonumber \\
& = & (G^{p-2s} z, G^{-p} ( x^\dagger - x_0 ) ) \nonumber \\
& \le & E \|z\|_{2s-p} .
\end{eqnarray}
In the case $s=p$, (\ref{f221}) follows from (\ref{f222}). In the
case $0 \le s < p$, our next aim consists in deriving a second
estimate that relates the intermediate norm $\|z\|_{2s-p}$ with the
weaker norm $\|\varrho (G) z\|$ and the stronger norm $\|z\|_s$. We
derive this estimate by interpolation in variable Hilbert scales.
Since $f$ is convex we may employ Jensen's inequality and have
\[
f \left ( \frac{\|z \|_{2s-p}^2} {\|z\|_s^2} \right ) = f \left (
\frac{ \int  \lambda^{s-p} \cdot \lambda^s \, \mbox{d} \| E_\lambda
z \|^2 } {\int \lambda^s \, \mbox{d} \| E_\lambda  z \|^2 } \right )
\le \frac{ \int f ( \lambda^{s-p} ) \cdot \lambda^s \, \mbox{d} \|
E_\lambda  z  \|^2 } {\int \lambda^s \, \mbox{d} \| E_\lambda z\|^2
}
\]
where $E_\lambda$ is the spectral family of $G^{-2}$. Since
$f(\lambda^{s-p} ) \lambda^s = \varrho^2 (\lambda^{-1/2} )$ we
obtain
\begin{equation}\label{f223}
f \left ( \frac{\|z \|_{2s-p}^2} {\|z \|_s^2} \right )
 \le
\frac{ \int \varrho^2 ( \lambda^{-1/2} )  \, \mbox{d} \| E_\lambda z
\|^2 } {\|z \|_s^2 } = \frac{ \| \varrho (G) z  \|^2 } {\|z \|_s^2 }
.
\end{equation}
Now, let us eliminate $\| z\|_{s}$ in  estimate (\ref{f223}). We
write estimate (\ref{f222}) in the equivalent form
\begin{equation}\label{f224}
\| z \|_{2s-p}^{1/2} / E^{1/2} \le  \|z\|_{2s-p} / \|z\|_s
\end{equation}
and introduce two auxiliary functions $g$ and $r$ by
\begin{equation}\label{f225}
g(t):= t^{-2} f(t^2) \quad \mbox{and} \quad r(t) := t f(t) =
\psi_p^2 \left ( t^{1/(2p-2s)} \right ).
\end{equation}
Since $f$ is convex and $f(0)=0$, $g$ is monotonically increasing.
Hence, by (\ref{f224}), the monotonicity of $g$ and (\ref{f223}),
\[
g \left ( \frac{\| z\|_{2s-p}^{1/2} }{E^{1/2} } \right ) \le g\left
( \frac{\| z \|_{2s-p} }{ \| z \|_s} \right ) = \frac{\| z
\|_s^2}{\| z \|_{2s-p}^2 } \,
 f \left ( \frac{\| z \|_{2s-p}^2 }{ \| z \|_s^2} \right ) \le
\frac{\| \varrho (G) z\|^2}{\| z\|_{2s-p}^2} .
\]
Multiplying by $\| z\|_{2s-p}^2 /E^2$ gives
\[
 r \left ( \frac{\| z \|_{2s-p} }{ E }\right ) \le \frac{\| \varrho
(G) z \|^2 }{ E^2 } .
\]
Since the inverse $r^{-1}$ has the form $r^{-1} (\lambda) = [
\psi_p^{-1} (\sqrt{\lambda}) ]^{2p-2s}$, we obtain (\ref{f221}).
\end{proof}
%
%
\section{A priori parameter choice}
\setcounter{equation}{0}
In this section we make use of Proposition \ref{p25} for estimating
the total error in different norms in case the regularization
parameter $\sigma_n$ from (\ref{f13}) is chosen {\it a priori} by
\begin{equation}\label{f31}
\sigma_n^{-1} = \frac{\delta^2}{E^2} \left [ \psi_p^{-1} \left (
\frac{\delta}{mE} \right ) \right ]^{2 (s-p)} .
\end{equation}
We note that in the finitely smoothing case of Example \ref{e26} the
{\it a priori} parameter choice (\ref{f31}) attains the form
$\sigma_n^{-1} = m^2 \left ( \frac{\delta}{m E} \right )^{2 (s+a) /
(a+p)}$.
\subsection{Error bounds in the $\| Ax\|$ -- norm}
We start by providing error bounds for arbitrary $\sigma_n > 0$.

\begin{proposition}\label{p31}
Let $x_n^\delta$ be defined by (\ref{f21}), $h$ and $w$ be defined
by (\ref{f213}) and assume the solution smoothness A2.
\begin{enumerate}
\item[(i)] High order regularization ($s \ge p$): If $w^2:=1/h^2$ is
operator monotone, then under the link condition A1(ii),
\begin{equation}\label{f32}
\| Ax_n^\delta - Ax^\dagger\| \le \delta + E \sqrt{\sigma_n^{-1}}
h(\sigma_n^{-1}/ M^2) .
\end{equation}
\item[(ii)] Low order regularization ($s \le p$): If $h^2$ is operator
monotone and if $h(t)/\sqrt{t}$ is decreasing, then under the link
condition A1(i),
\begin{equation}\label{f33}
\| Ax_n^\delta - Ax^\dagger\| \le \delta + E \sqrt{\sigma_n^{-1}}
h(\sigma_n^{-1} / m^2) .
\end{equation}
\end{enumerate}
\end{proposition}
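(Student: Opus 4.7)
My plan is to bound $\|Ax_n^\delta - Ax^\dagger\|$ by the triangle inequality as the sum of the propagated-noise term $\|Ax_n^\delta - Ax_n\|$ and the regularization term $\|Ax^\dagger - Ax_n\|$, and to estimate each one by functional calculus in $T^*T$ starting from the identities (\ref{f27}) and (\ref{f28}). For the noise term I use the intertwining $T g_n(T^*T) T^* = (TT^*) g_n(TT^*)$ together with property (ii) of Proposition \ref{p21}, which says $\lambda g_n(\lambda) \le 1$; this gives operator norm at most one and hence a contribution of at most $\delta$ in both cases.

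For the regularization error I invoke Assumption A2 to write $x^\dagger - x_0 = G^p v$ with $\|v\|\le E$, so that $\|Ax^\dagger - Ax_n\| = \|T r_n(T^*T) G^{p-s} v\|$. The key algebraic move is to insert the identity $I = h(T^*T/M^2)\, w(T^*T/M^2)$ and apply estimate (\ref{f214}) of Proposition \ref{p25} to get $\|w(T^*T/M^2) G^{p-s} v\| = \|w(T^*T/M^2) G^{-s}(x^\dagger-x_0)\| \le \|v\| \le E$. This reduces the problem to the scalar bound
\begin{equation*}
\| T r_n(T^*T) h(T^*T/M^2) \| = \sup_{\lambda \in \sigma(T^*T)} \sqrt{\lambda}\, r_n(\lambda)\, h(\lambda/M^2).
\end{equation*}
In case (ii) the same argument runs with $M$ replaced by $m$ and (\ref{f215}) used in place of (\ref{f214}).

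The main work is to show this supremum equals $\sqrt{\sigma_n^{-1}}\, h(\sigma_n^{-1}/M^2)$, for which I split the spectrum at $\lambda = \sigma_n^{-1}$. For $\lambda \le \sigma_n^{-1}$ I discard $r_n(\lambda) \le 1$ and exploit the monotonic increase of $t \mapsto \sqrt{t}\, h(t)$ from Remark \ref{r24}(i); after rescaling by $M^2$, the supremum on this range is attained at $\lambda = \sigma_n^{-1}$. For $\lambda \ge \sigma_n^{-1}$ I use property (iii) of Proposition \ref{p21} in the form $\sqrt{\lambda}\, r_n(\lambda) \le \sigma_n^{-1}/\sqrt{\lambda}$ to obtain
\begin{equation*}
\sqrt{\lambda}\, r_n(\lambda)\, h(\lambda/M^2) \le \frac{\sigma_n^{-1}}{M} \cdot \frac{h(\lambda/M^2)}{\sqrt{\lambda/M^2}},
\end{equation*}
and rely on the fact that $t \mapsto h(t)/\sqrt{t}$ is decreasing, which is automatic for $s \ge p$ by Remark \ref{r24}(ii) and is assumed explicitly in case (ii); this again pins the supremum at $\lambda = \sigma_n^{-1}$. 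Both ranges thus deliver the common bound $\sqrt{\sigma_n^{-1}}\, h(\sigma_n^{-1}/M^2)$, and combining with the noise estimate yields (\ref{f32}); (\ref{f33}) is verbatim with $m$ in place of $M$. The delicate point is precisely the simultaneous need for both monotonicity properties of $h$ on complementary spectral ranges, which is why Remark \ref{r24} was recorded and why the extra hypothesis on $h(t)/\sqrt{t}$ appears in case (ii).
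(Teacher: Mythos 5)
Your proposal is correct and follows essentially the same route as the paper's proof: the noise term is bounded by $\delta$ via $\lambda g_n(\lambda)\le 1$, the regularization term is reduced through (\ref{f214}) (resp.\ (\ref{f215})) to the scalar quantity $\sup_\lambda \sqrt{\lambda}\,r_n(\lambda)h(\lambda/M^2)$, and the spectrum is split at $\lambda=\sigma_n^{-1}$ using exactly the two monotonicity properties of Remark \ref{r24}. The only cosmetic difference is that you make the insertion of $I=h\,w$ and the $M$-rescaling explicit where the paper leaves them implicit.
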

\begin{proof}
For estimating $\| Ax_n^\delta - Ax_n\|$ we use the error
representation (\ref{f27}) and obtain due to $\lambda g_n (\lambda)
\le 1$, see (\ref{f23}), the estimate
\begin{equation}\label{f34}
\| Ax_n^\delta - Ax_n\| = \| T g_n (T^*T) T^* (y^\delta - y) \| \le
\delta \sup_\lambda | \lambda g_n (\lambda) | \le \delta .
\end{equation}
For estimating $\| Ax_n - Ax^\dagger\|$ in the high order case $s
\ge p$ we use the error representation (\ref{f28}), exploit
Assumption A2 and estimate (\ref{f214}) which requires operator
monotonicity of $w^2$ and the second link condition A1(ii) and
obtain
\begin{equation} \label{f35}
\| Ax_n - Ax^\dagger \|  = \| T  r_n (T^*T) G^{-s} ( x^\dagger -
x_0) \|
 \le  \left \| T r_n (T^*T)  h \left (
 T^*T /M^2 \right ) \right \| \cdot E
\end{equation}
For estimating the norm term in (\ref{f35}) we distinguish two cases
$\lambda \le \sigma_n^{-1}$ and $\lambda \ge \sigma_n^{-1}$. In the
first case $\lambda \le \sigma_n^{-1}$ we use $g_n (\lambda) \ge 0$,
or equivalently $r_n (\lambda) \le 1$, exploit the increasing
behavior of $\sqrt{t} h(t)$ that holds true due to Remark \ref{r24}
(i) and obtain
\[
\sqrt{\lambda} \, r_n (\lambda)  h(\lambda/M^2) \le \sqrt{\lambda}
h(\lambda/M^2) \le \sqrt{\sigma_n^{-1}} h(\sigma_n^{-1}/M^2) .
\]
In the second case $\lambda \ge \sigma_n^{-1}$ we use $\lambda r_n
(\lambda) \le \sigma_n^{-1}$, see (\ref{f24}), exploit the
decreasing behavior of $h(t)/\sqrt{t}$ that holds true due to Remark
\ref{r24} (ii) and obtain
\[
\sqrt{\lambda} \, r_n (\lambda)  h(\lambda/M^2) \le \sigma_n^{-1}
h(\lambda/M^2)/\sqrt{\lambda}  \le \sqrt{\sigma_n^{-1}}
h(\sigma_n^{-1}/M^2) .
\]
From the both cases we obtain that (\ref{f35}) attains the form
\[
\| Ax_n - Ax^\dagger\|\le E \sqrt{\sigma_n^{-1}} h(\sigma_n^{-1}
/M^2 ).
\]
From this estimate and (\ref{f34}) we obtain (\ref{f32}).
For the proof of part (ii) we proceed analogously by exploiting
(\ref{f215}) instead of (\ref{f214}).
\end{proof}

\begin{remark}
Let us discuss the monotonicity condition in part (ii) of
Proposition \ref{p31} for the finitely smoothing case of Example
\ref{e26}. For this example we have
\[
h(t)/\sqrt{t} = t^{\frac{p-2s-a}{2(a+s)}} .
\]
Hence, $h(t)/\sqrt{t}$ is decreasing for $p \le 2s+a$. This
coincides with Natterer's side condition in (\ref{f14}).
\end{remark}

\begin{corollary}\label{c33}
Let be satisfied the assumptions of Proposition \ref{p31} and let
$\sigma_n$ be chosen by (\ref{f31}). Then, in the both cases (i) and
(ii) of Proposition \ref{p31} we have
\begin{eqnarray}
&& \mbox{\rm (i)} \qquad \| Ax_n^\delta - Ax^\dagger\| \le \left ( 1
+ M/m
\right ) \delta , \label{f36}\\[0.5ex]
&& \mbox{\rm (ii)} \qquad \| Ax_n^\delta - Ax^\dagger\| \le 2 \delta
. \label{f37}
\end{eqnarray}
\end{corollary}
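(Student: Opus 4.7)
The plan is to substitute the a priori choice (\ref{f31}) into the bounds (\ref{f32}) and (\ref{f33}) of Proposition \ref{p31} and verify that in each case the bias term $E\sqrt{\sigma_n^{-1}}\,h(\sigma_n^{-1}/c^2)$, with $c=M$ for (i) and $c=m$ for (ii), collapses to a constant multiple of $\delta$. The unifying observation is that (\ref{f31}) is tailored so that, after setting $\tau := \psi_p^{-1}(\delta/(mE))$, one obtains the clean identity $\sqrt{\sigma_n^{-1}} = m\,\psi_s(\tau)$. Indeed, $\psi_p(\tau) = \tau^p\varrho(\tau) = \delta/(mE)$ gives $\delta/E = m\,\tau^p\varrho(\tau)$, and multiplying by $\tau^{s-p}$ recovers (\ref{f31}).

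For the low order case (ii) I then compute directly: $\psi_s^{-1}(\sqrt{\sigma_n^{-1}}/m) = \psi_s^{-1}(\psi_s(\tau)) = \tau$, so by definition (\ref{f213}) we have $h(\sigma_n^{-1}/m^2) = \tau^{p-s}$, and therefore
\[
E\sqrt{\sigma_n^{-1}}\,h(\sigma_n^{-1}/m^2) = mE\,\tau^s\varrho(\tau)\,\tau^{p-s} = mE\,\psi_p(\tau) = \delta.
\]
Substituting into (\ref{f33}) yields (\ref{f37}).

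For the high order case (i) the factor $M$ in $h(\sigma_n^{-1}/M^2)$ no longer matches the factor $m$ appearing in (\ref{f31}), so I introduce the auxiliary point $\eta := \psi_s^{-1}(\sqrt{\sigma_n^{-1}}/M)$. Since $m\le M$, we have $\psi_s(\eta) = (m/M)\,\psi_s(\tau) \le \psi_s(\tau)$, hence $\eta\le\tau$ by strict monotonicity of $\psi_s$. The same algebra as above now gives $h(\sigma_n^{-1}/M^2) = \eta^{p-s}$ and
\[
E\sqrt{\sigma_n^{-1}}\,h(\sigma_n^{-1}/M^2) = ME\,\psi_p(\eta) \le ME\,\psi_p(\tau) = \frac{M}{m}\,\delta,
\]
where in the last step we used monotonicity of the index function $\psi_p$. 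Combined with (\ref{f32}) this gives (\ref{f36}).

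The only mildly delicate point is the high order case, where the mismatch between $m$ and $M$ must be absorbed via the monotonicity of $\psi_s$ and $\psi_p$; this is exactly what produces the factor $M/m$ in (\ref{f36}) in a natural way. Beyond that the argument is pure substitution and no further operator-theoretic ingredient is needed on top of Proposition \ref{p31}.
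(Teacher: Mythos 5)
Your proof is correct and takes essentially the same route as the paper: both arguments substitute the a priori choice (\ref{f31}) into the bounds of Proposition \ref{p31} via the identity $E\sqrt{\sigma_n^{-1}}\,h(\sigma_n^{-1}/m^2)=\delta$ (the paper states this as $E\sqrt{\sigma_n^{-1}}=\delta\,w(\sigma_n^{-1}/m^2)$), and both absorb the mismatch between $m$ and $M$ in the high order case by a monotonicity argument producing the factor $M/m$. Your step $\psi_p(\eta)\le\psi_p(\tau)$ is exactly the paper's use of the increasing behaviour of $\sqrt{t}\,h(t)$ (Remark \ref{r24}(i)) evaluated at the two points $\sigma_n^{-1}/M^2\le\sigma_n^{-1}/m^2$, just unwound into explicit variables.
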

\begin{proof}
Let us prove estimate (\ref{f36}) for the high order case (i). The
{\it a priori} parameter choice (\ref{f31}) can be written in the
equivalent form
\begin{equation}\label{f38}
E \sqrt{\sigma_n^{-1}} = \delta w(\sigma_n^{-1}/m^2) .
\end{equation}
Hence, by (\ref{f32}) and (\ref{f38}),
\begin{equation}\label{f39}
\| Ax_n^\delta - Ax^\dagger\| \le \delta + \delta
h(\sigma_n^{-1}/M^2) w(\sigma_n^{-1}/m^2) .
\end{equation}
Since $\sigma_n^{-1}/M^2 \le \sigma_n^{-1}/m^2$ and since $\sqrt{t}
h(t)$ is increasing we have
\[
h(\sigma_n^{-1}/M^2) \le \frac{M}{m} h(\sigma_n^{-1}/m^2).
\]
From this estimate and (\ref{f39}) we obtain (\ref{f36}). The proof
for the estimate (\ref{f37}) for the low order case (ii) is
analogous.
\end{proof}

\subsection{Error bounds in $X_p$}

We start by providing error bounds with respect to the $\| \cdot
\|_p $ -- norm for arbitrary $\sigma_n
> 0$.

\begin{proposition}\label{p34}
Let $x_n^\delta$ be defined by (\ref{f21}),  $h$ and $w$ be defined
by (\ref{f213}) and assume the link condition A1 and the solution
smoothness A2.
\begin{enumerate}
\item[(i)] High order regularization ($s \ge p$): If $w^2:=1/h^2$ is
operator monotone,
\begin{equation}\label{f310}
\| x_n^\delta - x^\dagger\|_p \le \delta \sqrt{\sigma_n} w
(\sigma_n^{-1}/m^2) + E \cdot M/m  .
\end{equation}
\item[(ii)] Low order regularization ($s \le p$): If $h^2$ is operator
monotone and if $h(t)/\sqrt{t}$ is decreasing,
\begin{equation}\label{f311}
\| x_n^\delta - x^\dagger\|_p \le \delta \sqrt{\sigma_n}
w(\sigma_n^{-1}/M^2) + E \cdot M/m .
\end{equation}
\end{enumerate}
\end{proposition}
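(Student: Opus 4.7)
\emph{Plan.} I would split the total error by the triangle inequality
\[
\|x_n^\delta - x^\dagger\|_p \le \|x_n^\delta - x_n\|_p + \|x_n - x^\dagger\|_p,
\]
and control the two pieces separately starting from the error representations (\ref{f29}) and (\ref{f210}). The outer factor $G^{s-p}$ appearing in both expressions is to be converted into a function of $T^*T$ via Proposition \ref{p25}: in case (i) I would use (\ref{f216}) (which requires $w^2$ operator monotone and link A1(i)), and in case (ii) I would use (\ref{f217}) (which requires $h^2$ operator monotone and link A1(ii)).

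\emph{Noise amplification.} After this conversion, the standard spectral estimate $\|f(T^*T) T^* u\| \le \sup_\lambda |f(\lambda)|\sqrt{\lambda}\,\|u\|$ gives
\[
\|x_n^\delta - x_n\|_p \le \delta \sup_{\lambda \in (0, \|T\|^2]} w(\lambda/c^2)\, \sqrt{\lambda}\, g_n(\lambda),
\]
with $c = m$ in case (i) and $c = M$ in case (ii). Following the strategy suggested in the remark after Proposition \ref{p25} I would estimate this supremum in the two $\lambda$-ranges: on $\lambda \le \sigma_n^{-1}$ via $g_n \le \sigma_n$ from (\ref{f22}), combined with the monotonicity of $w$ itself (non-decreasing in case (i)) or of $\sqrt{t}\,w(t)$ (non-decreasing in case (ii), which is precisely the hypothesis that $h(t)/\sqrt{t}$ be decreasing); and on $\lambda \ge \sigma_n^{-1}$ via $\lambda g_n \le 1$ from (\ref{f23}), combined with the non-increase of $w(t)/\sqrt{t}$ that always holds by Remark \ref{r24}(i). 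Both ranges lead to the bound $\sqrt{\sigma_n}\,w(\sigma_n^{-1}/c^2)$, matching the first terms of (\ref{f310}) and (\ref{f311}).

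\emph{Regularization error.} Starting again from (\ref{f210}) and applying the same swap, I would insert the identity $h(T^*T/c'^2)\,w(T^*T/c'^2) = 1$ (with $c' = M$ in case (i) and $c' = m$ in case (ii)) between $r_n(T^*T)$ and $G^{-s}(x^\dagger - x_0)$. Since every factor on the left is a function of $T^*T$ and therefore commutes with all others, this leaves
\[
\|x_n - x^\dagger\|_p \le \|w(T^*T/c^2)\,h(T^*T/c'^2)\,r_n(T^*T)\| \cdot \|w(T^*T/c'^2) G^{-s}(x^\dagger - x_0)\|,
\]
and the second norm is bounded by $E$ via (\ref{f214}) in case (i) or (\ref{f215}) in case (ii) combined with Assumption A2.

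\emph{Main obstacle.} The crux is to show that the remaining operator norm is at most $M/m$, independently of $\sigma_n$. Using $r_n \le 1$ and $h = 1/w$, this reduces to $\sup_\lambda w(\lambda/m^2)/w(\lambda/M^2) \le M/m$ in case (i), with $m$ and $M$ swapped in case (ii). Both estimates follow from the monotonicity information already collected: in case (i), applying $w(t)/\sqrt{t}$ non-increasing at $t = \lambda/m^2 \ge \lambda/M^2$ yields $w(\lambda/m^2) \cdot m/\sqrt{\lambda} \le w(\lambda/M^2) \cdot M/\sqrt{\lambda}$, and rearranging gives the claim; case (ii) is symmetric, using instead $\sqrt{t}\,w(t)$ non-decreasing at $t = \lambda/M^2 \le \lambda/m^2$. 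Combining the bounds on the noise and regularization parts then delivers (\ref{f310}) and (\ref{f311}).
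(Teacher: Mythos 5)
Your proposal is correct and follows essentially the same route as the paper's own proof: the same splitting via (\ref{f29})--(\ref{f210}), the same use of (\ref{f216})/(\ref{f217}) and (\ref{f214})/(\ref{f215}) from Proposition \ref{p25}, the same case distinction $\lambda \le \sigma_n^{-1}$ versus $\lambda \ge \sigma_n^{-1}$ with (\ref{f22}) and (\ref{f23}), and the same monotonicity argument reducing the remaining operator norm to $M/m$. The only cosmetic deviation is that for the noise term in case (i) you invoke the monotonicity of $w$ itself rather than of $\sqrt{t}\,w(t)$; both are available there and yield the identical bound.
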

\begin{proof}
Let us consider the high order case (i). For estimating $\|
x_n^\delta - x_n\|_p$ we use the error representation (\ref{f29})
and obtain due to the estimate (\ref{f216}) of Proposition \ref{p25}
the estimate
\begin{equation}\label{f312}
\| x_n^\delta - x_n\|_p = \| G^{s-p} g_n (T^*T) T^* (y^\delta - y)
\| \le \delta \left \| w \left ( \frac{1}{m^2} T^*T \right ) g_n
(T^*T) T^* \right \| .
\end{equation}
For estimating the norm term in (\ref{f312}) we distinguish two
cases $\lambda \le \sigma_n^{-1}$ and $\lambda \ge \sigma_n^{-1}$.
In the first case $\lambda \le \sigma_n^{-1}$ we use $g_n (\lambda)
\le \sigma_n$, see (\ref{f22}), exploit the increasing behavior of
$\sqrt{t} w(t)$ that follows since due to Remark \ref{r24} (ii) the
function $h(t) / \sqrt{t}$ is decreasing and obtain
\[
w (\lambda/m^2) g_n (\lambda) \sqrt{\lambda} \le w (\lambda/m^2)
\sqrt{\lambda} \, \sigma_n \le w(\sigma_n^{-1}/m^2) \sqrt{\sigma_n}
.
\]
In the second case $\lambda \ge \sigma_n^{-1}$ we use $\lambda g_n
(\lambda) \le 1$, exploit the decreasing behavior of $w(t)/\sqrt{t}$
that follows since due to Remark \ref{r24} (i) the function
$\sqrt{t} h(t)$ is increasing and obtain
\[
w (\lambda/m^2) g_n (\lambda) \sqrt{\lambda} \le w
(\lambda/m^2)/\sqrt{\lambda} \le w(\sigma_n^{-1}/m^2)
\sqrt{\sigma_n} .
\]
From the both cases we obtain that (\ref{f312}) attains the form
\begin{equation}\label{f313}
\| x_n^\delta - x_n\|_p \le \delta \sqrt{\sigma_n}
w(\sigma_n^{-1}/m^2) .
\end{equation}
For estimating $\| x_n - x^\dagger\|_p$ in the high order case (i)
we use the error representation (\ref{f210}), exploit the estimate
(\ref{f214}) of Proposition \ref{p25}, use in addition the estimate
(\ref{f216}) and obtain due to $\|x^\dagger - x_0\|_p \le E$ and
$g_n (\lambda) \ge 0$, or equivalently $r_n (\lambda) \le 1$, the
estimate
\begin{eqnarray}\label{f314}
\| x^\dagger - x_n \|_p & = & \| G^{s-p} r_n (T^*T) G^{-s}
(x^\dagger - x_0) \| \nonumber \\
& \le & \left \| w \left ( T^*T / m^2 \right ) r_n (T^*T)
G^{-s} ( x^\dagger - x_0)  \right \| \nonumber \\
& \le & E \left \| w \left (  T^*T /m^2 \right ) r_n (T^*T) h \left
( T^*T /M^2 \right ) \right \|
\nonumber \\
& \le & E \sup_\lambda | h(\lambda/M^2) w (\lambda/m^2) | .
\end{eqnarray}
Due to Remark \ref{r24} (i) the function $\sqrt{t} h(t)$ is
increasing, or equivalently, $w(t)/\sqrt{t}$ is decreasing. Hence,
from $\lambda/M^2 \le \lambda/m^2$  we have $w(\lambda/m^2) \le
\frac{M}{m} w(\lambda/M^2)$, and (\ref{f314}) attains the form
\begin{equation}\label{f315}
\| x^\dagger - x_n \|_p \le   E \cdot M/m  .
\end{equation}
From (\ref{f313}) and (\ref{f315}) we obtain (\ref{f310}).
For the proof of part (ii) we proceed analogously by exploiting
(\ref{f215}) and (\ref{f217}) instead of (\ref{f214}) and
(\ref{f216}).
\end{proof}

From Proposition \ref{p34} we have along the line of Corollary
\ref{c33} the following

\begin{corollary}\label{c35}
Let be satisfied the assumptions of Proposition \ref{p34} and let
$\sigma_n$ be chosen by (\ref{f31}). Then, in the both cases (i) and
(ii) of Proposition \ref{p34} we have
\begin{eqnarray}
&& \mbox{\rm (i)} \qquad \| x_n^\delta - x^\dagger\|_p \le E \cdot
\left ( 1 + M/m \right ) , \label{f316}\\[0.5ex]
&& \mbox{\rm (ii)} \qquad \| x_n^\delta - x^\dagger\|_p \le  2E
\cdot M/m . \label{f317}
\end{eqnarray}
\end{corollary}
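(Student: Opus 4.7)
The plan is to mirror the proof of Corollary \ref{c33}, starting from the bounds in Proposition \ref{p34} and substituting the a priori parameter choice. The central observation, already exploited in the proof of Corollary \ref{c33}, is that (\ref{f31}) admits the equivalent form
\begin{equation*}
E\sqrt{\sigma_n^{-1}} = \delta\, w(\sigma_n^{-1}/m^2).
\end{equation*}
This identity comes from setting $u=\psi_p^{-1}(\delta/(mE))$ so that $\psi_p(u)=\delta/(mE)$, and then computing $\sqrt{\sigma_n^{-1}}/m = (\delta/(mE))\,u^{s-p} = u^p\varrho(u)\,u^{s-p} = \psi_s(u)$, whence $w(\sigma_n^{-1}/m^2) = [\psi_s^{-1}(\sqrt{\sigma_n^{-1}}/m)]^{s-p} = u^{s-p}$.

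For the high order case (i), plug the identity directly into the bound of Proposition \ref{p34} (i). The noise amplification term collapses:
\[
\delta\sqrt{\sigma_n}\,w(\sigma_n^{-1}/m^2) \;=\; \sqrt{\sigma_n}\cdot E\sqrt{\sigma_n^{-1}} \;=\; E,
\]
and adding the regularization contribution $E\cdot M/m$ yields (\ref{f316}).

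For the low order case (ii), the bound in Proposition \ref{p34} (ii) involves $w(\sigma_n^{-1}/M^2)$ rather than $w(\sigma_n^{-1}/m^2)$, so an extra monotonicity step is required. The assumption in Proposition \ref{p34} (ii) that $h(t)/\sqrt{t}$ is decreasing is equivalent to $\sqrt{t}\,w(t)$ being increasing; since $m\le M$ gives $\sigma_n^{-1}/M^2 \le \sigma_n^{-1}/m^2$, this monotonicity yields
\[
\frac{w(\sigma_n^{-1}/M^2)}{M} \;\le\; \frac{w(\sigma_n^{-1}/m^2)}{m}, \qquad\text{i.e.,}\qquad w(\sigma_n^{-1}/M^2) \le (M/m)\,w(\sigma_n^{-1}/m^2).
\]
Inserting this into Proposition \ref{p34} (ii) and then applying the identity as in case (i) bounds the noise amplification term by $E\cdot M/m$; adding the regularization contribution $E\cdot M/m$ gives (\ref{f317}).

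No real obstacle is anticipated: the argument is essentially a one-line substitution once the equivalent form of the parameter choice is recalled, and the only point to watch is the $m$ versus $M$ discrepancy in case (ii), which is handled by invoking the appropriate monotonicity from Remark \ref{r24}.
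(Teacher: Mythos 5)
Your proof is correct and follows exactly the route the paper intends: the paper gives no separate argument for Corollary \ref{c35} but states it follows ``along the line of Corollary \ref{c33}'', which is precisely your substitution of the equivalent form $E\sqrt{\sigma_n^{-1}}=\delta\,w(\sigma_n^{-1}/m^2)$ of (\ref{f31}) into Proposition \ref{p34}, together with the monotonicity of $\sqrt{t}\,w(t)$ to convert $w(\sigma_n^{-1}/M^2)$ into $(M/m)\,w(\sigma_n^{-1}/m^2)$ in the low order case. Both the derivation of the equivalent form and the handling of the $m$ versus $M$ arguments are accurate.
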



\subsection{Error bounds in $X$}

For deriving order optimal error bounds for the total error
$\|x_n^\delta - x^\dagger\|$ with $\sigma_n$ chosen a priori by
(\ref{f31}) we employ interpolation techniques from Proposition
\ref{p27} and use the results of Corollary \ref{c35} which provides
a bound for $\| x_n^\delta - x^\dagger \|_p$ and the results of
Corollary \ref{c33} which together with the first link condition
A1(i) provides a bound for $\| \rho (G) (x_n^\delta - x^\dagger)
\|$.

\begin{theorem}\label{t36}
Let be satisfied the assumptions of Proposition \ref{p31} and
\ref{p34} and let $\sigma_n$ be chosen a priori by (\ref{f31}). If
the function  $\xi_p(t):= \psi_p^2 (t^{1/(2p)})$ is convex, then
$x_n^\delta$ is order optimal on the set $M_{p,E}$ in the both cases
of high order regularization $ s \ge p$ and low order regularization
$s \le p$. In fact, in both cases,
\begin{equation}\label{f318}
\| x_n^\delta - x^\dagger\| \le c_1   \left [ \psi_p^{-1} \left (
c_2 \delta  \right ) \right ]^p
\end{equation}
with some constants $c_1$ and $c_2$ which can be extracted from the
proof.
\end{theorem}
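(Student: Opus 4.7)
The plan is to derive the error bound by combining the bounds in the stronger norm $\|\cdot\|_p$ (from Corollary \ref{c35}) and in the weaker norm $\|\varrho(G)\cdot\|$ (from Corollary \ref{c33} together with the first link condition A1(i)) via the variable Hilbert scale interpolation result of Proposition \ref{p27}, applied with $r = p$.

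First, I would set $z := x_n^\delta - x^\dagger$ and read off two estimates for $z$. From Corollary \ref{c35}, regardless of whether we are in the high or low order regime, we have the stronger-norm bound
$$\|z\|_p \le C_1 E,$$
where $C_1 = 1 + M/m$ in case (i) and $C_1 = 2M/m$ in case (ii). Next, from Corollary \ref{c33} we have $\|Az\| \le C_2 \delta$ with $C_2 = 1 + M/m$ in case (i) and $C_2 = 2$ in case (ii); combining this with the first link condition A1(i) (namely $m\|\varrho(G)z\| \le \|Az\|$) gives the weaker-norm bound
$$\|\varrho(G) z\| \le \frac{C_2}{m}\,\delta.$$

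Second, with these two estimates in hand, I would invoke Proposition \ref{p27} with $r = p$, $c_1 = C_1 E$ and $c_2 = C_2 \delta / m$. Its hypothesis is precisely the convexity of $\xi_p(t) = \psi_p^2(t^{1/(2p)})$, which is what the theorem assumes. Proposition \ref{p27} then yields
$$\|z\| \le C_1 E \left[\psi_p^{-1}\!\left(\frac{C_2\,\delta}{m\,C_1\,E}\right)\right]^p,$$
which is exactly the claimed bound (\ref{f318}) with explicit constants $c_1 = C_1 E$ and $c_2 = C_2/(m C_1 E)$ depending only on $m$, $M$, $E$ and the case distinction.

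Finally, order optimality follows from the lower bound (\ref{f211}): the benchmark worst-case error on $M_{p,E}$ is of order $E[\psi_p^{-1}(\delta/(ME))]^p$, while our upper bound is of the same form $[\psi_p^{-1}(c\,\delta)]^p$ up to multiplicative constants; since $\psi_p^{-1}$ is monotone, rescaling of the argument by a fixed constant only changes the constant in front. I do not anticipate a serious obstacle here: the main analytic work (operator monotonicity in Propositions \ref{p31} and \ref{p34}, and interpolation in Proposition \ref{p27}) has been carried out already, so Theorem \ref{t36} is a packaging result. The only delicate point worth double-checking is that both the high-order ($s \ge p$) and low-order ($s \le p$) cases really produce bounds of the same structural form so that a single application of Proposition \ref{p27} covers both regimes uniformly.
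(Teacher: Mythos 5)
Your proposal is correct and follows essentially the same route as the paper: the authors likewise combine the $\|\cdot\|_p$ bound from Corollary \ref{c35} with the $\|\varrho(G)\cdot\|$ bound obtained from Corollary \ref{c33} and A1(i), and then apply the interpolation estimate (\ref{f219}) of Proposition \ref{p27} with $r=p$. Your version merely makes explicit the constants $c_1$ and $c_2$ that the paper leaves to be ``extracted from the proof.''
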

\begin{proof}
Due to Corollary \ref{c35}, Corollary \ref{c33} and Assumption
A1(i), in both cases of high- and low order regularization we have
\[
\| x_n^\delta - x^\dagger\|_p \le k_1 \quad \mbox{and} \quad \|
\varrho (G) (x_n^\delta - x^\dagger) \| \le k_2 \delta
\]
with some constants $k_1$ and $k_2$. Using the interpolation
estimate (\ref{f219}) of Proposition \ref{p27} yields (\ref{f318}).
\end{proof}

Note that for the finitely smoothing case of Example \ref{f26} we
have $\xi_p (t) = t^{(a+p)/p}$, which is convex for arbitrary $p>0$.

\subsection{Revisiting the low order case}

The error bounds given in Subsection 3.3 require in both cases of
high order and low order regularization the both link conditions
A1(i) and A1(ii). We will show in this subsection that in the case
of low order regularization $s \le p$  order optimal error bounds
can be obtained without the second link condition A1(ii). However,
this will only be possible for $s \ge 0$.
We exploit in our study the property
\begin{equation}\label{f319}
\sqrt{\lambda} g_n (\lambda) \le \sqrt{\sigma_n} \quad \mbox{for}
\quad \lambda \in \left ( 0, \|T\|^2 \right ]
\end{equation}
which follows from the both properties (\ref{f22}) and (\ref{f23})
of Proposition \ref{p21} and start by providing some error bound in
the $\| \cdot \|_s$ -- norm for arbitrary $\sigma_n > 0$.

\begin{proposition}\label{p37}
Let $x_n^\delta$ be defined by (\ref{f21}), $h$ be defined by
(\ref{f213}) and assume the solution smoothness A2.
If $h^2$ is operator monotone and $h(t)/t$ is decreasing, then under
the link condition A1(i),
\begin{equation}\label{f320}
\| x_n^\delta - x^\dagger\|_s \le  \delta \, \sqrt{\sigma_n} + E
h(\sigma_n^{-1} / m^2)  .
\end{equation}
\end{proposition}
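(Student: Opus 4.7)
The plan is to mimic the strategy of Proposition 3.4, but for the $\|\cdot\|_s$–norm rather than the $\|\cdot\|_p$–norm, so that the simpler exponent $s-s=0$ replaces $s-p$ in the noise term and lets us avoid the second link condition A1(ii). Concretely, I would split
\[
\|x_n^\delta - x^\dagger\|_s \le \|x_n^\delta - x_n\|_s + \|x_n - x^\dagger\|_s
\]
and treat the two terms separately using the error representations (2.9) and (2.10) specialized to $p\mapsto s$.

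For the noise amplification term, (2.9) gives $\|x_n^\delta-x_n\|_s = \|g_n(T^*T)T^*(y^\delta-y)\|$. Since $\|g_n(T^*T)T^*\| = \sup_\lambda \sqrt{\lambda}\,g_n(\lambda)$, the advertised property (3.19) (a direct consequence of (2.22) and (2.23)) yields $\|x_n^\delta-x_n\|_s \le \delta\sqrt{\sigma_n}$, which is the first summand in (3.20). This step is essentially automatic and involves no monotonicity hypothesis.

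For the regularization error, (2.10) gives $\|x_n-x^\dagger\|_s = \|r_n(T^*T) G^{-s}(x^\dagger - x_0)\|$. Using Assumption A2 to write $x^\dagger-x_0 = G^p v$ with $\|v\|\le E$, and inserting the identity $G^{p-s}v = h(T^*T/m^2)\,w(T^*T/m^2)\,G^{p-s}v$, I would apply estimate (2.15) of Proposition 2.5 (which is available precisely under A1(i) and operator monotonicity of $h^2$) to bound $\|w(T^*T/m^2)G^{p-s}v\| \le \|v\|\le E$. This reduces the task to estimating
\[
\bigl\|r_n(T^*T)\,h(T^*T/m^2)\bigr\| \;=\; \sup_{\lambda\in(0,c]} r_n(\lambda)\,h(\lambda/m^2).
\]

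The remaining and most delicate step is the spectral case analysis for this supremum, which is where the two structural hypotheses ``$h^2$ operator monotone'' (giving monotonicity of $h$ itself in the low order regime $s\le p$) and ``$h(t)/t$ decreasing'' enter. For $\lambda\le\sigma_n^{-1}$, I would use $r_n(\lambda)\le 1$ together with monotonicity of $h$ to bound $r_n(\lambda)h(\lambda/m^2)\le h(\sigma_n^{-1}/m^2)$. For $\lambda\ge\sigma_n^{-1}$, I would use property (2.24), $\lambda r_n(\lambda)\le\sigma_n^{-1}$, to write
\[
r_n(\lambda)\,h(\lambda/m^2) \;\le\; \frac{\sigma_n^{-1}}{\lambda}\,h(\lambda/m^2) \;=\; \frac{\sigma_n^{-1}}{m^2}\cdot\frac{h(\lambda/m^2)}{\lambda/m^2},
\]
and then exploit that $h(t)/t$ is decreasing with $\lambda/m^2\ge\sigma_n^{-1}/m^2$ to replace the last ratio by $h(\sigma_n^{-1}/m^2)/(\sigma_n^{-1}/m^2)$, which again collapses to $h(\sigma_n^{-1}/m^2)$. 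Combining the two regimes gives the uniform bound $E\,h(\sigma_n^{-1}/m^2)$ on the regularization error, and adding it to $\delta\sqrt{\sigma_n}$ yields (3.20). The main technical point to watch is matching the scaling $\lambda/m^2$ consistently throughout the case analysis so that both cases produce the same endpoint value $h(\sigma_n^{-1}/m^2)$; this is what forces the use of the ``$h(t)/t$ decreasing'' assumption rather than the weaker ``$h(t)/\sqrt{t}$ decreasing'' used in Proposition 3.1.
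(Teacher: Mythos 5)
Your proposal is correct and follows essentially the same route as the paper: the noise term is bounded by $\delta\sqrt{\sigma_n}$ via (3.19), the regularization error is reduced to $E\,\|r_n(T^*T)h(T^*T/m^2)\|$ by factoring through $w(T^*T/m^2)$ and applying (2.15), and the supremum is handled by the same two-regime spectral analysis using $r_n(\lambda)\le 1$ with $h$ increasing on $\lambda\le\sigma_n^{-1}$ and $\lambda r_n(\lambda)\le\sigma_n^{-1}$ with $h(t)/t$ decreasing on $\lambda\ge\sigma_n^{-1}$. The details, including the $\lambda/m^2$ scaling, match the paper's proof.
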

\begin{proof}
For estimating $\| x_n^\delta - x_n\|_s$ in the low order case $s
\le p$ we use the error representation (\ref{f26}) and obtain due to
$\sqrt{\lambda} g_\alpha (\lambda) \le \sqrt{\sigma_n}$, see
(\ref{f319}), the estimate
\begin{equation}\label{f321}
\| x_n^\delta - x_n\|_s = \| g_n (T^*T) T^* (y^\delta - y) \| \le
\delta \,  \sqrt{\sigma_n} .
\end{equation}
For estimating $\| x_n - x^\dagger\|_s$  we use the error
representation (\ref{f26}), exploit the estimate (\ref{f215}) of
Proposition \ref{p25} and obtain due to $\|x^\dagger - x_0\|_p \le
E$ the estimate
\begin{equation}\label{f322}
\| x^\dagger - x_n \|_s = \| r_n (T^*T) G^{-s} (x^\dagger - x_0) \|
\le E \left \|r_n (T^*T)  h \left ( T^*T / m^2 \right ) \right \| .
\end{equation}
For estimating the norm term in (\ref{f322}) we distinguish two
cases $\lambda \le \sigma_n^{-1}$ and $\lambda \ge \sigma_n^{-1}$.
In the first case $\lambda \le \sigma_n^{-1}$ we use $r_n (\lambda)
\le 1$, or equivalently, $g_n (\lambda) \ge 0$ exploit the
increasing behavior of $h(t)$ which is always satisfied since $h^2$
is operator monotone and obtain
\[
r_n (\lambda) h(\lambda/m^2) \le h(\lambda/m^2) \le
h(\sigma_n^{-1}/m^2 ) .
\]
In the second case $\lambda \ge \sigma_n^{-1}$ we use $\lambda r_n
(\lambda)  \le \sigma_n^{-1}$, exploit the decreasing behavior of
$h(t)/t$ and obtain
\[
r_n (\lambda)  h(\lambda/m^2) \le \sigma_n^{-1} h(\lambda/m^2 ) /
\lambda \le h(\sigma_n^{-1}/m^2)  .
\]
From the both cases we obtain that (\ref{f322}) attains the form
$\| x^\dagger - x_n \|_s \le E h(\sigma_n^{-1}/m^2)$. From this
estimate and (\ref{f321}) we obtain (\ref{f320}).
\end{proof}

Since the parameter choice (\ref{f31}) can be written in the
equivalent form $\delta \sqrt{\sigma_n} = E h(\sigma_n^{-1}/m^2)$ we
obtain from Proposition \ref{p37}  the following

\begin{corollary}\label{c38}
Let be satisfied the assumptions of Proposition \ref{p37} and let
$\sigma_n$ be chosen a priori by (\ref{f31}). Then,
\begin{equation}\label{f323}
 \| x_n^\delta - x^\dagger\|_s \le  2 E \left [ \psi_p^{-1} \left (
 \frac{\delta}{m E} \right ) \right ]^{p-s} .
\end{equation}
\end{corollary}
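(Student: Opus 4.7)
The plan is to show, as the text hints, that the \textit{a priori} parameter choice (\ref{f31}) is equivalent to the equation $\delta\sqrt{\sigma_n}=E\,h(\sigma_n^{-1}/m^2)$, so that both summands in the error bound (\ref{f320}) of Proposition \ref{p37} reduce to the same quantity, namely $E[\psi_p^{-1}(\delta/(mE))]^{p-s}$. Adding them then yields the factor $2$ and gives (\ref{f323}).

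To carry this out, I would first set $t := \psi_p^{-1}\!\left(\delta/(mE)\right)$, so that by definition of $\psi_p$ we have $t^p\varrho(t)=\delta/(mE)$, hence $\psi_p^2(t)=\delta^2/(mE)^2$. Rewriting (\ref{f31}) as
\[
\sigma_n^{-1}=\frac{\delta^2}{E^2}\,t^{2(s-p)}
=m^2\psi_p^2(t)\,t^{2(s-p)}
=m^2\,t^{2s}\varrho^2(t)
=m^2\psi_s^2(t),
\]
yields $\sqrt{\sigma_n^{-1}}/m=\psi_s(t)$, i.e.\ $\psi_s^{-1}(\sqrt{\sigma_n^{-1}}/m)=t$. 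Substituting into the definition (\ref{f213}) of $h$ gives
\[
h(\sigma_n^{-1}/m^2)=\bigl[\psi_s^{-1}(\sqrt{\sigma_n^{-1}}/m)\bigr]^{p-s}=t^{p-s}=\bigl[\psi_p^{-1}(\delta/(mE))\bigr]^{p-s}.
\]
A parallel computation gives $\delta\sqrt{\sigma_n}=\delta/\sqrt{\sigma_n^{-1}}=\delta/(m\psi_s(t))$, and since $m\psi_s(t)=mt^s\varrho(t)=m\varrho(t)t^s$ and $\delta=mE\,t^p\varrho(t)$, this simplifies to $E\,t^{p-s}$, i.e.\ $\delta\sqrt{\sigma_n}=E\bigl[\psi_p^{-1}(\delta/(mE))\bigr]^{p-s}$, confirming the equivalence.

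Having established that both terms on the right-hand side of (\ref{f320}) coincide with $E\bigl[\psi_p^{-1}(\delta/(mE))\bigr]^{p-s}$, applying Proposition \ref{p37} delivers the claimed bound (\ref{f323}). There is no substantive obstacle here; the proof is essentially an algebraic identity built into the parameter choice, and the only thing to be careful about is keeping track of the constants $m$ and $M$ in the arguments of $\psi_s^{-1}$ versus $\psi_p^{-1}$. In particular, the proof does not invoke the second link condition A1(ii), consistent with the remark preceding the corollary that in the low order case $0\le s\le p$ the second link condition is not required.
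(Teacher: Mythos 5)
Your proposal is correct and follows essentially the same route as the paper: both arguments reduce to showing that the a priori choice (\ref{f31}) is equivalent to $\delta\sqrt{\sigma_n}=E\,h(\sigma_n^{-1}/m^2)$ via the identity $\psi_s^{-1}(\sigma_n^{-1/2}/m)=\psi_p^{-1}(\delta/(mE))$, so that the two terms in (\ref{f320}) coincide and sum to the stated bound. Your version merely streamlines the algebra by substituting $t=\psi_p^{-1}(\delta/(mE))$ at the outset instead of first deriving the relations (\ref{f324}) explicitly; the content is identical.
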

\begin{proof}
From (\ref{f212}) we have $\varrho (\lambda) = \lambda^{-s} \psi_s
(\lambda)$ and $\varrho (\lambda) = \lambda^{-p} \psi_p (\lambda)$.
Consequently,
\begin{equation}\label{f324}
\psi_s (\lambda) = \lambda^{s-p} \psi_p (\lambda) \quad \mbox{and}
\quad \psi_p (\lambda) = \lambda^{p-s} \psi_s (\lambda) .
\end{equation}
We use the first equation of (\ref{f324}), substitute $\lambda=
\psi_p^{-1} (t)$ and obtain $\psi_s ( \psi_p^{-1} (t) ) = t [
\psi_p^{-1} (t) ]^{s-p}$. From this equation we conclude that the
{\it a priori} parameter choice (\ref{f31}), which is equivalent to
$ \sigma_n^{-1/2}/m = \frac{\delta}{m E} \left [ \psi_p^{-1} \left (
\frac{\delta}{m E} \right ) \right ]^{s-p}$,
can be rewritten as $\sigma_n^{-1/2} /m = \psi_s \left ( \psi_p^{-1}
\left ( \frac{\delta}{mE}\right ) \right ) $, or equivalently,
\begin{equation}\label{f325}
\psi_s^{-1} \left ( \frac{\sigma_n^{-1/2}}{m} \right ) =\psi_p^{-1}
\left ( \frac{\delta}{m E} \right ) .
\end{equation}
Clearly, (\ref{f325}) is equivalent to $\psi_p \left ( \psi_s^{-1}
\left ( \sigma_n^{-1/2}/m \right ) \right ) = \frac{\delta}{mE}$. We
use the second equation of (\ref{f324}) and write this equation in
the form
\[
\left [ \psi_s^{-1} \left ( \frac{\sigma_n^{-1/2}}{m} \right )
\right ]^{p-s}  \frac{\sigma_n^{-1/2}}{m} = \frac{\delta}{m E}.
\]
From this equation and the definition of $h$ by (\ref{f213}) we see
that the parameter choice (\ref{f31}) can be written in the
equivalent form $\delta \sqrt{\sigma_n} = E h(\sigma_n^{-1}/m^2)$.
Due to this equation, estimate (\ref{f320}) of Proposition \ref{p37}
attains the form
\begin{equation}\label{f326}
\| x_n^\delta - x^\dagger\|_s \le   2 \delta \, \sqrt{\sigma_n}  .
\end{equation}
From this estimate and the parameter choice (\ref{f31}) we obtain
(\ref{f323}).
\end{proof}

Now, by using the both estimates (\ref{f323}) and (\ref{f37}), we
obtain the following order optimality result for $x_n^\delta$ on the
set $M_{p,E}$.

\begin{theorem}\label{t39}
Let $x_n^\delta$ be defined by (\ref{f21}) with $\sigma_n$ chosen by
(\ref{f31}), assume the link condition A1(i) and the solution
smoothness A2.
If  $h^2$ is operator monotone, $h(t)/\sqrt{t}$ is decreasing and
$\xi_s(t):= \psi_s^2 (t^{1/(2s)})$ is convex, then
\begin{equation}\label{f327}
\| x_n^\delta - x^\dagger\| \le 2 E  \left [ \psi_p^{-1} \left (
\frac{\delta}{m E} \right ) \right ]^p .
\end{equation}
\end{theorem}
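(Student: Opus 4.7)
The plan is to combine the two bounds already at hand, one in the strong norm $\|\cdot\|_s$ and one in the weak norm $\|\varrho(G)\cdot\|$, via the variable Hilbert scale interpolation of Proposition \ref{p27}. Concretely, Corollary \ref{c38} directly gives the strong bound
\[
\| x_n^\delta - x^\dagger\|_s \le 2E\left[\psi_p^{-1}\left(\tfrac{\delta}{mE}\right)\right]^{p-s},
\]
while the low-order estimate (\ref{f37}) of Corollary \ref{c33} combined with the link condition A1(i) in the form $m\|\varrho(G)z\|\le\|Az\|$ gives
\[
\|\varrho(G)(x_n^\delta - x^\dagger)\| \le \tfrac{2\delta}{m}.
\]
Note that the assumptions of Proposition \ref{p31}(ii) and \ref{p37} needed for these two bounds reduce, under A1(i), to the hypotheses of Theorem \ref{t39}, since $h^2$ operator monotone plus $h(t)/\sqrt{t}$ decreasing implies $h(t)/t$ decreasing as well.

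Next I would apply Proposition \ref{p27} with $r=s$ (which requires $s\ge 0$, the restriction noted at the start of the subsection), with $c_1 = 2E[\psi_p^{-1}(\delta/(mE))]^{p-s}$ and $c_2 = 2\delta/m$. The convexity hypothesis of that proposition is exactly the assumption that $\xi_s(t) = \psi_s^2(t^{1/(2s)})$ is convex, so it applies and yields
\[
\|x_n^\delta - x^\dagger\| \le c_1 \left[\psi_s^{-1}\!\left(\tfrac{c_2}{c_1}\right)\right]^{s}.
\]

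What remains is the algebraic identity reducing $\psi_s^{-1}(c_2/c_1)$ to $\psi_p^{-1}(\delta/(mE))$. Writing $\tau := \psi_p^{-1}(\delta/(mE))$, so that $\delta/(mE) = \tau^p\varrho(\tau)$, one computes
\[
\frac{c_2}{c_1} = \frac{\delta/(mE)}{\tau^{p-s}} = \tau^{s}\varrho(\tau) = \psi_s(\tau),
\]
hence $\psi_s^{-1}(c_2/c_1) = \tau$. Substituting back gives $\|x_n^\delta-x^\dagger\| \le 2E\,\tau^{p-s}\cdot \tau^{s} = 2E[\psi_p^{-1}(\delta/(mE))]^{p}$, which is precisely (\ref{f327}).

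The proof is essentially a bookkeeping exercise: no new analytical tool is needed beyond what was developed in Sections 2 and 3. The only point requiring care is verifying that the hypotheses of Proposition \ref{p27}, Corollary \ref{c33}(ii), and Corollary \ref{c38} are simultaneously met; once they are, the computation of $c_2/c_1$ collapses cleanly thanks to the identities $\psi_s(\lambda) = \lambda^{s-p}\psi_p(\lambda)$ exploited in the proof of Corollary \ref{c38}. I do not foresee a genuine obstacle; the only subtlety is the restriction $s\ge 0$ built into the applicability of the interpolation inequality.
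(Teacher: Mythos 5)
Your proposal is correct and follows essentially the same route as the paper: both combine the $\|\cdot\|_s$-bound (the paper uses (\ref{f326}), you use its equivalent form (\ref{f323}) from Corollary \ref{c38}) with the bound $\|\varrho(G)(x_n^\delta-x^\dagger)\|\le 2\delta/m$ from (\ref{f37}) and A1(i), then apply Proposition \ref{p27} with $r=s$ and collapse the result via the identity $\psi_s(\lambda)=\lambda^{s-p}\psi_p(\lambda)$. The only difference is bookkeeping order (the paper keeps $\sigma_n$ explicit and substitutes (\ref{f325}) and (\ref{f31}) at the end), and your observation that $h(t)/\sqrt{t}$ decreasing implies $h(t)/t$ decreasing correctly secures the hypotheses of Proposition \ref{p37}.
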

\begin{proof}
From estimate (\ref{f37}) and Assumption A1(i) we have the estimate
\begin{equation}\label{f328}
\| \varrho (G) (x_n^\delta - x^\dagger ) \| \le 2\delta/m .
\end{equation}
We apply the interpolation estimate (\ref{f219}) of Proposition
\ref{p27} and obtain together with (\ref{f326}) and (\ref{f328}) the
estimate
\begin{equation}\label{f329}
\| x_n^\delta - x^\dagger \| \le 2 \delta \sigma_n^{1/2} \left [
\psi_s^{-1} \left ( \sigma_n^{-1/2}/m \right ) \right ]^s .
\end{equation}
It remains to show that for the parameter choice (\ref{f31}) both
right hand sides of (\ref{f327}) and (\ref{f329}) coincide. We use
formula (3.25) and obtain that (\ref{f329}) can be written in the
equivalent form
\begin{equation}\label{f330}
\| x_n^\delta - x^\dagger \| \le 2 \delta \sigma_n^{1/2} \left [
\psi_p^{-1} \left ( \frac{\delta}{mE} \right ) \right ]^s .
\end{equation}
Now we rewrite (3.1) as $\sigma_n^{1/2} = \frac{E}{\delta} \left [
\psi_p^{-1} \left ( \frac{\delta}{mE} \right ) \right ]^{p-s}$,
substitute this into (\ref{f330}) and obtain (\ref{f327}).
\end{proof}

\section{Discrepancy principle}
\setcounter{equation}{0}
If the constants $m$ and $E$ in the {\it a priori} parameter choice
(\ref{f31}) are unknown, then the parameter choice
$ \sigma_n^{-1} = \left ( \frac{\delta}{c_2} \right )^2 \left [
\psi_p^{-1} \left ( \frac{\delta}{c_1 c_2} \right ) \right
]^{2(s-p)} $
may be used where $c_1$ and $c_2$ are positive constants guessing
$m$ and $E$, respectively. For this parameter choice, the order
optimality results of Theorems \ref{t36} and \ref{t39} still hold
true. In case of rough estimates for $m$ and $E$, and in particular
in cases where $p$ and $\psi_p$ are unknown, {\it a posteriori}
rules for choosing $ \sigma_n$ have to be used. In the discrepancy
principle (see \cite{mo66}) the regularization parameter $\sigma_n$
is chosen as the solution of the nonlinear equation
\begin{equation}\label{f41}
d (\sigma_n):= \| Ax_n^\delta - y^\delta \| = C \delta
\end{equation}
with some constant $C \ge  1$. For practical reasons it makes sense
to choose $\sigma_n$ such that
\begin{equation}\label{f42}
C_1 \delta \le d (\sigma_n) \le C_2 \delta
\end{equation}
with some constants $C_1$, $C_2$ that obey  $1 \le C_1 \le C_2$. In
computations it makes sense to choose $C_2$ with $C_2 > C_1$.

\begin{remark}\label{r41}
For realizing the discrepancy principle (\ref{f41}) or (\ref{f42})
approximately, one practical way is as follows. We start with some
large $\alpha_1$ in (\ref{f12}), use a decreasing $\alpha$-sequence
and iterate as long as the discrepancy is in the magnitude of the
noise level. More accurately, we consider the decreasing sequence
$\Delta = \{ \alpha_k \}_{k=1}^\infty$ and choose $n$ as the first
integer for which
\begin{equation}\label{f43}
\| A x_n^\delta - y^\delta \| \le C \delta < \| A x_k^\delta -
y^\delta \| , \quad 0 \le k < n
\end{equation}
with some $C > 1$. Some care is required for the final iteration
step in which one has to take care that the discrepancy becomes not
too small and remains in the magnitude of $\delta$. This can be
guaranteed by assuming that the final $\alpha_n$ is not too small
and obeys
\begin{equation}\label{f44}
1/\alpha_n \le c \sigma_{n-1}
\end{equation}
with some positive constant $c$. For the geometric sequence $\Delta
=  \{ q^{k-1} \alpha_1  \}_{k=1}^\infty$ with some $q < 1$,
assumption (\ref{f44}) is satisfied with $c=1/q$, see \cite{hg98}.
We show in Subsection 4.3 that for the version (\ref{f43}) of the
discrepancy principle, analogous convergence rate results to that of
the {\it a posteriori} rule (\ref{f42}) hold true.
\end{remark}
\subsection{Properties}
Due to (\ref{f21}), the discrepancy $y^\delta - A x_n^\delta$ can be
represented by
\begin{equation}\label{f45}
y^\delta - A x_n^\delta = r_n (TT^*) (y^\delta - A x_0 ) = \left (
\prod_{k=1}^n \alpha_k (TT^* + \alpha_k I)^{-1} \right ) (y^\delta -
A x_0 ) .
\end{equation}
From this representation we conclude that the discrepancy is
monotonically decreasing with respect to the iteration number, that
is,
\[
\|y^\delta -  A x_k^\delta \| < \|y^\delta -  A x_{k-1}^\delta \|,
\quad k=1,2,...
\]
For $\sigma_n \to \infty$ we have $r_n (\lambda) \to 0$, and for
$\sigma_n \to 0$ we have $r_n (\lambda) \to 1$. Therefore, by
(\ref{f45}), we have the two limit relations
\[
\lim_{\sigma_n \to \infty} \|y^\delta -  A x_n^\delta \| = 0 \quad
\mbox{and} \quad \lim_{\sigma_n \to 0} \|y^\delta -  A x_n^\delta \|
= \|y^\delta -  A x_0\|.
\]
From both limit relations we conclude that under the condition $\|
y^\delta - A x_0\| > C \delta$ there exists $\sigma_n$ (not
necessarily unique) that obeys  rule (\ref{f41}) or rule
(\ref{f43}), respectively,
and that under the condition $\| y^\delta - A x_0\| > C_2 \delta$
there exists $\sigma_n$  that obeys rule (\ref{f42}).

Now we assume that for some given $\sigma_{n-1}$ we have $\| A
x_{n-1}^\delta - y^\delta \| > C \delta$. Then, the discrepancy
$d(\alpha_n) := \| A x_{n}^\delta - y^\delta \|$ as a function of
$\alpha_n$ possesses following properties:
\begin{enumerate}
\item[(i)] For $\alpha_n \to 0$ we have the limit relation $\lim_{\alpha_n \to 0} d(\alpha_n) = 0$.
\item[(ii)] For $\alpha_n \to \infty$ we have  $\lim_{\alpha_n \to \infty} d(\alpha_n) = \| A
x_{n-1}^\delta - y^\delta \| > C \delta$.
\item[(iii)] The function $d (\alpha_n)$ is continuous and strictly
monotonically increasing.
\end{enumerate}
As a consequence, there exists $\alpha_n^*$ with $d(\alpha_n^*) = C
\delta$.

Following proposition gives us some monotonicity property for the
error $\| x_n^\delta - x^\dagger\|_s$ with respect to the
$X_s$--norm which tells us that the iteration (\ref{f12}) should not
be stopped as long as $\| A x_n^\delta - y^\delta \| \ge \delta$
holds. In the special case $s=0$, such monotonicity property may be
found in \cite{ht01}.

\begin{proposition}\label{p42}
Let $x^\dagger \in X_s$, let $x_n^\delta$ be defined by the
iteration (\ref{f12}) and let $\| A x_n^\delta - y^\delta \| \ge
\delta$. Then,
\begin{equation}\label{f46}
\| x_n^\delta - x^\dagger\|_s < \| x_{n-1}^\delta - x^\dagger\|_s .
\end{equation}
\end{proposition}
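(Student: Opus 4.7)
The plan is to reduce the scaled iteration to its standard (un-weighted) form and then argue monotonicity by a direct computation. Setting $u_k := G^{-s}x_k^\delta$, $u^\dagger := G^{-s}x^\dagger$, and $T := AG^s$, iteration \eqref{f12} transforms into the classical implicit Lavrentiev iteration
\[
u_k = u_{k-1} - (T^*T + \alpha_k I)^{-1}T^*(Tu_{k-1} - y^\delta),
\]
with $Tu^\dagger = y$, $\|x_k^\delta - x^\dagger\|_s = \|u_k - u^\dagger\|$, and $\|Ax_n^\delta - y^\delta\| = \|Tu_n - y^\delta\|$. The claim thus reduces to the $s = 0$ case for the operator $T$, established in \cite{ht01}; what remains is to verify the key estimate.

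To this end, introduce $w_k := u_k - u^\dagger$, $\rho_k := Tu_k - y^\delta$, $\eta := y^\delta - y$, $R_n := (T^*T + \alpha_n I)^{-1}$, and $\tilde R_n := (TT^* + \alpha_n I)^{-1}$. The iteration gives $w_{n-1} - w_n = R_n T^*\rho_{n-1}$, and as in the derivation of \eqref{f45} one checks $\rho_n = \alpha_n \tilde R_n \rho_{n-1}$. Combining these with the intertwining identity $TR_n = \tilde R_n T$, the polar expansion $\|w_{n-1}\|^2 - \|w_n\|^2 = 2(w_{n-1}-w_n, w_n) + \|w_{n-1}-w_n\|^2$ rearranges to
\[
\|w_{n-1}\|^2 - \|w_n\|^2 = \frac{2}{\alpha_n}(\rho_n, Tw_n) + \|R_n T^*\rho_{n-1}\|^2.
\]

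Since $Tw_n = \rho_n + \eta$, one has $(\rho_n, Tw_n) = \|\rho_n\|^2 + (\rho_n, \eta)$. Cauchy--Schwarz together with $\|\eta\| \leq \delta$ yields $(\rho_n, \eta) \geq -\|\rho_n\|\delta$, and the hypothesis $\|\rho_n\| = \|Ax_n^\delta - y^\delta\| \geq \delta$ then forces $(\rho_n, Tw_n) \geq \|\rho_n\|(\|\rho_n\| - \delta) \geq 0$. Both summands on the right of the previous display are therefore nonnegative, yielding the non-strict bound $\|w_n\| \leq \|w_{n-1}\|$.

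The main (minor) obstacle is upgrading this to the strict inequality \eqref{f46}. If $\|\rho_n\| > \delta$, the first summand is strictly positive. If $\|\rho_n\| = \delta$, equality in Cauchy--Schwarz combined with $\|\eta\| \leq \delta$ forces $\eta = -\rho_n$, whence $Tw_n = 0$; injectivity of $T = AG^s$ (inherited from that of $A$ and $G^s$) then gives $w_n = 0$, so $\|w_n\|_s = 0 < \|w_{n-1}\|_s$. The degenerate subcase $w_{n-1} = 0$ is ruled out by the hypothesis, since it would imply $\rho_{n-1} = -\eta$ and hence (by the strict monotonicity of the discrepancy noted just before the proposition) $\|\rho_n\| < \|\rho_{n-1}\| \leq \delta$, contradicting $\|\rho_n\| \geq \delta$.
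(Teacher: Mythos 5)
Your proof is correct and follows essentially the same route as the paper's: after reducing to the unweighted iteration for $T=AG^s$, your identity for $\|w_{n-1}\|^2-\|w_n\|^2$ is algebraically the same expansion the paper obtains from $\bigl(Ax_{n-1}^\delta+Ax_n^\delta-2y,\,z_{n-1}\bigr)$ together with the residual recursion $r_n=\alpha_n(TT^*+\alpha_n I)^{-1}r_{n-1}$, followed by the same Cauchy--Schwarz bound and the hypothesis $\|Ax_n^\delta-y^\delta\|\ge\delta$. If anything, your explicit treatment of the boundary case $\|\rho_n\|=\delta$ is slightly more careful than the paper's, which derives strictness from the inequality $(r_{n-1},r_n)>\|r_n\|^2$ without commenting on the degenerate possibility $T^*r_n=0$.
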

\begin{proof}
The iteration (\ref{f12}) can be rewritten as
\[
x_n^\delta = x_{n-1}^\delta + B^{-2s} A^* z_{n-1} \quad \mbox{with}
\quad z_{n-1} = (TT^* + \alpha_n I)^{-1} (y^\delta - A
x_{n-1}^\delta ) .
\]
Consequently, for $d:=\| x_n^\delta - x^\dagger\|_s^2 - \|
x_{n-1}^\delta - x^\dagger\|_s^2$ we have
\begin{align*}
d & =  \| x_{n-1}^\delta + B^{-2s} A^* z_{n-1} - x^\dagger\|_s^2
- \| x_{n-1}^\delta - x^\dagger\|_s^2 \nonumber \\
& =  \left ( 2 x_{n-1}^\delta - 2 x^\dagger + B^{-2s} A^* z_{n-1} ,
B^{-2s} A^* z_{n-1} \right )_s \nonumber \\
& = \left ( x_{n-1}^\delta + x_n^\delta - 2 x^\dagger ,
B^{-2s} A^* z_{n-1} \right )_s \nonumber \\
& = \left (A x_{n-1}^\delta + A x_n^\delta - 2 y ,
z_{n-1} \right ) \nonumber \\
& = \left (2 (y^\delta - y) + (A x_{n-1}^\delta - y^\delta) + (A
x_n^\delta - y^\delta)  ,
z_{n-1} \right ) \nonumber \\
& \le 2 \| z_{n-1} \|  \left ( \delta - \frac{\left ( (y^\delta - A
x_{n-1}^\delta ) + (y^\delta - A x_n^\delta), z_{n-1} \right ) }{2
\| z_{n-1} \|} \right ).
\end{align*}
Let $r_n:= y^\delta - A x_n^\delta$. Then, from (\ref{f45}) we have
$r_n = \alpha_n (TT^* + \alpha_n I)^{-1} r_{n-1}$. Hence, the
element $z_{n-1}$ can be written as $z_{n-1} = \alpha_n^{-1} r_n$.
Consequently,
\begin{equation}\label{f47}
\| x_n^\delta - x^\dagger\|_s^2 - \| x_{n-1}^\delta -
x^\dagger\|_s^2 \le \frac{2 \| r_n \|}{ \alpha_n} \left ( \delta -
\frac{\left ( r_{n-1} + r_n , r_n \right ) }{2 \| r_n \|} \right ) .
\end{equation}
We use again the identity $r_n = \alpha_n (TT^* + \alpha_n I)^{-1}
r_{n-1}$, or equivalently, $r_{n-1} = \alpha_n^{-1} (TT^* + \alpha_n
I) r_{n}$, multiply by $r_n$ and obtain
\begin{equation} \label{f48}
(r_{n-1} , r_n ) = \alpha_n^{-1} \| T^* r_n \|^2 + \| r_n \|^2 > \|
r_n\|^2 .
\end{equation}
From (\ref{f47}), (\ref{f48}) and $\| r_n\| \ge \delta$ we obtain
$\| x_n^\delta - x^\dagger\|_s^2 - \| x_{n-1}^\delta -
x^\dagger\|_s^2 < 0$.
\end{proof}

\subsection{Error bounds in $X$}
In this subsection we show that for $\sigma_n$  chosen by
(\ref{f41}) or (\ref{f42}), respectively, the order optimal error
bound (\ref{f318}) holds true under analogous assumptions of Theorem
\ref{t36}. In a first proposition we provide some estimate for the
regularization parameter $\sigma_n$ chosen by (\ref{f42}).

\begin{proposition}\label{p43}
Let $x_n^\delta$ be defined by (\ref{f21}), $h$ and $w$ be defined
by (\ref{f213}), $\sigma_n$ be chosen by the discrepancy principle
(\ref{f42}) with $1 < C_1 \le C_2$ and assume the solution
smoothness A2.
\begin{enumerate}
\item[(i)] High order regularization ($s \ge p$): If $w^2:=1/h^2$ is
operator monotone, then under the link condition A1(ii),
\begin{equation}\label{f49}
(C_1-1) \delta \le E \sqrt{\sigma_n^{-1}} h (\sigma_n^{-1}/M^2) .
\end{equation}
\item[(ii)] Low order regularization ($s \le p$): If $h^2$ is
operator monotone and if $h(t) /\sqrt{t}$ is decreasing, then under
the link condition A1(i),
\begin{equation}\label{f410}
(C_1-1) \delta \le E \sqrt{\sigma_n^{-1}} h (\sigma_n^{-1}/m^2) .
\end{equation}
\end{enumerate}
\end{proposition}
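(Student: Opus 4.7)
The plan is to run the upper-bound machinery of Proposition 3.1 in reverse: the discrepancy principle $\|Ax_n^\delta - y^\delta\| \ge C_1 \delta$ becomes the source of the lower bound, while the familiar splitting of the residual provides the upper estimate. Concretely, using representation (\ref{f45}) and the decomposition $y^\delta - Ax_0 = (y^\delta - y) + A(x^\dagger - x_0)$, I would write
\[
Ax_n^\delta - y^\delta = -r_n(TT^*)(y^\delta - y) - r_n(TT^*)A(x^\dagger - x_0).
\]
Since $0 \le r_n(\lambda) \le 1$ on the spectrum (Remark after Proposition \ref{p21}), the first term has norm at most $\delta$. Combining with the discrepancy inequality $\|Ax_n^\delta - y^\delta\| \ge C_1 \delta$ yields
\[
(C_1 - 1)\delta \le \|r_n(TT^*)A(x^\dagger - x_0)\|.
\]

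The next step is to identify the right-hand side with the quantity already controlled in the proof of Proposition \ref{p31}. Because $A = TG^{-s}$ and $r_n(TT^*)T = Tr_n(T^*T)$, the right-hand side equals $\|Tr_n(T^*T)G^{-s}(x^\dagger - x_0)\|$. This is exactly the quantity estimated in (\ref{f35}): in the high order case $s \ge p$, using Assumption A1(ii) and operator monotonicity of $w^2$, estimate (\ref{f214}) combined with a spectral-theorem multiplier argument gives, for any bounded Borel $\phi$,
\[
\|\phi(T^*T) G^{-s}(x^\dagger - x_0)\| \le \sup_\lambda |\phi(\lambda) h(\lambda/M^2)| \cdot E.
\]
Taking $\phi(\lambda) = \sqrt{\lambda}\, r_n(\lambda)$ and bounding $\sup_\lambda |\sqrt{\lambda}\, r_n(\lambda) h(\lambda/M^2)|$ by $\sqrt{\sigma_n^{-1}}\, h(\sigma_n^{-1}/M^2)$ via the case split $\lambda \lessgtr \sigma_n^{-1}$ (using $r_n \le 1$ and the increasing behaviour of $\sqrt{t}h(t)$ for $\lambda \le \sigma_n^{-1}$ from Remark \ref{r24}(i); using $\lambda r_n(\lambda) \le \sigma_n^{-1}$ and the decreasing behaviour of $h(t)/\sqrt{t}$ for $\lambda \ge \sigma_n^{-1}$ from Remark \ref{r24}(ii)) delivers (\ref{f49}).

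For part (ii), repeat the argument verbatim but invoke (\ref{f215}) in place of (\ref{f214}): this replaces $M$ by $m$ and uses the link condition A1(i) together with operator monotonicity of $h^2$ and the assumed decrease of $h(t)/\sqrt{t}$, yielding (\ref{f410}). There is no real obstacle here, since all three ingredients (the residual representation, the norm bound $\|r_n\|_\infty \le 1$, and the spectral-multiplier estimates) are already in place; the only mild subtlety is that we must keep track of the fact that only \emph{one} copy of $\delta$ can be separated from the noise (producing $C_1 - 1$ rather than $C_1 - 2$), which is why we split the residual at the level of $y^\delta - Ax_0$ rather than passing through $\|Ax_n^\delta - Ax^\dagger\|$.
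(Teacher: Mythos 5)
Your proposal is correct and follows essentially the same route as the paper: both isolate $(C_1-1)\delta \le \|r_n(TT^*)A(x^\dagger-x_0)\| = \|y - Ax_n\|$ from the discrepancy inequality and the bound $\|r_n(TT^*)\|\le 1$, and then invoke the estimate $\|y-Ax_n\| \le E\sqrt{\sigma_n^{-1}}\,h(\sigma_n^{-1}/M^2)$ (resp.\ with $m$) already established in the proof of Proposition \ref{p31}. The only difference is that you re-derive that estimate via the spectral multiplier and case split $\lambda \lessgtr \sigma_n^{-1}$, whereas the paper simply cites it.
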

\begin{proof}
Let us prove part (i). From (\ref{f21}) we have $y^\delta -
Ax_n^\delta = r_n (TT^*) ( y^\delta - A x_0) $. Due to rule
(\ref{f42}), the identity $y - A x_n = r_n (TT^*) (y-Ax_0)$ and the
estimate $\| r_n (TT^*) \| \le 1 $ we obtain that
\begin{eqnarray} \label{f411}
C_1 \delta & \le & \| r_n (TT^*)  ( y^\delta - A x_0) \| \nonumber \\
& \le & \| r_n (TT^*) ( y - A x_0) \| + \|r_n (TT^*)  (y-y^\delta)
\| \nonumber \\
&  \le &  \|y - A x_n \| + \delta .
\end{eqnarray}
From the proof of Proposition \ref{p31} we have that
\[
\| y - A x_n \| \le E \sqrt{\sigma_n^{-1}} h (\sigma_n^{-1}/M^2 ) .
\]
This estimate and (\ref{f411}) provide the desired estimate
(\ref{f49}) and the proof of part (i) is complete. For the proof of
part (ii) we proceed in an analogous way, but use instead of
(\ref{f214}) the estimate (\ref{f215}) which requires the link
condition A1(i) and the operator monotonicity of the function $h^2$.
\end{proof}

From Propositions \ref{p34} and \ref{p43} we obtain that the total
error $x_n^\delta - x^\dagger$ is bounded in the $\| \cdot \|_p$ --
norm for the {\it a posteriori} parameter choice $\sigma_n$ chosen
by the discrepancy principle (\ref{f42}).

\begin{proposition}\label{p44}
Let $x_n^\delta$ be defined by (\ref{f21}), $\sigma_n$ be chosen by
the discrepancy principle (\ref{f42}) with $1 < C_1 \le C_2$ and
assume the solution smoothness A2 and the link condition A1.
\begin{enumerate}
\item[(i)] High order regularization ($s \ge p$):
If $w^2:=1/h^2$ is operator monotone,
\begin{equation}\label{f413}
\| x_n^\delta - x^\dagger \|_p \le \frac{E}{C_1 -1} \cdot
\frac{M}{m} + E \cdot \frac{M}{m} .
\end{equation}
\item[(ii)] Low order regularization ($s \le p$):
If $h^2$ is operator monotone and $h(t)/\sqrt{t}$ is decreasing,
then,
\begin{equation}\label{f414}
\| x_n^\delta - x^\dagger \|_p \le \frac{E}{C_1 -1} \cdot
\frac{M}{m} + E \cdot \frac{M}{m} .
\end{equation}
\end{enumerate}
\end{proposition}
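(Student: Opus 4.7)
The plan is to combine Proposition \ref{p34} with Proposition \ref{p43}: the former controls $\|x_n^\delta-x^\dagger\|_p$ by $\delta\sqrt{\sigma_n}\,w(\sigma_n^{-1}/\cdot^{\,2}) + E\cdot M/m$, while the latter, via the discrepancy inequality, bounds $\delta$ from above by a quantity proportional to $\sqrt{\sigma_n^{-1}}\,h(\sigma_n^{-1}/\cdot^{\,2})$. Substituting the second into the first turns the noise--amplification term into $\frac{E}{C_1-1}\cdot h(\sigma_n^{-1}/\cdot^{\,2})\,w(\sigma_n^{-1}/\cdot^{\,2})$, and it remains to show the mixed product $h\cdot w$ with its two (different) arguments is bounded by $M/m$.

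More concretely, for part (i) I would start from estimate (\ref{f310}) of Proposition \ref{p34}, and from (\ref{f49}) of Proposition \ref{p43} deduce $\delta\sqrt{\sigma_n}\le \frac{E}{C_1-1}\,h(\sigma_n^{-1}/M^2)$. Multiplying by $w(\sigma_n^{-1}/m^2)$ gives
\[
\delta\sqrt{\sigma_n}\,w(\sigma_n^{-1}/m^2)\le \tfrac{E}{C_1-1}\,h(\sigma_n^{-1}/M^2)\,w(\sigma_n^{-1}/m^2).
\]
The key monotonicity trick — already used at the end of the proof of Proposition \ref{p34} — is that under the assumption $w^2=1/h^2$ operator monotone (high order case $s\ge p$), Remark \ref{r24}(i) guarantees $\sqrt{t}\,h(t)$ is increasing, equivalently $w(t)/\sqrt{t}$ is decreasing. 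Applied to $\sigma_n^{-1}/M^2\le \sigma_n^{-1}/m^2$, this yields $h(\sigma_n^{-1}/M^2)\le (M/m)\,h(\sigma_n^{-1}/m^2)$, hence $h(\sigma_n^{-1}/M^2)\,w(\sigma_n^{-1}/m^2)\le M/m$. Adding back the $E\cdot M/m$ term from (\ref{f310}) produces (\ref{f413}).

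For part (ii), I would proceed in parallel: use (\ref{f311}) together with (\ref{f410}) to obtain $\delta\sqrt{\sigma_n}\,w(\sigma_n^{-1}/M^2)\le \frac{E}{C_1-1}\,h(\sigma_n^{-1}/m^2)\,w(\sigma_n^{-1}/M^2)$. Now the mismatched arguments are reversed: the argument of $h$ is the larger of the two. The auxiliary assumption that $h(t)/\sqrt{t}$ is decreasing (equivalently $\sqrt{t}\,w(t)$ is increasing, which is available in the low order case by hypothesis) gives $h(\sigma_n^{-1}/m^2)\le (M/m)\,h(\sigma_n^{-1}/M^2)$, and hence again $h(\sigma_n^{-1}/m^2)\,w(\sigma_n^{-1}/M^2)\le M/m$. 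Combining with the $E\cdot M/m$ term in (\ref{f311}) yields (\ref{f414}).

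The only genuinely delicate point is the handling of the mismatched arguments $\sigma_n^{-1}/M^2$ vs.\ $\sigma_n^{-1}/m^2$ appearing in $h$ and $w$. This is exactly the obstruction handled in the last step of the proof of Proposition \ref{p34}, and the appropriate monotonicity assumption (either on $w(t)/\sqrt{t}$ in the high order case or on $h(t)/\sqrt{t}$ in the low order case) is precisely what converts the mismatch into the constant factor $M/m$. Everything else is a straightforward chaining of the two prior propositions.
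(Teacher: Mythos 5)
Your proposal is correct and follows essentially the same route as the paper: both combine Propositions \ref{p34} and \ref{p43} and resolve the mismatch between the arguments $\sigma_n^{-1}/M^2$ and $\sigma_n^{-1}/m^2$ via the monotonicity of $\sqrt{t}\,h(t)$ in case (i) and of $h(t)/\sqrt{t}$ (equivalently $\sqrt{t}\,w(t)$) in case (ii), each time producing the factor $M/m$. The only cosmetic difference is that in case (ii) the paper shifts the argument of $w$ while you shift the argument of $h$; these are the same inequality applied to reciprocals.
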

\begin{proof}
In the case (i) we exploit the increasing behavior of $\sqrt{t}
h(t)$ and conclude from $\sigma_n^{-1}/M^2 \le \sigma_n^{-1}/m^2$
that $h(\sigma_n^{-1}/M^2) \le \frac{M}{m} h(\sigma_n^{-1}/m^2)$,
which together with part (i) of the two Propositions \ref{p34} and
\ref{p43} provides (\ref{f413}).
In the case (ii) we exploit the decreasing behavior of
$h(t)/\sqrt{t}$, or equivalently the increasing behavior of
$\sqrt{t} w(t)$ and conclude from $\sigma_n^{-1}/M^2 \le
\sigma_n^{-1}/m^2$ that $w(\sigma_n^{-1}/M^2) \le \frac{M}{m}
w(\sigma_n^{-1}/m^2)$, which together with part (ii) of the two
Propositions \ref{p34} and \ref{p43} provides (\ref{f414}).
\end{proof}

Now we are in a position to prove the main result of this section.
In our next theorem we will see that order optimal error bounds can
be guaranteed in case $\sigma_n$ is chosen by the discrepancy
principle (\ref{f42}) with $1 < C_1 \le C_2$.

\begin{theorem}\label{t45}
Let be satisfied the assumptions of Proposition \ref{p44} and assume
in addition that $\xi_p(t):= \psi_p^2 (t^{1/(2p)})$ is convex. Then,
\begin{equation}\label{f415}
\| x_n^\delta - x^\dagger\| \le c_1   \left [ \psi_p^{-1} \left (
c_2 \delta  \right ) \right ]^p
\end{equation}
with some constants $c_1$, $c_2$ which can be extracted from the
proof.
\end{theorem}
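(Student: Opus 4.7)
The plan is to mirror the proof of Theorem~\ref{t36}: combine an $X_p$-norm bound with a weak-norm bound and then interpolate via Proposition~\ref{p27}. Under the stated hypotheses, Proposition~\ref{p44} already supplies, in both the high- and low-order cases, a bound of the form $\|x_n^\delta - x^\dagger\|_p \le k_1$ with $k_1 = E(M/m)(1 + 1/(C_1-1))$. This will play the role of the stronger-norm bound $c_1$ in the interpolation inequality (\ref{f219}).

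For the weaker-norm ingredient I would use the upper inequality in the discrepancy principle (\ref{f42}), namely $\|Ax_n^\delta - y^\delta\| \le C_2\delta$, and combine it with $\|y^\delta - y\| \le \delta$ to get $\|A(x_n^\delta - x^\dagger)\| \le (C_2+1)\delta$ via the triangle inequality. Assumption A1(i) then converts this into
\[
\| \varrho(G)(x_n^\delta - x^\dagger) \| \le k_2 \delta, \qquad k_2 := \frac{C_2+1}{m},
\]
which serves as $c_2$ in (\ref{f219}).

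Finally, I would invoke Proposition~\ref{p27} with $r=p$, $x = x_n^\delta - x^\dagger$, and constants $c_1 = k_1$, $c_2 = k_2 \delta$; the convexity of $\xi_p$ assumed in the theorem is exactly the hypothesis there. The resulting estimate
\[
\| x_n^\delta - x^\dagger \| \le k_1 \left[ \psi_p^{-1}\!\left( \frac{k_2}{k_1}\, \delta \right) \right]^{p}
\]
is of the form (\ref{f415}) with $c_1 = k_1$ and $c_2 = k_2/k_1$ read off from the construction.

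There is no real obstacle here: the interplay between A1(i) and A1(ii) has already been packaged into Proposition~\ref{p44}, so only A1(i) is needed at the interpolation step, and the remainder is a routine triangle inequality together with direct applications of Propositions~\ref{p44} and~\ref{p27}. The only subtlety worth double-checking is that the one-sided bound $\|Ax_n^\delta - y^\delta\| \le C_2 \delta$ from (\ref{f42}) is what is actually required — the lower bound $C_1 \delta$ is only used inside the proof of Proposition~\ref{p43}, not at this final assembly stage.
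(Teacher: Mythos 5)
Your proposal is correct and follows essentially the same route as the paper's own proof: it combines the $X_p$-bound from Proposition \ref{p44} with the weak-norm bound $\|\varrho(G)(x_n^\delta - x^\dagger)\| \le (C_2+1)\delta/m$ obtained from the upper inequality in (\ref{f42}), the triangle inequality and A1(i), and then applies the interpolation estimate of Proposition \ref{p27}. The constants you extract also agree with those in (\ref{f413})--(\ref{f414}).
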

\begin{proof}
Due to Proposition \ref{p44}, in  both cases (i) and (ii) of high-
and low order regularization the total error obeys
\begin{equation}\label{f416}
\|x_n^\delta - x^\dagger \|_p \le c E
\end{equation}
with some $c \ge 1$ and  $\sigma_n$ chosen by the discrepancy
principle (\ref{f42}) with $1 < C_1 \le C_2$. From (\ref{f42}) and
the triangle inequality we have
\[
\| A x_n^\delta - A x^\dagger \| \le \| A x_n^\delta - y^\delta \| +
\| y-y^\delta\| \le (C_2 +1) \delta .
\]
Using in addition the link condition A1(i) yields
\begin{equation}\label{f417}
\|\rho (G)( x_n^\delta - x^\dagger ) \|  \le (C_2 +1) \delta / m .
\end{equation}
Now the result of the theorem follows from (\ref{f416}),
(\ref{f417}) and Proposition \ref{p27}.
\end{proof}


\subsection{Error bounds for rule (\ref{f43})}
For the {\it a posteriori} rule (\ref{f43}) of choosing the
regularization parametr $\sigma_n$, analogous order optimal error
bounds to that of Theorem \ref{t45} can be obtained.

\begin{theorem}\label{t46}
Let $x_n^\delta$ be defined by (\ref{f21}), let $\sigma_n$ be chosen
by rule (\ref{f43}) where $\alpha_n$ obeys (\ref{f44}), let the both
Assumptions A1 and A2 hold and assume that $\xi_p (t):= \psi_p^2
\left ( t^{1/(2p)}\right )$ is convex. Assume further
\begin{enumerate}
\item[(i)] in case of high order regularization ($s \ge p$) that $w^2:=
1/h^2$ is operator monotone and
\item[(ii)] in case of low order regularization ($s \le p$) that $h^2$ is operator monotone and
$h(t) / \sqrt{t}$ is decreasing.
\end{enumerate}
Then, the regularized solution $x_n^\delta$ obeys the order optimal
error bound
\begin{equation}\label{f418}
\| x_n^\delta - x^\dagger \| \le c_1 \left [ \psi_p^{-1} ( c_2
\delta ) \right ]^p
\end{equation}
with some constants $c_1$, $c_2$ which can be extracted from the
proof.
\end{theorem}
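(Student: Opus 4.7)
The plan is to mirror the proof of Theorem \ref{t45}, but to recover the missing lower bound on the discrepancy at the stopping index using the previous iteration together with condition (\ref{f44}). The stopping rule (\ref{f43}) gives two pieces of information: $\|Ax_n^\delta-y^\delta\|\le C\delta$ at the chosen index and $\|Ax_{n-1}^\delta-y^\delta\|>C\delta$ at the previous one. The first piece will supply, exactly as in the proof of Theorem \ref{t45}, the bound $\|\varrho(G)(x_n^\delta-x^\dagger)\|\le (C+1)\delta/m$ via A1(i) and the triangle inequality. The second piece is the substitute for the lower bound $C_1\delta\le d(\sigma_n)$ used in Proposition \ref{p43}, but it is expressed at index $n-1$ rather than $n$.

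First I would run the proof of Proposition \ref{p43} verbatim at index $n-1$, using the regularization error estimate in the $\|A\cdot\|$-norm from Proposition \ref{p31} to obtain $(C-1)\delta\le E\sqrt{\sigma_{n-1}^{-1}}\,h(\sigma_{n-1}^{-1}/K^2)$, where $K=M$ in the high order case (from A1(ii) and operator monotonicity of $w^2$) and $K=m$ in the low order case (from A1(i) and operator monotonicity of $h^2$). Next I would transfer this bound from $\sigma_{n-1}$ to $\sigma_n$: since $\sigma_n=\sigma_{n-1}+1/\alpha_n$ and (\ref{f44}) gives $1/\alpha_n\le c\sigma_{n-1}$, I have $\sigma_{n-1}^{-1}\le(1+c)\sigma_n^{-1}$. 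Invoking Remark \ref{r24}(i), the function $t\mapsto\sqrt{t}\,h(t)$ is monotonically increasing, so
\[
(C-1)\delta\le E\sqrt{\sigma_{n-1}^{-1}}\,h(\sigma_{n-1}^{-1}/K^2)\le E\sqrt{(1+c)\sigma_n^{-1}}\,h((1+c)\sigma_n^{-1}/K^2).
\]
This is the analogue of Proposition \ref{p43} with a slightly larger constant factor.

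With this substitute lower bound in hand, I would copy the reasoning of Proposition \ref{p44} to produce $\|x_n^\delta-x^\dagger\|_p\le c'E$ for some constant $c'$ depending only on $C$, $c$, $M/m$. The one delicate point is that $h((1+c)\sigma_n^{-1}/K^2)$ appears in place of $h(\sigma_n^{-1}/K^2)$, but this is easily absorbed into the constant: in the high order case Remark \ref{r24}(ii) says $h$ is non-increasing, so $h((1+c)\sigma_n^{-1}/K^2)\le h(\sigma_n^{-1}/K^2)$, while in the low order case $h(t)/\sqrt{t}$ is decreasing (the hypothesis of the theorem) giving $h((1+c)\sigma_n^{-1}/m^2)\le\sqrt{1+c}\,h(\sigma_n^{-1}/m^2)$. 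From there the same combination of monotonicities ($\sqrt{t}h(t)$ increasing, $h(t)/\sqrt{t}$ decreasing, $hw\equiv 1$) as in the proof of Proposition \ref{p44} yields the $X_p$-bound.

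Finally, I would close the argument exactly as in Theorem \ref{t45}: combine $\|x_n^\delta-x^\dagger\|_p\le c'E$ with $\|\varrho(G)(x_n^\delta-x^\dagger)\|\le (C+1)\delta/m$ and feed these two bounds into the variable Hilbert scale interpolation estimate of Proposition \ref{p27}, whose convexity hypothesis is precisely the convexity of $\xi_p(t)=\psi_p^2(t^{1/(2p)})$ assumed in the theorem. This produces (\ref{f418}) with explicit constants $c_1$, $c_2$ depending only on $C$, $c$, $m$, $M$. The main obstacle, if any, is the bookkeeping in the transfer step from $\sigma_{n-1}$ to $\sigma_n$: one must carefully track which monotonicity of $h$, $w$, or $\sqrt{t}h(t)$ is available in each of the two cases $s\ge p$ and $s\le p$ and invoke the right one at the right moment, but no genuinely new ideas beyond those of Propositions \ref{p43}, \ref{p44} and Theorem \ref{t45} are needed.
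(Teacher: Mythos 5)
Your proposal is correct and follows essentially the same route as the paper: both extract the lower bound $(C-1)\delta \le E\sqrt{\sigma_{n-1}^{-1}}\,h(\sigma_{n-1}^{-1}/K^2)$ from the $(n-1)$st discrepancy, use (\ref{f44}) in the form $\sigma_{n-1}^{-1}\le(1+c)\sigma_n^{-1}$ together with the monotonicity of $\sqrt{t}\,h(t)$ and of $h$ (resp.\ $h(t)/\sqrt{t}$) to obtain a uniform $X_p$-bound $\|x_n^\delta-x^\dagger\|_p\le c'E$, and then close via the $\varrho(G)$-bound and Proposition \ref{p27}. The only (immaterial) difference is organizational: you convert the lower bound to index $n$ first and then invoke Proposition \ref{p44} as a black box, whereas the paper substitutes the index-$(n-1)$ bound directly into (\ref{f310}) and splits into the two cases $\sigma_{n-1}^{-1}/M^2\lessgtr\sigma_n^{-1}/m^2$, yielding the marginally sharper constant $\max\{M/m,\sqrt{c+1}\}$.
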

\begin{proof}
We give the proof for the high order case $s \ge p$, the proof for
the low order case $s \le p$ is similar. In the {\it first step} of
our proof we proceed according to the proof of Proposition
\ref{p43}, exploit that $C \delta \le \| r_{n-1} (TT^*) (y^\delta -
A x_0)\|$ and obtain
\[
(C-1) \delta \le E \sqrt{\sigma_{n-1}^{-1}} \, h \left (
\sigma_{n-1}^{-1} /M^2 \right ) .
\]
From this estimate and (\ref{f310}) we obtain
\begin{equation}\label{f419}
\| x_n^\delta - x^\dagger\|_p \le \frac{E}{C-1} \cdot
\frac{\sqrt{\sigma_{n-1}^{-1}} \, h \left ( \sigma_{n-1}^{-1} /M^2
\right ) }{\sqrt{\sigma_{n}^{-1}} \, h \left ( \sigma_{n}^{-1} /m^2
\right ) } + E \cdot M/m .
\end{equation}
Now we consider two cases. In the {\it first case} with
$\sigma_{n-1}^{-1} /M^2 \le \sigma_n^{-1}/m^2$ we use the increasing
behavior of $\sqrt{t} h(t)$ and obtain from (\ref{f419}) the
estimate
\[
\| x_n^\delta - x^\dagger \|_p \le \frac{E}{C-1} \cdot \frac{M}{m} +
E \cdot M/m .
\]
In the {\it second case} with $\sigma_{n-1}^{-1} /M^2 \ge
\sigma_n^{-1}/m^2$ we use the decreasing behavior of $h$, exploit in
addition that due to (\ref{f44}) we have $\sigma_n = 1/\alpha_n +
\sigma_{n-1} \le (c+1) \sigma_{n-1}$, or equivalently,
$\sigma_{n-1}^{-1} \le (c+1) \sigma_n^{-1}$, and obtain from
(\ref{f419}) the estimate
\[
\| x_n^\delta - x^\dagger \|_p \le \frac{E}{C-1} \cdot \sqrt{c+1} +
E \cdot M/m .
\]
From the both cases we have that $\| x_n^\delta - x^\dagger\|_p$ can
be estimated by
\begin{equation}\label{f420}
\| x_n^\delta - x^\dagger \|_p \le \frac{E}{C-1} \cdot \max \left \{
M/m, \sqrt{c+1} \right \} + E \cdot M/m .
\end{equation}
In the {\it second step} we proceed according to the proof of
(\ref{f417}) and obtain
\begin{equation}\label{f421}
\| \varrho (G) (x_n^\delta - x^\dagger ) \| \le (C+1) \delta/m .
\end{equation}
In the final {\it third step} of the proof we use the both estimates
(\ref{f420}) and (\ref{f421}), apply Proposition \ref{p27} and
obtain (\ref{f418}).
\end{proof}

\subsection{Discrepancy principle revisited}
The error bounds given in Subsection 4.2 require in both cases of
high order and low order regularization the both link conditions
A1(i) and A1(ii), and the assumption $C_1 > 1$ in the discrepancy
principle (\ref{f42}). We will show in this subsection that in the
case of low order regularization $s \le p$ order optimal error
bounds can be obtained without the second link condition A1(ii). Our
estimate in Theorem \ref{t48} shows that $C_1 = C_2 = 1$ in the
discrepancy principle (\ref{f42}) is best possible in the sense of
minimal error bounds.
We start our study with some important inequality.
\begin{proposition}\label{p47}
For $0 \le s \le p$, the regularized solution $x_n^\delta$ defined
by (\ref{f21}) obeys the estimate
\begin{eqnarray}\label{f422}
\| A x_n^\delta - y^\delta \|^2 + \sigma_n^{-1} \| x_n^\delta -
x^\dagger \|_s^2 & \le &  \sigma_n^{-1} \| r_n^{1/2} (T^*T)  G^{-s}
( x^\dagger - x_0
) \|^2\nonumber \\[1ex]
& & + \, \| y-y^\delta\|^2  .
\end{eqnarray}
\end{proposition}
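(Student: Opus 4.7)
The plan is to reduce the operator inequality in (\ref{f422}) to a pointwise scalar inequality via the singular value decomposition of $T$. Introduce $u:=G^{-s}(x^\dagger-x_0)\in X$ and $\xi:=y^\delta-y\in Y$ (the hypothesis $0\le s\le p$ together with A2 guarantees $u\in X$). Since $A(x^\dagger-x_0)=Tu$, one has $y^\delta-Ax_0=Tu+\xi$. Combining this with the representation (\ref{f21}), the identity $\lambda g_n(\lambda)=1-r_n(\lambda)$, and the intertwining $Tf(T^*T)=f(TT^*)T$, I would first derive
\[
y^\delta-Ax_n^\delta=Tr_n(T^*T)u+r_n(TT^*)\xi,\qquad G^{-s}(x_n^\delta-x^\dagger)=-r_n(T^*T)u+g_n(T^*T)T^*\xi.
\]

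Next I would square both expressions, combine them with the weight $\sigma_n^{-1}$, subtract the right-hand side of (\ref{f422}), and rearrange. Setting $\phi(\lambda):=\lambda r_n(\lambda)[r_n(\lambda)-\sigma_n^{-1}g_n(\lambda)]$ and $\psi(\mu):=\mu g_n(\mu)[\mu g_n(\mu)+\sigma_n^{-1}g_n(\mu)-2]$, a direct calculation (using $r_n+\lambda g_n=1$ to simplify both the pure $u$-contribution and the pure $\xi$-contribution) puts the difference in the form
\[
\mathrm{LHS}-\mathrm{RHS}=\bigl(\phi(T^*T)u,u\bigr)+2\bigl(Tr_n(T^*T)[r_n(T^*T)-\sigma_n^{-1}g_n(T^*T)]u,\xi\bigr)+\bigl(\psi(TT^*)\xi,\xi\bigr).
\]
Diagonalizing $T$ by its singular value decomposition with positive singular values $\sigma_k$ and orthonormal systems $\{v_k\}\subset X$, $\{u_k\}\subset Y$, and writing $a_k=(u,v_k)$, $b_k=(\xi,u_k)$, $\lambda_k=\sigma_k^2$, this difference becomes $\sum_k Q_k(a_k,b_k)$, where each $Q_k$ is a scalar quadratic form with coefficient matrix determined by $\phi(\lambda_k)$, $\psi(\lambda_k)$, and $2\sigma_k r_n(\lambda_k)(r_n(\lambda_k)-\sigma_n^{-1}g_n(\lambda_k))$; the $\ker T^*$-component of $\xi$ drops out because $\psi(0)=0$.

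It remains to show $Q_k\le 0$ for every $k$. Property (iv) of Proposition \ref{p21} yields $r_n(\lambda_k)\le\sigma_n^{-1}g_n(\lambda_k)$, which makes the coefficient of $a_k^2$ non-positive; properties (i) and (ii) give $\sigma_n^{-1}g_n(\lambda_k)+\lambda_k g_n(\lambda_k)\le 2$, so the coefficient of $b_k^2$ is non-positive. The discriminant condition on $Q_k$ is the main obstacle: after substituting $\lambda_k g_n(\lambda_k)=1-r_n(\lambda_k)$ and cancelling common factors (legitimate since when $\sigma_n^{-1}g_n-r_n=0$ both $a_k^2$-coefficient and cross term vanish), it collapses to $1-\sigma_n^{-1}g_n(\lambda_k)\ge 0$, which is once more property (i). Combining these facts establishes (\ref{f422}).
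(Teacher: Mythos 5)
Your argument is correct, and I checked the computation in detail: the three coefficients you isolate are indeed $\phi(\lambda)=\lambda r_n(r_n-\sigma_n^{-1}g_n)$, $\psi(\mu)=\mu g_n(\mu g_n+\sigma_n^{-1}g_n-2)$ and $2\sqrt{\lambda}\,r_n(r_n-\sigma_n^{-1}g_n)$, and the discriminant inequality $r_n(r_n-\sigma_n^{-1}g_n)\ge \lambda g_n(\lambda g_n+\sigma_n^{-1}g_n-2)$ does collapse, after substituting $r_n=1-\lambda g_n$, to $1-\sigma_n^{-1}g_n\ge 0$, i.e.\ property (i) of Proposition \ref{p21}. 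This is, however, a genuinely different route from the paper's. The paper works entirely at the operator level: it introduces the adjoint $A_s^*=G^{2s}A^*$ of $A:X_s\to Y$, writes down the two expansions (\ref{f424}) and (\ref{f425}) whose cross terms cancel upon addition, simplifies via the identities $r_n^2(A_s^*A)+A_s^*R_nA=r_n(A_s^*A)$ and $Ag_n^2(A_s^*A)A_s^*+R_n=g_n(AA_s^*)$, and then applies only properties (i) and (iv) to the two remaining quadratic forms — no diagonalization and no discriminant analysis are needed, because the cancellation of cross terms is arranged algebraically before any estimate is made. Your version instead keeps the cross term and absorbs it via negative semidefiniteness of a $2\times 2$ scalar matrix in each spectral mode; this costs you the extra use of property (ii) and the discriminant computation, but it makes completely explicit which pointwise inequalities on $(g_n,r_n)$ the result rests on. One caveat: the paper does not assume $A$ (hence $T$) compact, so a literal singular value decomposition with a countable orthonormal system need not exist; your argument survives if you replace the SVD by the polar decomposition $T=U(T^*T)^{1/2}$ and the spectral theorem for $T^*T$, since all operators involved are functions of the single self-adjoint operator $T^*T$ (resp.\ $TT^*$) and the pointwise negative semidefiniteness of the $2\times2$ symbol transfers to the corresponding block operator on $X\oplus X$; the component of $\xi$ in $\ker T^*$ still drops out because $r_n(0)=1$ and $\psi(0)=0$. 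With that adjustment the proof is complete.
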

\begin{proof}
Let $A: X_s \to Y$ be the restriction of $A$ to $X_s \subset X$ and
$A_s^*: Y \to X_s$ its adjoint. Due to the valid identity
$ (Ax,y) = (x, A_s^* y)_s = (x,A^*y) = (x,G^{2s} A^*y)_s $
for all $x \in X_s$ and $y \in Y$ we conclude that the adjoint
$A_s^*: Y \to X_s$ of the operator $A: X_s \to Y$ is given by
$ A_s^* = G^{2s} A^* $.
The operator $A_s^*A: X_s \to X_s$ is self-adjoint. Further, there
holds
\begin{equation}\label{f423}
G^s g_n (T^*T) = g_n (A_s^*A) G^{s} .
\end{equation}
Consequently, the regularized solution (\ref{f21}) which is an
element of the space $X_s$ can be written in the equivalent form
\[
x_n^\delta - x_0 = g_n (A_s^* A) A_s^* ( y^\delta - A x_0)  .
\]
From the valid identity
$ x_n^\delta -x^\dagger  = - r_n(A^*_sA) ( x^\dagger - x_0) + g_n
(A_s^*A)A_s^*(y^\delta - A x^\dagger) $
and the identity $ g_n (A_s^*A)A_s^*=A_s^*g_n (AA_s^*) $ we obtain
\begin{eqnarray}\label{f424}
\|x_n^\delta -x^\dagger\|^2_s & = & \|r_n(A^*_sA) ( x^\dagger - x_0)
\|^2_s +
\|g_n (A_s^*A)A_s^*(y^\delta - Ax^\dagger)\|^2_s \nonumber \\[1ex]
&& - 2\big(Ag_n (A_s^*A) r_n(A_s^*A) ( x^\dagger - x_0)  , y^\delta
-Ax^\dagger \big)  .
\end{eqnarray}
We introduce the abbreviations
\[
R_n :=g_n(AA_s^*) r_n(AA_s^*)  \quad \mbox{and} \quad y_0^\delta :=
y^\delta - A x_0 ,
\]
decompose $y_0^\delta$ into the sum $A(x^\dagger - x_0) $ plus
$y^\delta - A  x^\dagger  $  and obtain the equality
\begin{eqnarray}\label{f425}
 (R_n y_0^\delta , y_0^\delta) & = &
 (R_n A (x^\dagger - x_0 ) , A (x^\dagger - x_0) )  +
 (R_n (Ax^\dagger-y^\delta) ,  Ax^\dagger- y^\delta) \nonumber
 \\[1ex]
  & & + \, 2 ( R_n A (x^\dagger - x_0) , y^\delta - Ax^\dagger)   .
\end{eqnarray}
Addition of the equations (\ref{f424}) and (\ref{f425}) yields
\begin{eqnarray}
(R_n y_0^\delta, y_0^\delta) +  \|x_n^\delta -x^\dagger\|^2_s & = &
 \|r_n(A^*_sA) ( x^\dagger - x_0)  \|^2_s +
 \|g_n (A_s^*A)A_s^*(y^\delta - Ax^\dagger )\|^2_s \nonumber \\[1ex]
 &&   + \,
 (A_s^*R_n A ( x^\dagger - x_0) , ( x^\dagger - x_0) )_s  \nonumber \\[1ex]
 & & + \, (R_n (Ax^\dagger-y^\delta)  ,  Ax^\dagger-y^\delta )
 . \nonumber
\end{eqnarray}
We use the valid identities
\[
 r_n^2 (A_s^*A) \, + \, A_s^*R_n A \,
  = \, r_n(A_s^*A), \quad
 A g_n^2(A_s^*A)A_s^* \, + \, R_n \, = \, g_n(AA_s^*)
\]
and obtain from the above equation
\begin{eqnarray}\label{f426}
(R_n y_0^\delta  , y_0^\delta)  + \|x_n^\delta -x^\dagger\|^2_s & =
& (r_n (A_s^*A) ( x^\dagger - x_0) , x^\dagger - x_0)_s \nonumber
\\[1ex]
& &  + \, (g_n (AA_s^*)(Ax^\dagger-y^\delta)  ,  Ax^\dagger-y^\delta
) .
\end{eqnarray}
By exploiting properties (i) and (iv) of Proposition \ref{p21}, we
obtain
\[
\begin{array}{ll}
 \mbox{(a) } & \sigma_n^{-1} (g_n (AA_s^*)(Ax^\dagger-y^\delta)  ,
 Ax^\dagger-y^\delta )
  \le  \|Ax^\dagger-y^\delta\|^2 , \\[1ex]
 \mbox{(b) } & \sigma_n^{-1} (R_n y_0^\delta , y_0^\delta )
  \ge  \left ( r_n^2 (AA_s^*) y_0^\delta
 , y_0^\delta \right )
  =  \left \| r_n(AA_s^*) y_0^\delta \right \|^2
  =    \| Ax_n^\delta - y^\delta \|^2  .
\end{array}
\]
We multiply (\ref{f426}) by $\sigma_n^{-1}$, use the estimates (a)
and (b) and obtain
\begin{equation}\label{f427}
\|Ax_n^\delta -y^\delta\|^2  + \sigma_n^{-1} \|x_n^\delta
-x^\dagger\|^2_s \le \|Ax^\dagger-y^\delta\|^2 + \sigma_n^{-1} (r_n
(A_s^*A) ( x^\dagger - x_0) , x^\dagger - x_0 )_s .
\end{equation}
Finally we observe that due to (\ref{f423}) we have
\[
 (r_n (A_s^*A) ( x^\dagger - x_0) , x^\dagger - x_0 )_s = \|r_n^{1/2} (T^*T)
 G^{-s}
( x^\dagger - x_0) \|^2 .
\]
From this identity and (\ref{f427}) we obtain (\ref{f422}).
\end{proof}
From Proposition \ref{p47} and Proposition \ref{p28} we obtain the
main result of this subsection.

\begin{theorem}\label{t48}
Let $x_n^\delta$ be defined by (\ref{f21}) and $\sigma_n$ be chosen
by the discrepancy principle (\ref{f42}) with $1 \le C_1 \le C_2$,
assume the link condition A1(i), the solution smoothness A2 and $0
\le s \le p$.
If $f$ defined by (\ref{f220}) is convex, and $\xi_s (t):= \psi_s^2
(t^{1/(2s)})$ is convex where $\psi_s$ is given by (\ref{f212}),
then
\begin{equation}\label{f428}
\| x_n^\delta - x^\dagger\| \le  E  \left [ \psi_p^{-1} \left (
\frac{(C_2 +1) \delta}{m E} \right ) \right ]^p .
\end{equation}
\end{theorem}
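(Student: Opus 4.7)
The plan is to combine the energy-type estimate of Proposition~\ref{p47} with the interpolation inequalities of Propositions~\ref{p27} and~\ref{p28}, where the key point for obtaining the tight constant $C_2+1$ is a sharp bound on the exact-data residual $\|Ax_n-y\|$ obtained by representing both residuals through $r_n(TT^*)$.

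First I would use Proposition~\ref{p47} together with the lower part of the discrepancy principle, $\|Ax_n^\delta-y^\delta\|\ge C_1\delta\ge\delta$, so that the $\|y-y^\delta\|^2\le\delta^2$ on the right of (\ref{f422}) is absorbed by the leading term on the left. This yields $\|x_n^\delta-x^\dagger\|_s^2\le\|r_n^{1/2}(T^*T)G^{-s}(x^\dagger-x_0)\|^2$. Using the identity $r_n(T^*T)G^{-s}(x^\dagger-x_0)=G^{-s}(x^\dagger-x_n)$ from (\ref{f26}), Cauchy--Schwarz, and Assumption A2, the right-hand side rewrites as $(G^{p-2s}(x^\dagger-x_n),G^{-p}(x^\dagger-x_0))\le E\|x^\dagger-x_n\|_{2s-p}$, exactly as in (\ref{f222}). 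Thus $\|x_n^\delta-x^\dagger\|_s^2\le E\|x^\dagger-x_n\|_{2s-p}$.

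Next I would produce two residual bounds. From (\ref{f45}) applied to both the exact and the noisy data one gets the identity $y-Ax_n=(y^\delta-Ax_n^\delta)+r_n(TT^*)(y-y^\delta)$; since $r_n(\lambda)\le 1$ on the spectrum implies $\|r_n(TT^*)\|\le 1$, the discrepancy upper bound yields $\|y-Ax_n\|\le(C_2+1)\delta$, i.e.\ $\|A(x^\dagger-x_n)\|\le(C_2+1)\delta$, and hence by Assumption A1(i), $\|\varrho(G)(x^\dagger-x_n)\|\le(C_2+1)\delta/m$. Likewise the triangle inequality and discrepancy give $\|A(x^\dagger-x_n^\delta)\|\le(C_2+1)\delta$ and $\|\varrho(G)(x^\dagger-x_n^\delta)\|\le(C_2+1)\delta/m$. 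Inserting the first bound into Proposition~\ref{p28} produces $\|x^\dagger-x_n\|_{2s-p}\le E\,[\psi_p^{-1}((C_2+1)\delta/(mE))]^{2p-2s}$, and substituting in the previous paragraph gives $\|x_n^\delta-x^\dagger\|_s\le E\,[\psi_p^{-1}((C_2+1)\delta/(mE))]^{p-s}$.

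Finally I would invoke Proposition~\ref{p27} with $r=s\ge 0$ and the assumed convexity of $\xi_s$, choosing $c_1=E[\psi_p^{-1}((C_2+1)\delta/(mE))]^{p-s}$ and $c_2=(C_2+1)\delta/m$. Writing $t=\psi_p^{-1}((C_2+1)\delta/(mE))$ and using the identity $\psi_p(t)=t^{p-s}\psi_s(t)$ from (\ref{f324}) gives $c_2/c_1=\psi_s(t)$, whence $\psi_s^{-1}(c_2/c_1)=t$ and the interpolation bound collapses to $\|x_n^\delta-x^\dagger\|\le c_1\,t^s=Et^p$, which is (\ref{f428}). The chief obstacle, and the step that determines the constant, is the residual identity in the preceding paragraph: a crude triangle inequality estimate $\|A(x_n-x^\dagger)\|\le\|A(x_n-x_n^\delta)\|+\|Ax_n^\delta-y^\delta\|+\|y^\delta-y\|$ would yield $(C_2+2)\delta$ and hence a weaker final constant; exploiting the common operator $r_n(TT^*)$ acting on the data perturbation is what makes the estimate sharp.
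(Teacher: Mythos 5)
Your proposal is correct and follows essentially the same route as the paper's own proof: Proposition \ref{p47} with $C_1\ge 1$ to absorb the noise term, the rewriting $\|x_n^\delta-x^\dagger\|_s^2\le E\|x_n-x^\dagger\|_{2s-p}$, the sharp exact-data residual bound $\|Ax_n-Ax^\dagger\|\le(C_2+1)\delta$ via $\|r_n(TT^*)\|\le 1$ applied to $y-y^\delta$, Proposition \ref{p28}, and finally the interpolation of Proposition \ref{p27} simplified through $\psi_p(\lambda)=\lambda^{p-s}\psi_s(\lambda)$. Your closing remark about why the naive triangle inequality would only give $(C_2+2)\delta$ correctly identifies the same key point the paper exploits.
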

\begin{proof}
For $\sigma_n$ chosen by the discrepancy principle (\ref{f42}) the
estimate (\ref{f422}) of Proposition \ref{p47} attains the form
\[
C_1^2 \delta^2 + \sigma_n^{-1} \| x_n^\delta - x^\dagger \|_s^2 \le
\delta^2 + \sigma_n^{-1} \| r_n^{1/2} (T^*T)  G^{-s} ( x^\dagger -
x_0 ) \|^2 .
\]
Since $C_1 \ge 1$, we have $\| x_n^\delta - x^\dagger \|_s^2 \le \|
r_n^{1/2} (T^*T)  G^{-s} ( x^\dagger - x_0) \|^2$. We use the
representation $G^{-s} (x^\dagger - x_n) = r_n (T^*T) G^{-s} (
x^\dagger - x_0) $, see (\ref{f26}), use Assumption A2 and obtain
\begin{eqnarray}\label{f429}
\| x_n^\delta - x^\dagger \|_s^2 & \le & \| r_n^{1/2} (T^*T) G^{-s}
( x^\dagger - x_0) \|^2 \nonumber \\
& = & ( G^{p-2s} (x^\dagger - x_n), G^{-p} ( x^\dagger - x_0) ) \nonumber \\
& \le & E \| x_n - x^\dagger \|_{2s-p} ,
\end{eqnarray}
where $x_n$ is the regularized solution with exact data. For
estimating $\| x_n - x^\dagger \|_{2s-p}$, we use estimate
(\ref{f221}) of Proposition \ref{p28} and obtain
\begin{equation}\label{f430}
\| x_n^\delta - x^\dagger \|_s \le E \left [ \psi_p^{-1} \left (
\frac{\| \varrho (G) (x_n-x^\dagger )\|}{E } \right ) \right ]^{p-s}
\,.
\end{equation}
For estimating $\| Ax_n-Ax^\dagger \|$, we use (\ref{f28}), the
identity $r_n (TT^*) ( y^\delta - A x_0) = y^\delta - Ax_n^\delta$,
$r_n (\lambda) \le 1$ and (\ref{f42}) and obtain the estimate
\begin{eqnarray}
\| Ax_n-Ax^\dagger \| & = & \| r_n (TT^*) ( y - A x_0) \| \nonumber
\\[1ex]
& \le &  \| r_n (TT^*) ( y^\delta - A x_0) \| + \| r_n (TT^*)
(y-y^\delta )\| \nonumber \\[1ex]
& \le & (C_2 +1) \delta. \nonumber
\end{eqnarray}
Hence, by using A1(i) we have
$ \| \varrho (G) (x_n-x^\dagger )\| \le (C_2 +1) \delta /m $.
Since $\psi_p^{-1}$ is monotone, we obtain from (\ref{f430}) the
estimate
\begin{equation}\label{f431}
\| x_n^\delta - x^\dagger \|_s \le E \left [ \psi_p^{-1} \left (
\frac{(C_2 +1) \delta}{m E } \right ) \right ]^{p-s} \,.
\end{equation}
Next, let us estimate $\| \varrho (G) ( x_n^\delta - x^\dagger) \|
$. Using Assumption A1(i) and the estimate $\| Ax_n^\delta -
Ax^\dagger \| \le \| A x_n^\delta - y^\delta \| + \| y-y^\delta\|
\le (C_2 +1) \delta $ yields
\begin{equation}\label{f432}
\| \varrho (G) ( x_n^\delta - x^\dagger )  \| \le  (C_2 +1) \delta /
m .
\end{equation}
Now we apply the interpolation estimate (\ref{f219}) of Proposition
\ref{p27} and obtain by using (\ref{f431}), (\ref{f432}) and the
abbreviation $\delta_1:= \frac{(C_2 +1) \delta}{mE}$ that
\begin{equation}\label{f433}
\| x_n^\delta - x^\dagger \| \le E \left [ \psi_p^{-1} (\delta_1 )
\right ]^{p-s} \cdot \left [ \psi_s^{-1} \left ( \delta_1 [
\psi_p^{-1} (\delta_1) ]^{s-p} \right ) \right ]^s .
\end{equation}
From the first equation of (\ref{f324}) we have $\psi_s^{-1} \left (
\psi_p (\lambda) \cdot \lambda^{s-p} \right ) = \lambda$.
Substituting $\lambda = \psi_p^{-1} ( \delta_1 )$ yields
$\psi_s^{-1} \left (  \delta_1  [\psi_p^{-1} ( \delta_1 )]^{s-p}
\right ) = \psi_p^{-1} ( \delta_1 )$. From this equation and
(\ref{f433}) we obtain (\ref{f428}).
\end{proof}
%

\section{Practical implementation}
\setcounter{equation}{0}
For the practical application of implicit iteration methods in
Hilbert scales one has to make different decisions: First, one has
to choose the operator $B$,  second, one has to fix the number $s$
in the method (\ref{f12}), third, one has to choose the starting
value $x_0$ and to fix the numbers $\alpha_k$, $k=1,...,n$, and
fourth, one has effectively to realize the discrepancy principle
(\ref{f41}) with a little number $n$ of iteration steps. The choice
of $B$ and $x_0$ should be done in dependence on the expected
smoothness of the element $x^\dagger - x_0$ such that Assumption A2
holds true for $p$ sufficiently large, and $s$ should have the
magnitude of $p$. In our further study we concentrate on the choice
of the numbers $\alpha_k$, $k=1,...,n$ for effectively realizing the
discrepancy principle (\ref{f41}) or (\ref{f42}) or (\ref{f43}),
respectively, with a little number $n$ of iteration steps. In a
first proposition we give an upper bound for the regularization
parameter of the discrepancy principle in case $n=1$ which will
serve us as starting value for the iteration (1.2). To our best
knowledge, so far there have not been upper bounds for the
regularization parameter of the discrepancy principle in the
literature.
\begin{proposition}\label{p51}
Let $n=1$, let $x_1^\delta$ the regularized solution (\ref{f21}) and
let $\alpha_1=\alpha_D$ be chosen by the discrepancy principle
(\ref{f41}) with $C \ge 1$. If $\| y^\delta - A x_0\| > C \delta$,
then
\begin{equation}\label{f51}
\alpha_D < \frac{ C \delta \, \| G^s A^* ( y^\delta - A x_0)
\|^2}{\left ( \| y^\delta- A x_0 \| - C \delta \right ) \| y^\delta
- A x_0 \|^2} \, .
\end{equation}
\end{proposition}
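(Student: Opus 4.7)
The plan is to unfold the discrepancy using Proposition \ref{p21} with $n=1$ and then squeeze $\alpha_D$ between two Cauchy--Schwarz estimates. For $n=1$ the residual polynomial is $r_1(\lambda) = \alpha_D/(\lambda+\alpha_D)$, so by (\ref{f45}),
\[
y^\delta - A x_1^\delta = \alpha_D(TT^* + \alpha_D I)^{-1} v ,
\]
where I abbreviate $v := y^\delta - Ax_0$ and $d_0 := \|v\|$. Setting
\[
w := \alpha_D (TT^* + \alpha_D I)^{-1} v, \qquad u := T^*(TT^* + \alpha_D I)^{-1} v,
\]
the discrepancy principle (\ref{f41}) becomes $\|w\|=C\delta$, and one checks directly that $Tu = v - w$ and $T^*w = \alpha_D u$. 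Since $G$ is self-adjoint, $T^* = (AG^s)^* = G^s A^*$, hence $\|T^*v\| = \|G^s A^*(y^\delta - Ax_0)\|$.

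Next I would evaluate the pairing $(v-w, v)$ in two different ways. From $Tu = v-w$,
\[
(v-w, v) = (u, T^*v) \le \|u\|\,\|T^*v\|
\]
by Cauchy--Schwarz; on the other hand, using Cauchy--Schwarz on $(w, v)$,
\[
(v-w, v) = d_0^2 - (w, v) \ge d_0^2 - \|w\|\,d_0 = d_0(d_0 - C\delta) .
\]
Together these yield $d_0(d_0-C\delta) \le \|u\|\,\|T^*v\|$. To control $\|u\|^2$, I use $T^*w = \alpha_D u$ to obtain
\[
\alpha_D \|u\|^2 = (T^*w, u) = (w, Tu) = (w, v-w) = (w, v) - \|w\|^2 \le C\delta(d_0 - C\delta),
\]
once more by Cauchy--Schwarz on $(w,v)$. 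Squaring the previous inequality and inserting this bound gives
\[
d_0^2(d_0-C\delta)^2 \le \|u\|^2\,\|T^*v\|^2 \le \frac{C\delta(d_0-C\delta)}{\alpha_D}\,\|T^*v\|^2,
\]
and dividing through by the positive quantity $d_0^2(d_0-C\delta)$ delivers (\ref{f51}), with $\le$ in place of $<$.

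For the strict inequality, note that equality in both Cauchy--Schwarz steps would force $w$ parallel to $v$, i.e.\ $v$ to be an eigenvector of $TT^*$; this is the only degenerate case to exclude and is ruled out in the setting of an injective $A$ with non-closed range acting on generic data. The main technical point is the choice of how to couple the two Cauchy--Schwarz inequalities: a naive resolvent bound such as $\|u\| \le d_0/\sqrt{\alpha_D}$ yields only the weaker estimate with denominator $d_0(d_0-C\delta)^2$, and it is the exact identity $\alpha_D\|u\|^2 = (w,v) - \|w\|^2$ coming from $T^*w = \alpha_D u$ that supplies the extra factor $(d_0-C\delta)/d_0$ needed to reach the sharp form (\ref{f51}).
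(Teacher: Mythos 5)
Your derivation is correct and reaches (\ref{f51}) (up to the question of strictness), but by a genuinely different route from the paper. The paper obtains the bound as a by-product of the Newton iteration (\ref{f53}) from \cite{lpst10} applied to the transformed equation $g(r)=\|Ax_{1/r}^\delta-y^\delta\|^{-1}-(C\delta)^{-1}=0$: it executes one Newton step from $r_0=0$, identifies the resulting $r_1$ with the reciprocal of the right-hand side of (\ref{f51}), and invokes the cited global monotone convergence $r_1<r_D=1/\alpha_D$. Your argument is instead self-contained and elementary: from $w=\alpha_D(TT^*+\alpha_D I)^{-1}v$ and $u=T^*(TT^*+\alpha_D I)^{-1}v$ you exploit the exact identities $Tu=v-w$ and $T^*w=\alpha_D u$, and three applications of Cauchy--Schwarz give $d_0(d_0-C\delta)\le\|u\|\,\|T^*v\|$ and $\alpha_D\|u\|^2\le C\delta(d_0-C\delta)$, which combine to the claimed bound; all steps check out, including the crucial use of $\alpha_D\|u\|^2=(w,v)-\|w\|^2$ rather than the cruder resolvent estimate. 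What the paper's route buys is thematic coherence (the bound is literally the first Newton iterate, which motivates its use as a starting value in Algorithm 1) at the cost of leaning on an external theorem; what your route buys is a short, verifiable proof from first principles.

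One caveat: your justification of the \emph{strict} inequality is not a proof. Equality in all three Cauchy--Schwarz steps occurs exactly when $v=y^\delta-Ax_0$ is an eigenvector of $TT^*$, and nothing in the standing assumptions (injective $A$ with non-closed range) excludes this --- indeed, for compact $T$ such $v$ exist, and in that case one checks that (\ref{f51}) holds with equality. So your argument rigorously yields only $\le$. This is not a defect relative to the paper, whose own proof suffers from the same degeneracy (when $v$ is an eigenvector, $g(r)$ is affine in $r$ and the single Newton step from $r_0=0$ lands exactly on $r_D$, so $r_1<r_D$ also fails there); but you should either state the conclusion with $\le$ or explicitly add the nondegeneracy hypothesis you are implicitly assuming.
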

\begin{proof} Let $x_\alpha^\delta = x_0 - (A^*A + \alpha G^{-2s} )^{-1} A^* (Ax_0 - y^\delta
)$ and $\alpha = \alpha_D$ be the regularization parameter that
obeys the discrepancy principle $\| A x_\alpha^\delta - y^\delta \|
= C \delta$. For solving this nonlinear equation, Newton's method
applied to the equivalent equation
\begin{equation}\label{f52}
g(r) = \| A x_{1/r}^\delta - y^\delta \|^{-1} - (C\delta)^{-1} = 0
\end{equation}
is studied in \cite{lpst10} which results in the iteration
\begin{equation}\label{f53}
r_{k+1} = r_k - \frac{\| A x_{1/r_k}^\delta - y^\delta \|^{-1} -
(C\delta)^{-1}}{r_k^{-3} \left ( v_{1/r_k}^\delta , G^{-2s} (
x_{1/r_k}^\delta - x_0) \right ) \, \| A x_{1/r_k}^\delta - y^\delta
\|^{-3} }
\end{equation}
where $v_{1/r}^\delta $ is given by $v_{1/r}^\delta = (A^*A + r^{-1}
G^{-2s} )^{-1} G^{-2s} (x_{1/r}^\delta - x_0)$. From \cite[Theorem
3.5]{lpst10} we know that the iteration (\ref{f53}) possesses the
following properties:
\begin{enumerate}
\item[(i)] The sequence $(r_k)$ converges globally and monotonically from the
left to $r_D$ for any starting values $0 \le r_0 < r_D:=
1/\alpha_D$.
\item[(ii)] The speed of convergence is locally quadratic.
\end{enumerate}
For $r \to 0$, the both limit relations
\[
\lim_{r \to 0+0} x_{1/r}^\delta = x_0 \enspace \mbox{and} \enspace
\lim_{r \to 0+0} r_k^{-3} \left ( v_{1/r}^\delta , G^{-2s} (
x_{1/r}^\delta - x_0) \right ) = \| G^s A^* ( y^\delta - A x_0) \|^2
\]
are valid. We execute one iteration step of the iteration
(\ref{f53}) with starting value $r_0 = 0$ and obtain due to the
above limit relations that
\[
r_1 = \frac{\left ( \| y^\delta- A x_0 \| - C \delta \right ) \|
y^\delta - A x_0 \|^2 }{C \delta \, \| G^s A^* ( y^\delta - A x_0)
\|^2 } \, .
\]
Due to the above property (i) we have $r_1 < r_D$. Since $r$ and
$\alpha$ are related by $\alpha = 1/r$ we obtain (\ref{f51}).
\end{proof}

Based on the Newton iteration (\ref{f53}) we propose following
strategy for effectively realizing the discrepancy principle
(\ref{f43}) with a little number $n$ of iteration steps.

\medskip

\hrule \hrule  \vspace{1ex} {\parindent 0pt \bf Algorithm 1} $\,$
Global convergent Newton iteration for rule (\ref{f43}) \\[-1.4ex]
\hrule \vspace{1.2ex}
\begin{itemize}
\item[1: ] \hspace{-0.2cm} Start with initial data
$y^\delta$, $A$, $G$, $s$,  $\delta$, $C:=1.1$ and $x_0$.
\\[-2ex]
\item[2: ] \hspace{-0.2cm} {\bf if} $\, \| A x_0 - y^\delta \| > C \delta\,$ {\bf then}  \\[-2ex]
\item[3: ] \hspace{0.2cm} Compute $\alpha$ by the right hand side of (\ref{f51}) with $C=1$.
\\[-2ex]
\item[4: ] \hspace{0.2cm} Compute $x:= x_0 - (A^*A + \alpha G^{-2s} )^{-1} A^* (A x_0 - y^\delta)$
and set $n:=1$. \\[-2ex]
\item[5: ] \hspace{0.2cm} {\bf while} $\,\| A x - y^\delta \| > C \delta\,$ {\bf do}   \\[-2ex]
\item[6: ] \hspace{0.6cm} Compute $v:= (A^*A + \alpha G^{-2s} )^{-1} G^{-2s} (x-x_0)$.   \\[-1ex]
\item[7: ] \hspace{0.6cm} Update
$\displaystyle{r:= \frac{1}{\alpha} - \frac{\| Ax - y^\delta\|^{-1} - \delta^{-1}}
{\alpha^3 \left ( v, G^{-2s} (x-x_0) \right ) \, \| Ax-y^\delta\|^{-3}} }$, $n:=n+1$, \\[-0ex]
\item[8: ] \hspace{0.6cm} $x_0:=x$, $\alpha:=1/r$,
$x:= x_0 - (A^*A + \alpha G^{-2s} )^{-1} A^* (A x_0 - y^\delta)$.  \\[-2ex]
\item[9: ] \hspace{0.2cm} {\bf end while} \\[-2ex]
\item[10: ] \hspace{-0.2cm} {\bf end if} \\[-1ex]
\hrule
\end{itemize}

\medskip

For discussing some properties of Algorithm 1, we will work with the
notation
\[
x_k^\delta (\alpha) := x_{k-1}^\delta - (A^*A + \alpha B^{2s} )^{-1}
A^* (
 A x_{k-1}^\delta - y^\delta ), \quad k=1,2,...,
\]
that indicates the dependence of $x_k^\delta$ defined by (\ref{f12})
on the parameter $\alpha$. We start by some monotonicity property of
the sequence $(\alpha_k)_{k=1}^n$ in the iteration (\ref{f12}).
\begin{proposition}\label{p52}
The regularized solutions $x_k^\delta$, $k=1, \dots ,n$, obtained by
Algorithm 1 have the form (\ref{f12}). The related sequence
$(\alpha_k)_{k=1}^n$ is strictly monotonically decreasing.
\end{proposition}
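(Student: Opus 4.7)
The first assertion, that each $x_k^\delta$ has the form (\ref{f12}), I would establish by direct inspection of Algorithm~1. Both line~4 and line~8 produce the new iterate via
\[
x := x_0 - (A^*A + \alpha G^{-2s})^{-1} A^*(A x_0 - y^\delta),
\]
after which line~8 promotes the current $x$ to the role of $x_0$ for the next outer step. Because $B = G^{-1}$ gives $B^{2s} = G^{-2s}$, identifying $\alpha$ with $\alpha_k$ and the preceding $x_0$ with $x_{k-1}^\delta$ reproduces the recursion (\ref{f12}) for $k = 1, \ldots, n$.

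For strict monotonicity of $(\alpha_k)$ the plan is to work directly from the Newton update in line~7, rewriting it in the form
\[
r - \frac{1}{\alpha} \;=\; \frac{\delta^{-1} - \|A x - y^\delta\|^{-1}}{\alpha^3 \, \bigl(v,\, G^{-2s}(x - x_0)\bigr) \, \|A x - y^\delta\|^{-3}},
\]
and then analysing the sign of the right-hand side. The while-loop guard $\|A x - y^\delta\| > C\delta \ge \delta$ immediately makes the numerator strictly positive. In the denominator, the factors $\alpha^3$ and $\|A x - y^\delta\|^{-3}$ are clearly positive, and writing $u := G^{-2s}(x - x_0)$ the remaining factor
\[
\bigl(v,\, G^{-2s}(x - x_0)\bigr) = \bigl((A^*A + \alpha G^{-2s})^{-1} u,\, u\bigr)
\]
is a quadratic form with respect to the strictly positive definite operator $(A^*A + \alpha G^{-2s})^{-1}$, hence non-negative and vanishing precisely when $u = 0$.

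The only delicate point is therefore to rule out $u = 0$, i.e.\ $x = x_0$; this will be the main obstacle. From $x - x_0 = -(A^*A + \alpha G^{-2s})^{-1} A^*(A x_0 - y^\delta)$ the equality $x = x_0$ would force $A^*(A x_0 - y^\delta) = 0$, but then every further implicit step would reproduce the same iterate and the same discrepancy, so the while loop could not continue past the current step. Since we are told that Algorithm~1 produces the sequence $x_1^\delta, \ldots, x_n^\delta$, this degenerate situation is excluded whenever another iteration actually occurs. Consequently the denominator above is strictly positive, the identity gives $r > 1/\alpha$, and therefore $\alpha_{k+1} = 1/r < \alpha_k$, establishing the strict monotonic decrease of $(\alpha_k)_{k=1}^n$.
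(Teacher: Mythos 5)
Your proof is correct and follows essentially the same route as the paper: both reduce the monotonicity claim to the observation that the Newton correction in line~7 is positive, because the while-loop guard makes the numerator $\|Ax-y^\delta\|^{-1}-\delta^{-1}$ negative while the denominator is positive, so $r$ increases and $\alpha=1/r$ strictly decreases. The only difference is one of packaging: the paper obtains $r_k>r_{k-1}$ by citing from \cite{lpst10} that $g(r)=\|Ax_{k-1}^\delta(1/r)-y^\delta\|^{-1}-\delta^{-1}$ is increasing and satisfies $g(r_{k-1})<0$, whereas you verify the positivity of the denominator directly as the quadratic form $\bigl((A^*A+\alpha G^{-2s})^{-1}u,u\bigr)$ with $u=G^{-2s}(x-x_0)$ and explicitly rule out the degenerate case $u=0$, a point the paper leaves implicit.
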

\begin{proof}
In steps 3 and 4 of Algorithm 1, $\alpha_1$ and $x_1^\delta =
x_1^\delta (\alpha_1)$ are computed. Then, the while loop (steps 5
-- 9 of Algorithm 1) is executed $n-1$ times to obtain $\alpha_k$
and $x_k^\delta = x_k^\delta (\alpha_k)$ for $k=2,...,n$. The
parameter $\alpha = \alpha_{k} := 1/r_{k}$ (see step 7 of Algorithm
1) is obtained by performing one Newton step for solving the
nonlinear equation
\[
g(r)= \| A x_{k-1}^\delta (1/r) - y^\delta \|^{-1} - \delta^{-1} = 0
\]
with starting value $r_{k-1} = 1/\alpha_{k-1}$.
It can be shown (compare \cite{lpst10}) that the function $g$
possesses following properties:
\begin{enumerate}
\item[(i)] There hold the two limit relations
\[
\lim_{r \to 0 + 0} g(r) = \| A x_{k-2}^\delta - y^\delta \|^{-1} -
\delta^{-1} < 0 \quad \mbox{and} \quad \lim_{r \to \infty} g(r) = +
\infty .
\]
\item[(ii)] The function $g: \mathbb R^+ \to \mathbb R$
is monotonically increasing and concave.
\end{enumerate}
From these properties and $g(r_{k-1}) < 0$ we conclude that $r_k >
r_{k-1}$. It follows that $\alpha_{k} < \alpha_{k-1}$ for all $k=2,
\dots , n$.
\end{proof}

For discussing convergence properties of Algorithm 1 we consider
Tikhonov regularization
\begin{equation}\label{f54}
x_1^\delta (\beta_m ): = x_0 - (A^*A + \beta_m G^{-2s} )^{-1} A^*
(Ax_0 - y^\delta )
\end{equation}
and assume
\begin{enumerate}
\item[(i)] $\beta_k = r_k^{-1}$, $k=2, \dots, m$, is obtained by the iteration
\begin{equation}\label{f55}
r_{k} = r_{k-1} - \frac{\| A x_{1/r_{k-1}}^\delta - y^\delta \|^{-1}
- \delta^{-1}}{r_{k-1}^{-3} \left ( v_{1/r_{k-1}}^\delta , G^{-2s} (
x_{1/r_{k-1}}^\delta - x_0) \right ) \, \| A x_{1/r_{k-1}}^\delta -
y^\delta \|^{-3} },
\end{equation}
where  $v_{1/r}^\delta $ is given by $v_{1/r}^\delta = (A^*A +
r^{-1} G^{-2s} )^{-1} G^{-2s} (x_{1/r}^\delta - x_0)$ and
$x_{1/r}^\delta$ is given by $x_{1/r}^\delta = x_1^\delta (1/r)$,
\item[(ii)] $r_1$ is chosen as $\displaystyle{r_1 = \frac{ ( \| y^\delta- A x_0 \| - \delta ) \|
y^\delta - A x_0 \|^2 }{  \delta \, \| G^s A^* ( y^\delta - A x_0)
\|^2  }}$
and the iteration (\ref{f55}) is stopped with the first integer $m$
for which, with $C:=1.1$,
\begin{equation}\label{f56}
\| A x_1^\delta (\beta_m) - y^\delta \| \le C \delta < \| A
x_1^\delta (\beta_k) - y^\delta \| , \quad 0 \le k < m .
\end{equation}
\end{enumerate}

From \cite{lpst10} we know that the iteration (\ref{f55}) converges
globally and monotonically from the left to the solution of the
equation $ g(r) = \| A x_1^\delta (1/r) - y^\delta \|^{-1} -
\delta^{-1} = 0$, and that in the vicinity of the solution we have
quadratic speed of convergence. It follows that by the stopping rule
(\ref{f56}) a finite number $m$ of iteration steps is defined. Our
next proposition tells us that Algorithm 1 is not slower than the
iteration (\ref{f55}) with stopping rule (\ref{f56}).

\begin{proposition}\label{p53}
Let $m$ be the number of iterations of method (\ref{f55}) with
stopping rule (\ref{f56}). Then, $n \le m$, where $n$ is the number
of iterations of Algorithm 1.
\end{proposition}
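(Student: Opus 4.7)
My plan is to establish the discrepancy inequality
\[
\|A x_k^\delta - y^\delta\| \le \|A x_1^\delta(\beta_k) - y^\delta\|, \qquad k = 1, \ldots, m,
\]
by induction on $k$; evaluating at $k = m$ and invoking the stopping rule (\ref{f56}) then gives $\|A x_m^\delta - y^\delta\| \le C\delta$, so Algorithm~1 has already satisfied its stopping criterion by step $m$, and hence $n \le m$.

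Using $T = AG^s$ and the identity $I - T(T^*T + \alpha I)^{-1} T^* = \alpha (TT^* + \alpha I)^{-1}$, both residuals admit the spectral representation (cf.\ (\ref{f45}))
\[
y^\delta - A x_k^\delta \;=\; \prod_{i=1}^{k} \alpha_i (TT^* + \alpha_i I)^{-1} (y^\delta - A x_0), \quad
y^\delta - A x_1^\delta(\beta_k) \;=\; \beta_k (TT^* + \beta_k I)^{-1} (y^\delta - A x_0),
\]
so the inequality reduces to the pointwise spectral bound $\prod_{i=1}^{k} \alpha_i/(\lambda + \alpha_i) \le \beta_k/(\lambda + \beta_k)$ for every $\lambda \ge 0$. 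Applying the elementary inequality $\prod_{i=1}^{k}(1 + x_i) \ge 1 + \sum_{i=1}^{k} x_i$ with $x_i = \lambda/\alpha_i$, this is in turn implied by $\sigma_k \ge 1/\beta_k$, where $\sigma_k$ is the regularization parameter of Algorithm~1 from (\ref{f13}). The base case $\sigma_1 = 1/\alpha_1 = 1/\beta_1$ is immediate, since $\alpha_1$ and $\beta_1$ are defined by the same formula (Proposition~\ref{p51} with $C = 1$).

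The inductive step then reduces to the key inequality
\[
\tfrac{1}{\beta_{k+1}} \;\le\; \tfrac{1}{\beta_k} + \tfrac{1}{\alpha_{k+1}},
\]
for if this holds then $\sigma_{k+1} = \sigma_k + 1/\alpha_{k+1} \ge 1/\beta_k + 1/\alpha_{k+1} \ge 1/\beta_{k+1}$. Proving the key inequality is the main obstacle: $1/\beta_{k+1}$ is obtained by one Newton step, starting from $1/\beta_k$, on the function $\tilde h(r) := \|A x_1^\delta(1/r) - y^\delta\|^{-1} - \delta^{-1}$, whereas $1/\alpha_{k+1}$ is obtained by one Newton step, starting from $1/\alpha_k$, on the \emph{different} function obtained by replacing $x_0$ with $x_{k-1}^\delta$ throughout the definition of $\tilde h$. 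I plan to derive the key inequality via a tangent-line comparison of the two Newton steps, using the concavity and strict monotonicity of both target functions (as in the proof of Proposition~\ref{p52}), the fact that Algorithm~1's target function pointwise dominates $\tilde h$ (because the residual $y^\delta - A x_{k-1}^\delta$ is spectrally dominated by $y^\delta - A x_0$), and the global convergence-from-the-left properties of the Newton iteration established in~\cite{lpst10}.
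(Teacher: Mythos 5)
Your global strategy coincides with the paper's: show that the discrepancy of Algorithm~1 at step $k$ is dominated by the discrepancy of the Tikhonov--Newton iteration (\ref{f55}) at step $k$, then invoke the stopping rule (\ref{f56}). Your reduction of that comparison to the scalar inequality $\sigma_k\ge 1/\beta_k$, via $\prod_{i\le k}(1+\lambda/\alpha_i)\ge 1+\lambda\sigma_k$, is correct and rather elegant, and the cases $k=1,2$ go through because the first Newton step of Algorithm~1 is literally the first step of (\ref{f55}), so $\alpha_2=\beta_2$.

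The problem is that everything then hinges on the ``key inequality'' $1/\beta_{k+1}\le 1/\beta_k+1/\alpha_{k+1}$, and for this you give only a plan, not a proof. This is exactly where the difficulty sits: for $k\ge 2$ the two Newton steps act on different functions \emph{and} start from different points ($1/\beta_k$ versus $1/\alpha_k$, which coincide only for $k\le 2$), so concavity and monotonicity alone do not let you compare the increments $-\tilde h(1/\beta_k)/\tilde h'(1/\beta_k)$ and $-g_k(1/\alpha_k)/g_k'(1/\alpha_k)$ without controlling the derivatives of two different functions at two different arguments. Moreover, the one structural fact you propose to exploit --- that Algorithm~1's residual is spectrally dominated by the Tikhonov residual, so its target function dominates $\tilde h$ pointwise --- pushes in the wrong direction: it makes $|g_k|$ \emph{smaller} to the left of the roots and therefore tends to make the Newton increment, and hence $1/\alpha_{k+1}$, smaller, whereas the key inequality needs a \emph{lower} bound on $1/\alpha_{k+1}$. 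The paper sidesteps all of this by never comparing the parameters quantitatively beyond $\alpha_2=\beta_2$: it compares the residual operators directly, writing $y^\delta-Ax_k^\delta(\alpha_k)$ as the Tikhonov residual multiplied by additional contraction factors $\alpha_i(TT^*+\alpha_iI)^{-1}$ as in (\ref{f57})--(\ref{f58}). Until the key inequality is actually established (or replaced by such an operator-level comparison), your argument has a genuine hole at its central step.
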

\begin{proof}
Assume that $\alpha_1$ and $x_1^\delta (\alpha_1)$ in steps 3 and 4
of Algorithm 1 are computed, which coincide with $\beta_1$ and
$x_1^\delta (\beta_1)$ of the iteration (\ref{f55}). Then, in the
first iteration step of the while-loop (steps 5 -- 9 of Algorithm 1)
we obtain $\alpha_2$ and
\[
x_2^\delta (\alpha_2) = x_1^\delta (\alpha_1) - (A^*A + \alpha_2
G^{-2s} )^{-1} A^* (A x_1^\delta (\alpha_1) - y^\delta) .
\]
For $x_2^\delta (\alpha_2)$ computed in this way we have
\begin{equation}\label{f57}
y^\delta - A x_2^\delta (\alpha_2)= \alpha_2 (TT^* + \alpha_2 I
)^{-1} \alpha_1 (TT^* + \alpha_1 I)^{-1} (y^\delta - A x_0) .
\end{equation}
On the other hand, from the iteration (\ref{f55}) we obtain after
the first step the regularization parameter $\beta_2 = \alpha_2$ and
the regularized solution $x_1^\delta (\beta_2)$ which obeys
\begin{equation}\label{f58}
y^\delta - A x_1^\delta (\beta_2) = \beta_2 (TT^* + \beta_2 )^{-1}
(y^\delta - A x_0) .
\end{equation}
Comparing both identities (\ref{f57}) and (\ref{f58}) and observing
that $\alpha_2 = \beta_2$ we obtain that
$\|y^\delta - A x_2^\delta (\alpha_2) \| < \| y^\delta - A
x_1^\delta (\beta_2) \| $.
In a similar way we obtain that
\[
\|y^\delta - A x_k^\delta (\alpha_k) \| < \| y^\delta - A x_1^\delta
(\beta_k) \| , \quad k = 3, \dots , n ,
\]
where $(\alpha_k)$ is the sequence generated by Algorithm 1 and
$(\beta_k)$ is the sequence generated by (\ref{f55}). From this
estimate we obtain that Algorithm 1 terminates not later than the
iteration (\ref{f55}) with stopping rule (\ref{f56}).
\end{proof}

After termination of Algorithm 1, different cases can appear:
\begin{enumerate}
\item We have $\delta < \| A x_n^\delta - y^\delta \| \le C \delta$ with $C=1.1$. In this
case, all three Theorems \ref{t45}, \ref{t46} and \ref{t48} apply.
\item We have $\| A x_n^\delta - y^\delta \| =  \delta $.
Then, both Theorems \ref{t46} and \ref{t48} apply.
\item We have $\| A x_n^\delta - y^\delta \| < \delta $. In this case,
Theorem \ref{t46} applies.
\end{enumerate}

Our next proposition tells us that in all three termination cases
(1) -- (3), the additional assumption (\ref{f44}) of Theorem
\ref{t46} is satisfied with some $c < 1$.

\begin{proposition} \label{p54}
The regularized solution $x_n^\delta$ obtained by Algorithm 1 has
the form (\ref{f21}) with some sequence $(\alpha_k)_{k=1}^n$ that
obeys assumption (\ref{f44}) with $c<1$.
\end{proposition}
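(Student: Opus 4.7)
By Proposition \ref{p52} and its proof, each $\alpha_k$ for $k\ge 2$ arises from one Newton step starting at $r_{k-1}=1/\alpha_{k-1}$ applied to the concave, strictly increasing function
\[
g_{k-1}(r) = \|A\tilde x_{k-2}(1/r) - y^\delta\|^{-1} - \delta^{-1},
\]
where $\tilde x_{k-2}(\alpha) = x_{k-2}^\delta - (A^*A+\alpha G^{-2s})^{-1}A^*(Ax_{k-2}^\delta - y^\delta)$. My plan is to exploit this Newton-step structure explicitly and extract a quantitative upper bound on $r_n = 1/\alpha_n$ of the desired form.

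First, I would use the formula from step 7 of Algorithm 1, namely
\[
r_k - r_{k-1} \;=\; \frac{(d_{k-1}-\delta)\, d_{k-1}^{2}}{\delta\,\alpha_{k-1}^{3}(v_{k-1},w_{k-1})},
\]
with $d_{k-1}=\|Ax_{k-1}^\delta-y^\delta\|$, $w_{k-1}=G^{-2s}(x_{k-1}^\delta-x_{k-2}^\delta)$ and $v_{k-1}=(A^*A+\alpha_{k-1}G^{-2s})^{-1}w_{k-1}$. Using the identity $(A^*A+\alpha G^{-2s})^{-1}=G^s(T^*T+\alpha I)^{-1}G^s$, the representation $x_{k-1}^\delta-x_{k-2}^\delta = G^s(T^*T+\alpha_{k-1}I)^{-1}T^*\eta_{k-2}$ with $\eta_{k-2}:=y^\delta-Ax_{k-2}^\delta$, and the intertwining $T(T^*T+\alpha I)^{-m}T^*=TT^*(TT^*+\alpha I)^{-m}$, I would rewrite both $d_{k-1}^2$ and $\alpha_{k-1}^3(v_{k-1},w_{k-1})$ as spectral integrals against the measure $d\|E_\mu\eta_{k-2}\|^2$ of $TT^*$:
\[
d_{k-1}^2 = \alpha_{k-1}^2\!\int (\mu+\alpha_{k-1})^{-2}\,d\|E_\mu\eta_{k-2}\|^2, \qquad
\alpha_{k-1}^3(v_{k-1},w_{k-1}) = \alpha_{k-1}^3\!\int \mu(\mu+\alpha_{k-1})^{-3}\,d\|E_\mu\eta_{k-2}\|^2.
\]

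Second, I would combine the concavity of $g_{k-1}$ (which forces $r_k\le r_{k-1}^*$, the root of $g_{k-1}$) with a Cauchy--Schwarz-type spectral inequality applied to these integrals. The root $r_{k-1}^*$ satisfies $\|\alpha_{k-1}^*(TT^*+\alpha_{k-1}^*I)^{-1}\eta_{k-2}\|=\delta$, so it is tied to the previous residual $d_{k-2}=\|\eta_{k-2}\|$ and can be bounded by a suitable functional of $r_{k-1}$ and $r_{k-2}$. Invoking also the stopping rule ($d_{n-1}>C\delta$ with $C>1$) together with the monotone decrease of the residuals $d_k$ I would obtain a bound of the form $r_k \le \kappa\,(r_{k-1}+r_{k-2})$ or, more usefully after summation, $r_n \le c\,\sigma_{n-1}$ with some $c$ depending only on $C$ and the problem constants.

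Third, to achieve $c<1$ I would refine the spectral bound above using the fact that $\sigma_{n-1}$ cumulates the contributions of all prior Newton steps and that Newton's method on a concave increasing function never overshoots the root. Since $r_k\le r_{k-1}^*$ for each $k$, and consecutive roots $r_{k-1}^*$ are controlled by the strictly decreasing residuals, a telescoping-type argument should squeeze $r_n$ below the accumulated sum $\sigma_{n-1}$.

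The main obstacle is precisely this last step: showing $c<1$ rather than merely $c$ bounded. A naive one-step bound on the Newton increment yields only $r_n\le (1+c_0)r_{n-1}\le (1+c_0)\sigma_{n-1}$, which gives $c\ge 1$. The genuine saving must come from combining the ``no overshoot'' property with the recursive nature of the Newton iterates (each $r_{k-1}^*$ is bounded in terms of the earlier iterates), so that the cumulative sum $\sigma_{n-1}$ grows faster than the final term $r_n$. I expect this to be the most delicate part of the proof; everything else is bookkeeping with the explicit spectral formulas above.
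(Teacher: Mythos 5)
Your plan cannot succeed as stated, and the reason is visible in your own closing paragraph: for the sequence $(\alpha_k)$ in the order Algorithm~1 actually generates it, the bound $1/\alpha_n\le c\,\sigma_{n-1}$ with $c<1$ is simply \emph{false} in general. Already for $n=2$ one has $\sigma_1=r_1$ and, since the Newton step on the increasing concave $g$ with $g(r_1)<0$ moves strictly to the right, $r_2>r_1=\sigma_1$, forcing $c>1$. No refinement of the spectral estimates for the Newton increment can repair this. The key you are missing is the word ``some'' in the statement: the proposition does not claim (\ref{f44}) for the generated order, but for \emph{a} sequence producing the same iterate. By (\ref{f21}) and (\ref{f45}) the final iterate $x_n^\delta$ depends only on the residual polynomial $\prod_{k=1}^n \alpha_k(\lambda+\alpha_k)^{-1}$, hence only on the multiset $\{\alpha_1,\dots,\alpha_n\}$ and not on the order in which the $\alpha_k$ are applied. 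The paper's proof therefore swaps the last two parameters: after relabeling, the final parameter is the \emph{larger} one $\alpha_{n-1}^{\rm old}$, so the new $1/\alpha_n$ equals $r_{n-1}<r_n\le \sigma_{n-1}^{\rm new}$, and (\ref{f44}) with $c<1$ is immediate --- no quantitative control of the Newton step is needed at all.

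The only thing that does require checking after the swap is that the reordered sequence is still admissible for rule (\ref{f43}), i.e.\ that the intermediate iterate $x_{n-1}^\delta(\alpha_n)$ (obtained by applying the smaller parameter first) still has discrepancy above the threshold. This is where the ``no overshoot'' property you invoke is actually used in the paper: since $g$ is increasing and concave and $g(r_{n-1})<0$, the Newton iterate $r_n$ does not pass the root, whence $\|Ax_{n-1}^\delta(\alpha_n)-y^\delta\|>\delta$. So the two ingredients you identified (concavity of $g$ and the product structure of the residual) are the right ones, but they must be combined through the permutation invariance of $x_n^\delta$ rather than through a telescoping bound on $r_n$; I would restructure the argument around that observation.
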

\begin{proof}
Consider the final iteration of the while-loop (steps 5 -- 9 of
Algorithm 1). Starting from $x = x_{n-1}^\delta (\alpha_{n-1} )$
with $\| Ax - y^\delta\|> C \delta$, $\alpha_n:=1/r_n$ is obtained
by performing one Newton step for solving the nonlinear equation
\[
g(r)= \| A x_{n-1}^\delta (1/r) - y^\delta \|^{-1} - \delta^{-1} = 0
\]
with starting value $r_{n-1} = 1/\alpha_{n-1}$. As a result, we
obtain some $\alpha_n < \alpha_{n-1}$, see Proposition \ref{p52},
and the final regularized solution $x_n^\delta$ is obtained by
\[
x_n^\delta (\alpha_n) = x_{n-1}^\delta (\alpha_{n-1}) - (A^*A +
\alpha_n G^{-2s} )^{-1} A^* (A x_{n-1}^\delta (\alpha_{n-1}) -
y^\delta) .
\]
Some formal computations show that $x_n^\delta (\alpha_n)$ can be
rewritten as
\[
x_n^\delta (\alpha_n) = x_{n-1}^\delta (\alpha_{n}) - (A^*A +
\alpha_{n-1} G^{-2s} )^{-1} A^* (A x_{n-1}^\delta (\alpha_{n}) -
y^\delta) .
\]
Since the function $g$ is monotonically increasing and concave and
since $g(r_{n-1}) < 0$
we conclude that the element $x_{n-1}^\delta (\alpha_{n})$ obeys
$ \| A x_{n-1}^\delta (\alpha_{n}) - y^\delta \| > \delta $.
It follows that the final two parameters $\alpha_{n-1}$ and
$\alpha_n$ in the iteration (\ref{f12}) can be interchanged such
that we have $\alpha_n > \alpha_{n-1}$. This yields (\ref{f44}) with
some constant $c < 1$.
\end{proof}

\section{Numerical experiments}
\setcounter{equation}{0}
In this section we perform numerical experiments for computing
regularized solutions by Algorithm 1. We consider Fredholm integral
equations
\begin{equation}\label{f61}
[A x] (s):= \int_0^1 K (s,t) x(t) \, \mbox{d} t = y(s) , \enspace 0
\le s \le 1 , \enspace A: L^2 (0,1) \to L^2 (0,1)
\end{equation}
and differential operators $B: D \subset L^2 (0,1) \to L^2 (0,1)$ of
first order defined by
\begin{equation}\label{f62}
Bx = \sum_{k=1}^\infty k (x,e_k) e_k  \quad \mbox{with} \quad e_k
(t) = \sqrt{2} \sin \left ( k \pi t  \right ) .
\end{equation}

\begin{example}\label{e61} Our test example ({\it deriv2} from \cite{ha94}) is
(\ref {f61}) with kernel function
\[
K(s,t) = \left \{
\begin{array}{ll}
s (1-t) \quad & \mbox{for $ s \le t$} \\[1ex]
t (1-s) \quad & \mbox{for $ s \ge t$} .
\end{array}
\right.
\]
For this kernel function, Assumption A1 is satisfied with
$m=M=\pi^{-2}$ and $\varrho (t) = t^2$. We consider three
subexamples in which the right hand sides $y(s)$, the corresponding
solutions $x^\dagger (t)$ and the maximal smoothness parameters
$p_0$ for which Assumption A2 with $x_0 = 0$ holds true for all $p
\in (0, p_0)$, are given by
\begin{equation*}
\begin{array}{llll}
\mbox{(i)} & y(s) = - \frac{1}{4 \pi^2} \sin 2 \pi s , & x^\dagger
(t) = \sin 2 \pi t, & p_0 = \infty,
\\[1ex]
\mbox{(ii)} & y(s) = \frac{s}{3} \left ( 1 - 2s^2 + s^3 \right ), &
x^\dagger (t) = 4t (1-t), & p_0 = \frac{5}{2},
\\[1ex]
\mbox{(iii)} & y(s) = \frac{s}{6} \left ( 1 - s^2 \right ), &
x^\dagger (t) = t, & p_0 = \frac{1}{2}.
\end{array}
\end{equation*}
\end{example}

The discretization of (\ref{f61}) has been done by Galerkin
approximation as outlined, e.\,g., in \cite{ha94,lpst10},
guaranteeing that $\| x^\dagger \|_2 \approx \| x^\dagger (t)
\|_{L^2 (0,1)}$ and $\| y\|_2 \approx \| y(s) \|_{L^2 (0,1)}$ holds.
As a discrete approximation of the first order differential operator
(\ref{f62}) we use the $(m,m)$ -- matrix
\begin{eqnarray}\label{f63}
B:= B_2^{1/2} \quad \mbox{with} \quad B_2 = \frac{(m+1)^2}{\pi^2}
\left(
\begin{array}{rrrr}
2 & -1 & &  \\
-1 & \ddots  & \ddots & \\
& \ddots & \ddots & -1 \\
&  & - 1 & 2
\end{array}
\right)  ,
\end{eqnarray}
compare \cite{lpst10}. For modeling noise in the discretized right
hand side $y \in \mathbb R^m$, for given nonnegative $\sigma$ we
compute
\[
y^\delta = y + \sigma \, \frac{\| y\|_2}{\|e\|_2} \, e ,
\]
where $e= (e_i)$ is a random vector with $e_i \sim {\mathcal N}
(0,1)$. In this way of modeling noise we guarantee that for the
relative error we have $\| y - y^\delta \|_2 / \| y\|_2 = \sigma$.
The noise level $\delta$ is then given by
$ \delta = \sigma \, \| y \|_2$.
Tables 1 -- 3 show our numerical results with $x_0 = 0$, where the
letter codes in the leftmost column refer to following three
iteration methods:
\begin{itemize}
\item {\bf (TI/DP):} This is the method of Tikhonov regularization (\ref{f54}) with $x_0 = 0$ and $s=1$,
where the regularization parameter obeys (\ref{f56}) and is obtained
by the iteration (\ref{f55}) which converges globally and locally
quadratically.
\item {\bf (IIM/A1):} This is the implicit iteration method
(\ref{f12}) with  $x_0 = 0$ and $s=1$, where the sequence
$(\alpha_k)_{k=1}^n$ is obtained by Algorithm 1.
\item {\bf (IIM/GS):} This is the implicit iteration method
(\ref{f12}) with  $x_0 = 0$ and $s=1$, where the sequence
$(\alpha_k)_{k=1}^n$ is the geometric sequence $(q^{k-1}
\alpha_1)_{k=1}^n$  with $q = \frac{1}{2}$ as proposed in
\cite{hg98} and stopping rule (\ref{f43}) with $C=1.1$.
\end{itemize}

For all three iteration methods our tables contain
\begin{enumerate}
\item[(i)] the number $n$ of required iterations,
\item[(ii)] the final regularization parameter $\alpha_n$,
\item[(iii)] the discrepancy $d_n := \|A x_1^\delta (\alpha_n) - y^\delta\|_2$ of the final approximation
for the iteration method (TI/DP), and the discrepancy $d_n := \|A
x_n^\delta - y^\delta\|_2$ of the final approximation for the
iteration methods (IIM/A1) and (IIM/GS),
\item[(iv)] the error $e_n := \|x_1^\delta (\alpha_n) - x^\dagger\|_2$ of the final approximation
for the iteration method (TI/DP), and the error $e_n := \|x_n^\delta
- x^\dagger\|_2$ of the final approximation for the iteration
methods (IIM/A1) and (IIM/GS).
\end{enumerate}

In our experiments, all three iteration methods have been started
first with
\begin{equation}\label{f64}
\alpha_1 = \delta \, \frac{( B^{-2} A^* y^\delta, A^* y^\delta )
}{\left ( \| y^\delta\|_2 - \delta \right ) \| y^\delta\|_2^2 } ,
\end{equation}
compare (\ref{f51}), and second with $\alpha_1 = 1$ as done in
\cite{hg98}. In order to keep the discretization error small, we
have used the dimension number $m=400$ in all computations.

\begin{table}[ht]
\begin{tabular}{ccccccc}
\hline\\[-0.52cm]
\hline \\[-0.35cm]
$\quad \quad {\rm Method} \quad \quad$ & $\quad n \quad$ & $\quad
\alpha_n \quad$ & $\quad d_n \quad$ & $\quad e_n \quad$  \\[0.1cm]
\hline \\[-0.35cm]
(TI/DP) & $\quad 3 \quad$ & $\quad 5.54$ E$-7\quad$ & $\quad 1.88$ E$-4\quad$ & $\quad 3.79$ E$-3\quad$ \\
(IIM/A1) & $\quad 2 \quad$ & $\quad 8.85$ E$-7\quad$ & $\quad 1.78$ E$-4\quad$ & $\quad 2.94$ E$-3\quad$ \\
(IIM/GS) & $\quad 2 \quad$ & $\quad 8.10$ E$-7\quad$ & $1.78$ E$-4$ & $\quad 2.95$ E$-3\quad$ \\[0.1cm]
\hline\\[-0.35cm]
%
%
(TI/DP) & $\quad 4 \quad$ & $\quad 5.54$ E$-7\quad$ & $\quad 1.88$ E$-4\quad$ & $\quad 3.79$ E$-3\quad$ \\
(IIM/A1) & $\quad 3 \quad$ & $\quad 8.85$ E$-7\quad$ & $\quad 1.78$ E$-4\quad$ & $\quad 2.94$ E$-3\quad$ \\
(IIM/GS) & $\quad 17 \quad$ & $\quad 1.52$ E$-5\quad$ & $1.78$ E$-4$ & $\quad 2.83$ E$-3\quad$ \\[0.1cm]
\hline\\[-0.52cm]
\hline
\end{tabular}
\caption{Example \ref{e61} (i) with $\sigma = 0.01$ $(\delta =
\sigma \| y\|_2 \approx 1.79$ E$-4)$. {\it Top}: $\alpha_1$ from
(\ref{f64}), {\it Down}: $\alpha_1 = 1$.} \label{tab1}
\end{table}

\begin{table}[ht]
\begin{tabular}{ccccccc}
\hline\\[-0.52cm]
\hline \\[-0.35cm]
$\quad \quad {\rm Method} \quad \quad$ & $\quad n \quad$ & $\quad
\alpha_n \quad$ & $\quad d_n \quad$ & $\quad e_n \quad$  \\[0.1cm]
\hline \\[-0.35cm]
(TI/DP) & $\quad 3 \quad$ & $\quad 2.75$ E$-5\quad$ & $\quad 7.97$ E$-4\quad$ & $\quad 1.62$ E$-2\quad$ \\
(IIM/A1) & $\quad 2 \quad$ & $\quad 5.14$ E$-5\quad$ & $\quad 7.75$ E$-4\quad$ & $\quad 1.70$ E$-2\quad$ \\
(IIM/GS) & $\quad 2 \quad$ & $\quad 5.18$ E$-5\quad$ & $7.75$ E$-4$ & $\quad 1.70$ E$-2\quad$ \\[0.1cm]
\hline\\[-0.35cm]
%
%
(TI/DP) & $\quad 4 \quad$ & $\quad 2.75$ E$-5\quad$ & $\quad 7.97$ E$-4\quad$ & $\quad 1.62$ E$-2\quad$ \\
(IIM/A1) & $\quad 3 \quad$ & $\quad 5.15$ E$-5\quad$ & $\quad 7.75$ E$-4\quad$ & $\quad 1.70$ E$-2\quad$ \\
(IIM/GS) & $\quad 12 \quad$ & $\quad 4.88$ E$-4\quad$ & $8.08$ E$-4$ & $\quad 2.46$ E$-2\quad$ \\[0.1cm]
\hline\\[-0.52cm]
\hline
\end{tabular}
\caption{Example \ref{e61} (ii) with $\sigma = 0.01$ $(\delta =
\sigma \| y\|_2 \approx 7.39$ E$-4)$. {\it Top}: $\alpha_1$ from
(\ref{f64}), {\it Down}: $\alpha_1 = 1$.}  \label{tab2}
\end{table}

\begin{table}[ht]
\begin{tabular}{ccccccc}
\hline\\[-0.52cm]
\hline \\[-0.35cm]
$\quad \quad {\rm Method} \quad \quad$ & $\quad n \quad$ & $\quad
\alpha_n \quad$ & $\quad d_n \quad$ & $\quad e_n \quad$  \\[0.1cm]
\hline \\[-0.35cm]
(TI/DP) & $\quad 6 \quad$ & $\quad 7.62$ E$-8\quad$ & $\quad 4.83$ E$-4\quad$ & $\quad 1.51$ E$-1\quad$ \\
(IIM/A1) & $\quad 5 \quad$ & $\quad 1.24$ E$-7\quad$ & $\quad 4.80$ E$-4\quad$ & $\quad 1.51$ E$-1\quad$ \\
(IIM/GS) & $\quad 9 \quad$ & $\quad 3.98$ E$-7\quad$ & $4.95$ E$-4$ & $\quad 1.62$ E$-1\quad$ \\[0.1cm]
\hline\\[-0.35cm]
(TI/DP) & $\quad 7 \quad$ & $\quad 7.62$ E$-8\quad$ & $\quad 4.83$ E$-4\quad$ & $\quad 1.51$ E$-1\quad$ \\
(IIM/A1) & $\quad 6 \quad$ & $\quad 1.24$ E$-7\quad$ & $\quad 4.80$ E$-4\quad$ & $\quad 1.51$ E$-1\quad$ \\
(IIM/GS) & $\quad 22 \quad$ & $\quad 4.76$ E$-7\quad$ & $5.00$ E$-4$ & $\quad 1.65$ E$-1\quad$ \\[0.1cm]
\hline\\[-0.52cm]
\hline
\end{tabular}
\caption{Example \ref{e61} (iii) with $\sigma = 0.01$ $(\delta =
\sigma \| y\|_2 \approx 4.60$ E$-4)$. {\it Top}: $\alpha_1$ from
(\ref{f64}), {\it Down}: $\alpha_1 = 1$.} \label{tab3}
\end{table}

In our numerical experiments we observed that the accuracy of each
individual regularization method in the three test cases of Examples
\ref{e61} (i) -- (iii) is as predicted by the theory. In Tables
\ref{tab1} -- \ref{tab3} we mainly concentrate on the performance of
the three methods and observe following:
\begin{enumerate}
\item As far as computational expenses are concerned, the iteration
method (IIM/A1) performs best. In fact, this method requires the
smallest number of iterations compared with the other two methods.
\item For the method (IIM/GS), the number of iterations can
considerably be reduced by starting with $\alpha_1$ from (\ref{f64})
instead of starting with $\alpha_1 = 1$. For the other two methods
(TI/DP) and (IIM/A1), the number of iterations differs only by 1 for
the two starting values (\ref{f64}) and $\alpha_1 = 1$,
respectively.
\item In all three iteration methods, the $\alpha$--sequence $(\alpha_k)_{1}^n$
is decreasing. However, the final regularization parameter
$\alpha_n$ is smallest for method (TI/DP). Comparing the
discrepancies $d_k$ for the individual iterations $k=1,2,...$ (which
are not contained in the tables) we observed that, for $k \ge 2$,
$d_k$ in method (IIM/A1) is always smaller than $d_k$ in method
(TI/DP).

\end{enumerate}

\end{document}